\documentclass{amsart}
\usepackage{graphicx} % Required for inserting images
\usepackage{amsmath}
\usepackage{amssymb}
\usepackage{xcolor}
\usepackage{amsthm}
\usepackage{natbib}
\newtheorem{Prop}{Proposition}
\newtheorem{Thm}{Theorem}
\newtheorem{Def}{Definition}
\newtheorem{Lem}{Lemma}

\newtheorem{Rmk}{Remark}
\newtheorem*{proof-of-lemma4}{\rmfamily Proof of Lemma \ref{Lem:4}}
\newtheorem*{proof-of-lemma5}{\rmfamily Proof of Lemma \ref{Lem:5}}

\title[An inverse Coefficient problem for a semilinear wave equation]{An inverse coefficient problem for a semilinear wave equation by the first order linearization}
\author{Yuxiang He and Shuai Lu}
\address{School of Mathematical Sciences, Fudan University, 220 Handan Road, Shanghai 200433, China. Corresponding author: S. Lu}
\email{heyuxiang21@m.fudan.edu.cn (Y. He); slu@fudan.edu.cn (S. Lu) }
\date{\today}

\begin{document}

\maketitle

%\section{Update}

%Jan 25 2024: Reconstruction and stability in 3D situation added.

%July 18 2023: Using $\varepsilon$-expansion to derive the $u_1$ in chapter "Linearization", 

%July 12 2023: Correct the derivation of Born approximation, backpropagation algorithm and implementation, by introducing the initial incoming plane wave as in [Paul Muller, The Theory of Diffraction Tomography, 2016].

%July 9 2023: The first version.

\begin{abstract}
This paper investigates recovery of an unknown coefficient in a semilinear wave equation defined on a bounded, open, and strictly convex domain in \(\mathbb{R}^{1+n}\) with \(n \ge 2\). We demonstrate that the unknown coefficient \(q\) appearing in the semilinear wave equation \(\square u + q u^m = 0\) with Neumann boundary conditions can be reconstructed with H\"older stability from the linearized Neumann-to-Dirichlet (NtD) map. Our approach combines first-order linearization with the Principle of Inclusion-Exclusion (PIE) identity, and employs geometric optics solutions for wave equations in two distinct regimes: the case \(m=2\) with \(q = q(x)\), and the case \(m \ge 3\) with \(q = q(t,x)\). Furthermore, numerical examples illustrate that the unknown coefficient can also be effectively reconstructed using a neural network-based inversion algorithm within the framework of the least squares method.
\end{abstract}

\section{Introduction}
The inverse problem for wave equations represents a long-standing and central area of research within the inverse problems community. Early work in this field primarily addressed inverse problems for linear wave equations. Using the boundary control method, developed in \cite{Matti5,Matti6}, uniqueness results were established for recovering both a coefficient and the underlying Riemannian manifold with zero initial conditions. This method effectively combines wave propagation analysis with controllability theory %\cite{Katchalovetal.2001} 
and also offers an efficient framework for numerical implementation \cite{deHoopetal.2018}.
Several recent studies have further advanced the determination of unknown coefficients from boundary data for the linear wave equation and related equations, including contributions such as \cite{Anderson2004,Matti9,Kianetal.2019,Kurylevetal.2018b,Lassas2018}. %Helinetal.2018,Isozakietal.2017,Krupchyketal.2008,Matti26
Nevertheless, the classical boundary control method remains limited in scope: it applies either to time-independent coefficients or when the coefficients are real analytic in time \cite{Eskin2007}. In contrast, the approach presented in \cite{Alexakis2022,Alexakis2025} relies on a unique continuation property in the exterior region of double null cones. While conceptually related to the boundary control method, this technique generalizes the method to settings that do not require real analyticity in time.
We also note that various stability estimates for inverse coefficient problems for the linear wave equation are investigated in \cite{BaoZhang2014, Busch2026, Feizmohammadietal.2021, Stefanov1989, StefanovYang}.

%reconstructing a Riemannian metric \(g\) from the Dirichlet-to‑Neumann (DtN) map associated with the linear wave equation \((\partial_t^2-\Delta_g)u=0\) has been studied in several geometric settings.
%It is also worth noting that the reconstruction of potentials in linear versions of (\ref{2-1}), where $m=1$, has been studied in \cite{Feizmohammadietal.2021,Stefanov1989,StefanovYang%,Busch2026
%}. These works utilize the propagation of singularities along bicharacteristics to recover integrals of the coefficients over light rays, which requires knowledge of the DtN map or the scattering operator on the entire lateral boundary $(0,T)\times\partial\Omega$.

In recent years, the study of inverse problems for nonlinear wave equations has garnered growing interest. The pioneering work of \cite{Matti37} revealed that nonlinearity itself can serve as a powerful tool for solving corresponding inverse problems. By leveraging diverse nonlinear structures, several inverse problems that remain open in the linear setting have been successfully addressed in their nonlinear counterparts. The novel approach introduced in that work involves analyzing parameter-dependent families of solutions and performing higher-order linearizations with respect to each parameter.
This higher-order linearization technique, developed in \cite{Matti37}, has greatly enhanced the tractability of nonlinear inverse problems and has led to numerous advances in the field. For example, within the context of Einstein's equations, the recovery of leading-order coefficients has been investigated, beginning with the contributions in \cite{AliLauri14} and followed by further developments in \cite{AliLauri18, AliLauri24}.
More recently, as illustrated by works such as \cite{Matti3, Matti11, Matti12, AliLauri, KrupchykandUhlmann2020a, AliLauri14, LassasP8, Oksanenetal.2024, %SunandUhlmann1997, 
AliLauri24, WangandZhou2019}, inverse problems associated with nonlinear equations and higher-order linearization continue to be actively explored, with nonlinearity increasingly viewed as a beneficial resource.

Despite the existence of higher-order linearization methods, the conventional technique for solving inverse problems in nonlinear equations remains the first-order linearization. This approach was originally introduced in \cite{Isakov1993} for nonlinear parabolic inverse problems and has since been generalized to various inverse problems involving nonlinear models, as evidenced by works such as \cite{IsakovSylvester1994, LaiLin2019, SaloZhong2012, Sun2010, SunandUhlmann1997}.  
Recently, first-order linearization with recovery of the unknown  coefficient has been discussed in \cite{LuSaloXu, Zou} for solving the inverse Schr\"odinger potential problem for semilinear Helmholtz equation with power-type nonlinearity. Notably, the principle of inclusion-exclusion (PIE) is employed to prove that increasing stability holds for first-order linearization with respect to the potential function. Numerical examples in \cite{LuSaloXu, Zou} further demonstrate that first-order linearization is numerically more stable compared to higher-order linearization approaches, since numerical differentiation of first-order derivatives tends to be more stable in practice.

In this paper, we consider the following initial boundary value problem 
for the semilinear wave equation:
\begin{equation}\label{2-1}
    \begin{cases}
        \square u(t,x) 
        +qu^m(t,x) = 0, & \mathrm{in}\ (0,T)\times\Omega, \\
        \partial_\nu u(t,x) = f(t,x), & \mathrm{on}\ (0,T)\times\partial\Omega, \\
        u(0,x)=\partial_t u(0,x)=0, & x\in\Omega,
    \end{cases}
\end{equation}
where $m\ge2$ is an integer and $\Omega\subset\mathbb{R}^n$, with $n\ge2$,  
is a bounded open and strictly convex domain with smooth boundary 
$\partial\Omega$. Moreover, $\partial_\nu$ denotes the outer normal derivative on the 
boundary $(0,T)\times\partial\Omega$, and 
$\square =\partial_t^2 -\Delta $ is the standard 
wave operator.
For different nonlinear indices \( m \), we assume that the coefficient \( q \) satisfies the following property throughout the context:
\begin{equation*}
    q=\begin{cases}
        q(x)\in C^\infty(\overline{\Omega}),   & m=2, \\
        q(t,x)\in C^\infty([0,T]\times\overline{\Omega}), & m\ge3.   
    \end{cases}
\end{equation*}
We aim to establish a stability estimate for the recovery of the coefficient \( q \) in the above semilinear wave equation (\ref{2-1}) by combining the first-order linearization of the Neumann-to-Dirichlet (NtD) map and geometric optics constructions for wave equations.

\subsection{First-Order Linearization}
For ease of exposition, we first provide a description and formulation  of the first-order linearization. The rigorous proof will be presented in Section \ref{se2} below. Specifically, for any bounded Neumann boundary datum \( f \in C^{s+1}((0,T) \times \partial\Omega) \) with \( s \in \mathbb{N} \) satisfying \( s > (n+1)/2 \) and the compatibility conditions  
\[
\partial_t^l f\big|_{t=0} = 0, \quad \forall\, l = 0,1,\dots,s-1,
\]  
and for a sufficiently small coefficient \( q \), we obtain the well-posedness of problem (\ref{2-1}). Consequently, the following NtD map is well-defined:
\begin{Def}[Neumann-to-Dirichlet map]\label{Def:1}
    The \textbf{Neumann-to-Dirichlet map} $\Lambda_q$ of (\ref{2-1}) is defined by 
    \begin{equation}\label{eq_NtD map}
        \begin{split}
            \Lambda_q:\{ f\in 
    C^{s+1}((0,T)\times\partial\Omega): ||f||_{C^{s+1}}\le M\}
    &\to E^{s-\frac{1}{2}}((0,T)\times\partial\Omega) \\
  {\rm such\, that \qquad }  f &\mapsto u|_{(0,T)\times
        \partial\Omega}
        \end{split}
    \end{equation}
where $E^{s-\frac{1}{2}}((0,T)\times\partial\Omega)
:=\bigcap_{0\le k\le s} C^k((0,T);
    H^{s-k-\frac{1}{2}}(\partial\Omega))$ is the energy space and $H^s$ is the standard 
Sobolev space. 
\end{Def}

%\begin{equation}\label{2-2}\begin{split}
%    \Lambda_q:C^{s+1}((0,T)\times\partial\Omega)&\to 
%        E^{s-\frac{1}{2}}((0,T)\times\partial\Omega), \\
%    f &\mapsto u|_{(0,T)\times\partial\Omega},
%\end{split}\end{equation}

Following the first-order linearization for the semilinear parabolic and elliptic equations in \cite{Isakov1993,IsakovSylvester1994}, we outline the first-order linearization method for the semilinear wave equation (\ref{2-1}) below.  
Let $\gamma$ be a small parameter. Consider a scaled coefficient  
$\gamma q\in C^\infty(\overline{(0,T)\times\Omega})$ with  
$\|\gamma q\|_{C^\infty(\overline{(0,T)\times\Omega})}$ sufficiently small. The derivative of the NtD map given in (\ref{eq_NtD map}) with respect to \(\gamma\) yields the linearized NtD map:
\begin{equation}\label{11-3}
    \Lambda_q'f := \partial_\gamma(\Lambda_{\gamma q})|_{\gamma=0}f
    = u^{(1)}|_{(0,T)\times\partial\Omega},
\end{equation}
where $f\in C^{s+1}((0,T)\times\partial\Omega)$, $s>(n+1)/2$, and 
$u^{(1)}$ is the solution of the following linearized 
system:
\begin{equation}\label{11-5}
    (I_1)\begin{cases}
        \square u^{(1)} + q(u^{(0)})^m = 0, & \mathrm{in}\ (0,T)\times\Omega, \\
        \partial_\nu u^{(1)} = 0, & \mathrm{on}\ (0,T)\times\partial\Omega, \\
        u^{(1)}(0,x)=\partial_t u^{(1)}(0,x)=0, & x\in\Omega.
    \end{cases}
\end{equation}
Here, $u^{(0)}$ denotes the solution of 
the unperturbed problem:
\begin{equation}\label{11-4}
    (I_0)\begin{cases}
        \square u^{(0)}= 0, & \mathrm{in}\ (0,T)\times\Omega, \\
        \partial_\nu u^{(0)} = f, & \mathrm{on}\ (0,T)\times\partial\Omega, \\
        u^{(0)}(0,x)=\partial_t u^{(0)}(0,x)=0, & x\in\Omega.
    \end{cases}
\end{equation}

We study the inverse problem of 
reconstructing the unknown coefficient $q(x)$ or $q(t,x)$ from the knowledge of 
the first-order linearized NtD map $\Lambda_q'$, as defined in 
equation (\ref{11-3}). That is, we seek to identify the coefficient function $q$ by the linearized NtD map
\begin{equation*}
    q\mapsto\Lambda_q'.
\end{equation*}
We note that the linearized NtD map $\Lambda_q'$ is a nonlinear 
operator with respect to the Neumann boundary data $f$. By 
examining the linearized system (\ref{11-5})-(\ref{11-4}), one can 
verify that $\Lambda_q'$ is $m$-homogeneous, i.e.
\begin{equation*}
    \Lambda_q'(\beta f)=\beta^m\Lambda_q'f,\quad\mathrm{for}\ 
    f\in C^{s+1}((0,T)\times\partial\Omega)
    \ \mathrm{and}\ 
    \beta\in\mathbb{R}.
    % \ \mathrm{and}\ 
    % \beta\ge0.
\end{equation*}
%Furthermore, we define the \textit{nonlinear operator norm} of 
%$\Lambda_q'$ as
%\begin{equation*}
%    \|\Lambda_q'\|_\mathcal{N}:=\sup_{f\ne0}
%    \frac{\|\Lambda_q'f\|_{E^{s-\frac{1}{2}}((0,T)\times\partial\Omega)}}{\|f\|_{C^{s+1}((0,T)\times\partial\Omega)}^m}.
%\end{equation*}

\subsection{Main Result}
A natural obstruction for the proposed inverse coefficient problem is due to the finite speed of propagation. 
For instance, let us consider $m\geq 3$. In order to determine the value of $q(p)$ at a point 
$p\in(0,T)\times\Omega\subset\mathbb{R}^{1+n}$, 
there must exist a nontrivial solution to (\ref{2-1}) that 
propagates from $(0,T)\times\partial\Omega$ to $p$, and subsequently 
from $p$ back to $(0,T)\times\partial\Omega$. 
Due to the finite speed of wave propagation, a signal originating 
from a point $p'=(t_0,x_0)\in\mathbb{R}^{1+n}$ can influence only the region
\begin{equation*}
    \mathcal{J}_+(p')=\{(t,x)\in\mathbb{R}^{1+n}|t\ge t_0,\ |x-x_0|\le t-t_0\},
\end{equation*}
referred to as the $\mathbf{future}$ of $p'$. 
Similarly, we define the $\mathbf{past}$ of $p'$ by
\begin{equation*}
    \mathcal{J}_-(p')=\{(t,x)\in\mathbb{R}^{1+n}|t\le t_0,\ |x-x_0|\le t_0-t\},
\end{equation*}
We denote 
% the union of all such reachable sets from the boundary by
\begin{equation*}
    \mathcal{J}_\pm ((0,T)\times\partial\Omega)=\cup_{p'\in(0,T)\times\partial\Omega}
\mathcal{J}_\pm(p').
\end{equation*}
% Accordingly, the map $\Lambda_q$ contains no information about $q$ outside 
% the region
% \begin{equation}\label{regionD}
%     \mathbb{D}:=\mathcal{J}_+([d,T-d]\times\partial\Omega)\cap\mathcal{J}_-([d,T-d]\times\partial\Omega)\cap\left((0,T)\times\Omega\right),
% \end{equation}
% where $d>0$ is a small constant to ensure that the signal remains supported 
% within
% \begin{equation*}
%     \mathcal{J}_+((0,T)\times\partial\Omega)\cap\mathcal{J}_-((0,T)\times\partial\Omega)\cap\left((0,T)\times\Omega\right).
% \end{equation*}
% A schematic illustration of the region $\mathbb{D}$ is provided in Figure~\ref{fig:D}.

Accordingly, the map $\Lambda_q$ contains no information about $q$ outside 
the region
\begin{equation*}
    \mathcal{J}_+((0,T)\times\partial\Omega)\cap\mathcal{J}_-((0,T)\times\partial\Omega)\cap\left((0,T)\times\Omega\right),
\end{equation*}
and we define the region $\mathbb{D}$ as:
\begin{equation}\label{regionD}
    \mathbb{D}:=\mathcal{J}_+([d,T-d]\times\partial\Omega)\cap\mathcal{J}_-([d,T-d]\times\partial\Omega)\cap\left([d,T-d]\times\Omega\right),
\end{equation}
where $d>0$ is a small constant. A schematic illustration of the region $\mathbb{D}$ is provided in Figure~\ref{fig:D}.
Similarly for $m=2$, we assume $T\ge\mathrm{diam}(\Omega)+2d$ to ensure that every point $p\in\Omega$ can be detected by a signal originated from $(0,T)\times\partial\Omega$.

\begin{figure}[htbp]
    \centering
    \includegraphics[width=0.8\textwidth]{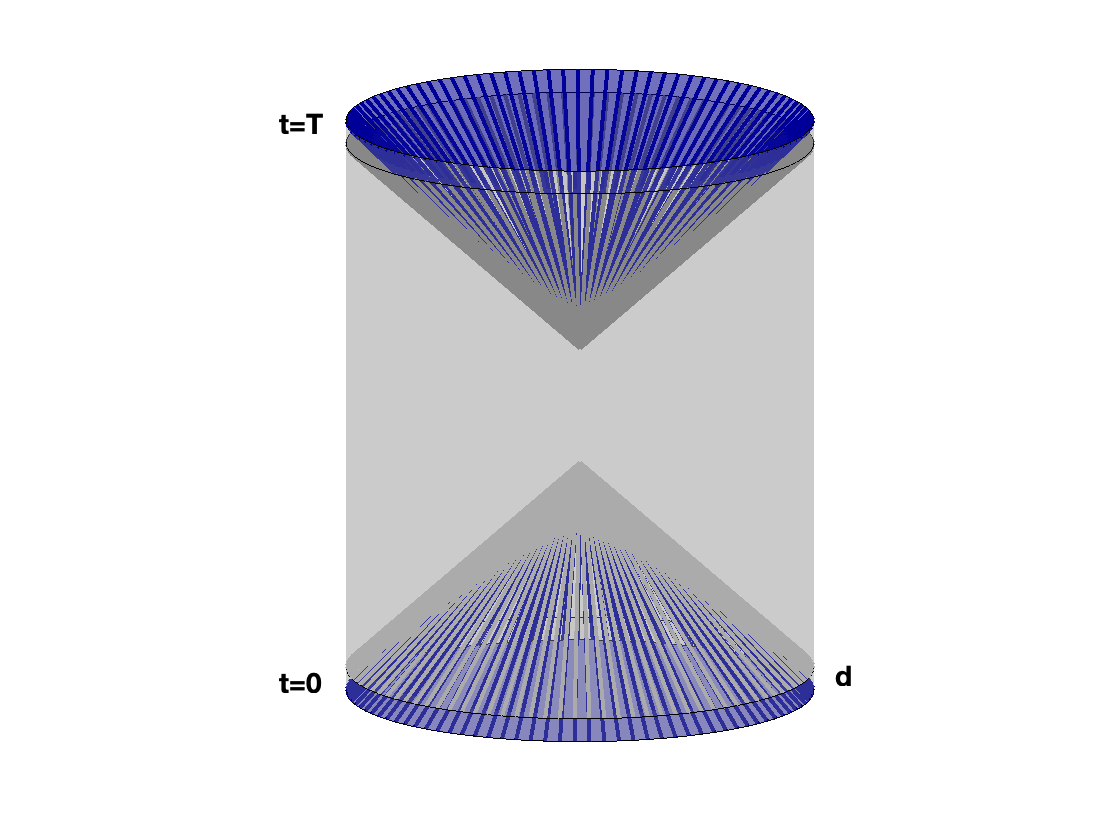}  % 插入图片，设置宽度为页面宽度的80%
    \caption{The grey region shows $\mathbb{D}$ in $(0,T)\times\mathbb{R}^2$.}  % 添加图片标题
    \label{fig:D}  % 给图片设置标签，用于引用
\end{figure}
The main result of our work is given as follows:

\begin{Thm}\label{Thm:main}
Let $\Omega$ be a bounded convex open domain in 
$\mathbb{R}^n$ $(n \geq 2)$ with $C^{\infty}$ boundary. Denote
$M>0$ and let $\|f\|_{C^{s+1}((0,T)\times\partial\Omega)}\leq M, s>(n+1)/2$.
Let the coefficient
\begin{equation*}
    q=\begin{cases}
        q(x)\in C^\infty(\overline{\Omega}),   & m=2, \\
        q(t,x)\in C^\infty([0,T]\times\overline{\Omega}), & m\ge3,   
    \end{cases}
\end{equation*}
% \textbf{[Q8: use $x$ instead of $\boldsymbol{x}$ to keep consistence.]}
in (\ref{2-1}) be sufficiently small in $H^s$-norm, and $T\ge\mathrm{diam}(\Omega)+2d$ when
$m=2$, where $d>0$ is given in (\ref{regionD}).
There holds the following stability estimates
\begin{align*}
    \left\{
    \begin{aligned}
        \|q\|_{L^\infty(\Omega)}&\le\mathcal{O}(\varepsilon^{\frac{1}{2(4s+n+11)}\cdot\frac{2s-n}{2s}}), & & m=2,\\
        \|q\|_{L^\infty(\mathbb{D})}&\le\mathcal{O}(
        \varepsilon^\frac{1}{4m(s+2)+n+8}), & & m\ge3,  
    \end{aligned}
    \right.
\end{align*}
where $\varepsilon := \sup_{\|f\|_{C^{s+1}((0,T)\times\partial\Omega)
}=1}\|\Lambda_q'(f)\|_{E^{s-\frac{1}{2}}((0,T)\times\partial\Omega)}$.
\end{Thm}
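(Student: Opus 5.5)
The plan is to turn the linearized NtD map into an integral identity for $q$ against products of $m$ solutions of the free wave equation, and then pick those solutions as geometric optics (GO) solutions concentrated near a point.

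\textbf{Integral identity.} Let $v$ solve the adjoint problem $\square v=0$ in $(0,T)\times\Omega$ with Neumann data $h$ and zero final data at $t=T$. Green's formula applied to $(I_1)$ then gives
\begin{equation*}
\int_0^T\!\!\int_\Omega q\,(u^{(0)})^m v\,dx\,dt=\int_0^T\!\!\int_{\partial\Omega}\Lambda_q'(f)\,h\,dS\,dt .
\end{equation*}
Hence the left side is bounded by $C\varepsilon\|f\|_{C^{s+1}}^m\|h\|$.

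\textbf{Polarization.} Since $\Lambda_q'$ is $m$-homogeneous, I would use the principle of inclusion--exclusion. Take $f=\sum_{j=1}^m f_j$ and form
\begin{equation*}
\sum_{\sigma\subset\{1,\dots,m\}}(-1)^{m-|\sigma|}\,\Lambda_q'\Bigl(\sum_{j\in\sigma}f_j\Bigr).
\end{equation*}
This isolates the term $m!\int q\,u_1\cdots u_m v$, where $u_j$ is the $(I_0)$-solution with data $f_j$. The same bound $C\varepsilon\prod_j\|f_j\|_{C^{s+1}}\,\|h\|$ holds, with a constant depending only on $m$.

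\textbf{Case $m\ge3$, $q=q(t,x)$.} Fix $p=(t_0,x_0)\in\mathbb D$. Choose null directions $\omega_1,\dots$ so that the phases combine to make the product $u_1\cdots u_m v$ non-oscillatory near $p$. One such choice is $u_1=e^{i\tau(t-\omega\cdot x)}a_1$, $u_2=e^{-i\tau(t-\omega\cdot x)}a_2$, with the remaining factors being real GO solutions along other light rays through $p$. Take the amplitudes $a_j=\chi_h$ to be cutoffs of width $h$ transverse to each ray, so that the rays intersect in a neighbourhood of size $\sim h$ around $p$. The definition of $\mathbb D$ is exactly what ensures these rays enter and exit through $[d,T-d]\times\partial\Omega$, so the boundary traces are legitimate data.

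The product then equals $\prod\chi_h+O(\tau^{-1})$ remainders. Its integral against $q$ approximates $q(p)\,h^{n+1}$, up to a Lipschitz error $h\cdot h^{n+1}\|q\|_{C^1}$. The data norms grow like $\tau^{s+1}h^{-(s+1)}$ each; the Sobolev remainder of the GO construction is $O(\tau^{-1}h^{-k})$. This gives
\begin{equation*}
|q(p)|\lesssim h+\tau^{-1}h^{-a}+\varepsilon\,\tau^{b}h^{-c},
\end{equation*}
with $b,c$ linear in $m(s+2)$ and $n$. Optimizing first $\tau$ and then $h$ as powers of $\varepsilon$ should give the exponent $1/(4m(s+2)+n+8)$. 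I would arrange that the phases cancel exactly, so that $\tau$ enters only through the remainders.

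\textbf{Case $m=2$, $q=q(x)$.} With three factors $u_1u_2v$ we cannot localize in spacetime without losing the degrees of freedom. Instead I would take plane-wave GO solutions and integrate in $t$ over a window of length $\ge\operatorname{diam}\Omega$, which is why $T\ge\operatorname{diam}(\Omega)+2d$ is needed. This produces the Fourier transform $\hat q(\xi)$ with an error $O(\tau^{-1})$, and the data norm grows like $\tau^{s+1}$. A low/high frequency split with cutoff $\rho$ then gives
\begin{equation*}
\|q\|_{H^{-1}}\lesssim \rho^{N}\varepsilon\,(\ldots)+\rho^{-1}.
\end{equation*}
Interpolating with the a priori $H^s$ bound and the Sobolev embedding $s>n/2$ yields the $L^\infty$ estimate. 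That interpolation is the source of the factor $(2s-n)/(2s)$.

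\textbf{Main obstacle.} The hardest step is the GO construction with Neumann boundary data. One must show that the boundary data remain bounded in $C^{s+1}$ with explicit polynomial dependence on $\tau,h$, and that the correction terms (reflections and the remainder of the transport equations) are small in the energy norm via the well-posedness estimates of Section~\ref{se2}. I would build the GO solutions in $\mathbb R^{1+n}$, take their Neumann traces as $f_j$, and control the difference with the true $u^{(0)}$. The true $u^{(0)}$ coincides with the GO solution until the wave reaches the boundary again, and finite speed of propagation together with the choice of $d$ ensures this happens only after time $T$ near $p$. The exponent bookkeeping is then routine but tedious.
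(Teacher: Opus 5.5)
Your architecture for $m\ge3$ is the paper's: Alessandrini identity, PIE polarization, beams through $p$ with exactly cancelling phases, a Lemma~\ref{Lem:4}-type approximation, then balancing $\tau,h$ against $\varepsilon$. However, the step you leave open is the one that needs an idea, and the configuration you offer fails in general. For $m=3$, if $u_3$ and $v$ are real beams on two distinct rays, every term of $u_3v$ carries a phase $e^{i\tau(\pm\eta_3\pm\eta_0)\cdot y}$ with $\pm\eta_3\pm\eta_0\neq0$. The localized integral is then $O((\tau h)^{-\infty})$ relative to $h^{n+1}$ and says nothing about $q(p)$. Any scheme in which phases cancel within groups of parallel covectors puts a forward wave on the ray of $v$, so that ray must cross $\Omega$ entirely within $(0,T)$. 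Membership $p\in\mathbb D$ does not give this, and no lower bound on $T$ is assumed for $m\ge3$. For example, take $\Omega$ a ball of radius $3d$, $T=4d$, and $p=(2d,x_0)$ with $\operatorname{dist}(x_0,\partial\Omega)$ slightly below $d$: then $p\in\mathbb D$, but every chord through $x_0$ is longer than $4d$. Two non-parallel null covectors never sum to a null one. So cancelling the phase of $v$, which must travel along the future ray $\xi^+$, by forward waves arriving from the past needs at least three distinct forward directions. This is exactly relation (\ref{ex31}), with $\xi_0=\sigma^2\xi^+$ and $\xi_1,\xi_2,\xi_3$ near $\xi^-$. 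It comes with bounds on $\kappa_j$, $|\xi_j|$ and the mutual angles that are uniform over $\mathbb D$, and these make the constants of Lemma~\ref{Lem:4} uniform. Separately, your claim that ``the integral approximates $q(p)h^{n+1}$'' fails for $p\in\mathbb D$ within distance $h$ of $\partial\Omega$, where the beam intersection is cut off by the boundary. The paper handles this by extending $q$ to $\Omega'$ and controlling the extension by the boundary estimate of Theorem~\ref{Thm:2}; this is the origin of the term $\varepsilon^{\frac14\frac{1}{m(s+2)+2}}$. Alternatively, you could normalize by the mass of $\prod_j a_{j0}\ge0$ inside $(0,T)\times\Omega$.

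For $m=2$ you take a genuinely different route. The paper cannot localize to a point, since three null covectors with zero sum are parallel. It therefore puts all three beams on one ray, $\xi_1=\xi_2=-\tfrac12\xi_0$, and Lemma~\ref{Lem:5} yields the light-ray transform, which for $q=q(x)$ is the X-ray transform. It then concludes with X-ray stability on the strictly convex $\Omega$ and an $L^2$--$H^s$ interpolation; that last step, not an $H^{-1}$ interpolation, produces the factor $(2s-n)/(2s)$. Your Fourier route can be made to work, and then the waves are exact solutions, so there is no GO remainder and no X-ray theory. But the sentence ``integrate in $t$ \dots\ produces $\hat q$'' hides its whole content. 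Plane waves do not have zero Cauchy data at $t=0$ (resp.\ $t=T$), and the correction needed to enforce this is not small. You must therefore use progressive waves $g_j(\xi_j\cdot y)e^{i\tau\xi_j\cdot y}$ with step-like envelopes $g_j$. Then $\int_0^T u_1u_2v\,dt=W(x)e^{-i\zeta\cdot x}$, where the weight $W$ is not constant: it is piecewise affine in $x$, with slope of order $|\zeta|/\tau$ (the spread of the propagation directions). To recover $\hat q$ you need three things. First, nearly co-propagating waves with $\tau\gg|\zeta|$. Second, an extra error $O(|\zeta|/\tau)$ balanced against $\varepsilon\tau^{2s+5}$. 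Third, a timing argument showing that at every $x\in\Omega$ all three waves are switched on simultaneously, which is where $T\ge\operatorname{diam}(\Omega)+2d$ really enters.
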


Below we discuss how our results differ from those in \cite{LassasP8}, which addresses a similar problem using higher-order linearization of the DtN map. More precisely, \cite{LassasP8} considers the inverse coefficient problem for the semilinear wave equation (\ref{2-1}) via higher-order linearizations of the DtN map, recovering the potential $q(t,x)$ in a compact set $W\subset[t_1,t_2]\times\Omega\subset[0,T]\times\Omega$. By employing the Radon transform, \cite{LassasP8} is able to recover both time- and space-dependent coefficients $q(t,x)$ for indices $m\geq 2$ and $n\geq 1$.
In contrast, our work considers the same inverse problem for (\ref{2-1}) but establishes a stability estimate through a first-order linearization of the NtD map. For \( m = 2 \), the reconstruction reduces to the X-ray transform, while for \( m \geq 3 \), we achieve pointwise reconstruction of the coefficient. By implementing the techniques developed in \cite{MattiLorentzian}, we are able to remove the compact support condition in \cite{LassasP8} and recover the coefficient function \( q(t,x) \) up to the boundary for \( m \geq 3 \) and \( q(x) \) for $m=2$.

The structure of this paper is as follows. Section \ref{se2} is dedicated to establishing the well-posedness of the forward problem, including the NtD map, the first-order linearization and introducing geometric optics solutions to the wave equation. In Section \ref{se3}, we derive the Alessandrini–PIE type identity, which relates the boundary observations to the unknown coefficient. Section \ref{se5} develops the stability result concerning the boundary of the spatial domain. Section \ref{se6} explains how the boundary terms of the geometric optics solutions should be chosen to construct appropriate boundary conditions. In Section \ref{se7}, we prove the stability estimate, separately treating the cases \(m \ge 3\) and \(m = 2\) using different approximation techniques. Finally, in Section \ref{se8}, 
numerical experiments demonstrate that the coefficient can be recovered from boundary measurements by a neural network inversion algorithm based on the least square framework.

%The structure of this paper is outlined as follows. Section 2 is 
%dedicated to establishing the well-posedness of the forward problem, 
%which includes the NtD map and the first order linearization 
%method. In Section 3, we derive the Alessandrini-PIE identity
%to establish the relationship between the boundary observations and the unknown
%interior information. Section 4 clarifies the region 
%where information about the interior medium can be 
%obtained and introduces the geometric optics solution 
%to the wave equation. 
%Section 5 develops the stability result for the boundary 
%of the spatial region. 
%Section 6 explains how the boundary terms of the 
%geometric optics should be chosen to
%construct detect functions. Finally, in Section 7, 
%we prove the stability estimate, dividing into the cases 
%$m\ge3$ and $m=2$, using different approximation kernels.

\section{Neumann-to-Dirichlet Map and Its Linearization}\label{se2}

% \textbf{[Q14: use $\overline{\Omega}$ instead of $\bar{\Omega}$.]}
% and we note 
% $\square u(t,x)=\partial_t^2 u(t,x)-\Delta 
% u(t,x)$. 
% In the case $m=2$, the only difference is that the coefficient depends 
% solely on space, 
% $q=q(x)\in H^s(\Omega)$; the proof can be adapted accordingly by 
% modifying the relevant norm.

In this section, we discuss the well-posedness of the semilinear wave equation (\ref{2-1}) and introduce the corresponding (linearized) NtD map. Furthermore, we construct geometric optics solutions for the wave equation and analyze their truncation properties.
\subsection{Well-Posedness and the Neumann-to-Dirichlet Map}
For sufficiently small $q=q(t,x)\in C^\infty([0,T]\times\overline{\Omega})$, we first establish the well-posedness of 
the NtD map $\Lambda_q$.
% \textbf{[for sufficiently small $q$. ]}
To this end, we introduce the energy space
\begin{equation*}
    E^s((0,T)\times\Omega):=\bigcap_{0\le k\le s} C^k((0,T);
    H^{s-k}(\Omega)),
\end{equation*}
which is equipped with the norm
\begin{equation*}
    \|u\|_{E^s}=\sup_{t\in(0,T)}\sum_{0\le k\le s} \|\partial_t^k 
                 u(t,\cdot)\|_{H^{s-k}(\Omega)}.
\end{equation*}

\begin{Thm}\label{Thm:1}
    Let $T>0$ be fixed. Assume that $f\in C^{s+1}((0,T)\times
    \partial\Omega)$ with $s>(n+1)/2$, and that 
    the compatibility conditions 
    \begin{equation*}
        \partial_t^l f|_{t=0}=0,\quad\mathrm{for\ 
    all}\ l=0,1,\dots,s-1,
    \end{equation*}
    are satisfied. 
    Fix $M>0$, for any $f$ with $||f||_{C^{s+1}}\le M$, there exists a constant
    $C_0(M)>0$,
    % \textbf{[$C$ depends on $M$ only or depends on $f$?]}, 
    such that if $\|q\|_{E^s}<C_0(M)$,
    the initial boundary value problem 
    (\ref{2-1}) admits a unique solution
    \begin{equation*}
        u\in {E^s((0,T)\times\Omega)},
    \end{equation*}
    which satisfies the estimate
    \begin{equation*}
        ||u||_{E^s((0,T)\times\Omega)} \le 
        C(m,M,T,\Omega)||f||_{C^{s+1}((0,T)\times\partial\Omega)},
    \end{equation*}
    where the constant $C(m,M,T,\Omega)>0$ is independent of $f$. 
    % \textbf{[Q17: double use of $C$ in different places. ]}
\end{Thm}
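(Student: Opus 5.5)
The plan is a contraction-mapping argument built on the linear theory for the Neumann problem. We split $u=u^{(0)}+w$, where $u^{(0)}$ solves the linear problem (\ref{11-4}) with Neumann datum $f$. The remainder $w$ then solves
\begin{equation*}
\square w=-q\,(u^{(0)}+w)^m \ \text{in } (0,T)\times\Omega,\qquad \partial_\nu w=0,\qquad w(0)=\partial_t w(0)=0.
\end{equation*}

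\textbf{Step 1: linear estimate.} For the linear Neumann problem $\square v=F$, $\partial_\nu v=g$ with zero Cauchy data, we use the standard energy estimate of Lasiecka--Lions--Triggiani type. It is obtained by lifting $g$ to a function $G$ with $\partial_\nu G=g$ and differentiating in time up to order $s$; the compatibility conditions on $f$ ensure the time derivatives of the lifted problem are themselves compatible. The estimate reads
\begin{equation*}
\|v\|_{E^s}\le C(T,\Omega)\big(\|F\|_{E^{s-1}}+\|g\|_{C^{s+1}}\big).
\end{equation*}
In particular $\|u^{(0)}\|_{E^s}\le C_1\|f\|_{C^{s+1}}\le C_1M$. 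Spatial regularity is recovered from the time derivatives by elliptic regularity for the Neumann Laplacian, applied at each fixed $t$.

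\textbf{Step 2: algebra property.} Since $s>(n+1)/2>n/2$, $H^{s-k}(\Omega)$ is a Banach algebra for small $k$, and the mixed time-space products in $E^s$ are handled by Leibniz and Moser-type estimates. From these we expect
\begin{equation*}
\|qv^m\|_{E^{s-1}}\le C\|q\|_{E^s}\|v\|_{E^s}^m,\qquad
\|q(v_1^m-v_2^m)\|_{E^{s-1}}\le C\|q\|_{E^s}\big(\|v_1\|_{E^s}+\|v_2\|_{E^s}\big)^{m-1}\|v_1-v_2\|_{E^s}.
\end{equation*}

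\textbf{Step 3: fixed point.} Define $\Phi(w)$ to be the solution of $\square\Phi=-q(u^{(0)}+w)^m$ with homogeneous Neumann and initial data, and work on the ball $B=\{\|w\|_{E^s}\le M\}$. By Steps 1 and 2,
\begin{equation*}
\|\Phi(w)\|_{E^s}\le C\|q\|_{E^s}(C_1M+M)^m,
\end{equation*}
and the Lipschitz constant of $\Phi$ on $B$ is bounded by $C\|q\|_{E^s}(C_1M+M)^{m-1}$. Choosing $C_0(M)$ small makes $\Phi$ a contraction of $B$ into itself, which gives existence and uniqueness in $B$. Uniqueness in all of $E^s$ follows by a Gronwall argument on a short time interval, then continuation.

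\textbf{Step 4: linear bound in $\|f\|$.} The bound must be linear in $\|f\|$, not merely bounded by $M$, and this follows from the fixed point itself. We have
\begin{equation*}
\|w\|_{E^s}\le C\|q\|_{E^s}(\|u^{(0)}\|_{E^s}+\|w\|_{E^s})^{m-1}\big(\|u^{(0)}\|_{E^s}+\|w\|_{E^s}\big),
\end{equation*}
and both $\|u^{(0)}\|_{E^s}$ and $\|w\|_{E^s}$ are at most a constant times $M$. Absorbing $\|w\|$ into the left side gives $\|w\|_{E^s}\le C'\|u^{(0)}\|_{E^s}\le C\|f\|_{C^{s+1}}$. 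Hence $\|u\|_{E^s}\le C(m,M,T,\Omega)\|f\|_{C^{s+1}}$.

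\textbf{Main obstacle.} The delicate step is Step 1, the regularity of the Neumann wave problem. The Neumann problem has a known loss of regularity: it fails the uniform Lopatinskii condition, so boundary data in $H^r$ yield solutions in less than $H^{r+1}$. The plan is to use the extra derivative of assumed smoothness, $f\in C^{s+1}$ rather than $H^{s-1/2}$, to absorb this loss through the lift $G$, which has regularity $s+1$ in time and space. The problem for $v-G$ then has homogeneous Neumann data and a source $F-\square G\in E^{s-1}$. It remains to verify the compatibility of this source at $t=0$, which uses $\partial_t^l f|_{t=0}=0$.
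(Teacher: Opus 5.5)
Your proposal is correct and follows essentially the same route as the paper: split off the linear solution $u^{(0)}$ and run a contraction for the remainder with homogeneous Neumann data when $\|q\|_{E^s}$ is small. The paper cites Dafermos--Hrusa's Theorem 3.1 for the linear estimate where you derive it via a lift, and your Step 4 makes explicit the bound linear in $\|f\|_{C^{s+1}}$, which the paper's proof leaves implicit.

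One detail to tighten: restrict your ball $B$ to those $w$ with $\partial_t^k w(0)=0$ for $k\le s-1$. This is a closed set that $\Phi$ preserves, and without it the source $-q(u^{(0)}+w)^m$ need not satisfy the compatibility conditions at $t=0$ that your $E^s$ estimate requires.
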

\begin{proof}
We fix $s>(n+1)/2$ to ensure that $E^s$ is an algebra, as shown in 
\cite{Lassas14,LassasP8}. 
Then, there exists a 
function $u^{(0)}\in E^{s} ((0,T)\times\Omega)$ satisfying the linear initial-boundary value problem (\ref{11-4}),
and it yields the energy estimate
\begin{equation*}
    ||u^{(0)}||_{E^{s} (0,T)
\times\Omega)}\le C||f||_{C^{s+1}((0,T)\times\partial\Omega)}\le 
CM,
\end{equation*} 
where $C=C(m,T,\Omega)>0$ is a constant.

Denote $\tilde{u}=u-u^{(0)}$, where $u$ solves the wave equation 
(\ref{2-1}). Then $\tilde{u}$ satisfies
\begin{equation}\label{11-6}
    \begin{cases}
        \square \tilde{u}= - q(t,x)(\tilde{u}+u^{(0)})^m, 
            &\mathrm{in}\ (0,T)\times\Omega, \\
        \partial_\nu \tilde{u} = 0, &\mathrm{on}\ (0,T)\times\partial\Omega, \\
        \tilde{u}(0,x)=\partial_t\tilde{u}(0,x)=0, &x\in\Omega.
    \end{cases}
\end{equation}
This equation is of the same form as in  
\cite[Eq. (5.12)]{Zhai8}. For $R>0$, we define the space $W(R,T)$ to consist of all 
functions $w$ such that
\begin{equation*}
    \begin{split}
       &  w \in \bigcap_{0\le k\le s} W^{k,\infty}((0,T);H^{s-k}(\Omega)), \\
       & ||w||_W :=||w||_{E^s}=\sup_{t\in(0,T)} \sum_{0\le k\le s} ||\partial_t^k 
                        w(t,\cdot)||_{H^{s-k}}\le R.
    \end{split}
\end{equation*}
We write the nonlinear term as
\begin{equation*}
    q(t,x)(\tilde{u}+u^{(0)})^m=
 q(t,x)(u^{(0)})^m+q(t,x)\left(\sum_{k=1}^{m}C_m^k
\tilde{u}^{k-1}(u^{(0)})^{m-k}\right) \tilde{u}, 
\end{equation*} 
%where
%\begin{align*}
%    H((t,x),u^{(0)}) &= q(t,x)(u^{(0)})^m,\\
%    G((t,x),\tilde{u},u^{(0)})&=-q(t,x)\sum_{k=1}^{m}C_m^k
%                            \tilde{u}^{k-1}(u^{(0)})^{m-k},
%\end{align*}
and obtain the estimate
\begin{equation*}
        \sup_{t\in(0,T)} \sum_{0\le k\le s} ||\partial_t^k (q(u^{(0)})^m)||_{H^{s-k}(\Omega)} 
        \le C(m,M,T,\Omega)||q||_{E^{s}((0,T)\times\Omega)}.
\end{equation*}
In addition, 
%it is known that $G((t,x),\tilde{u},u^{(0)})$ is essentially 
%a homogeneous polynomial of degree $m-1$ in terms of 
%$(\tilde{u},u^{(0)})$, thus 
we have
\begin{equation*}
    \begin{split}
        q\sum_{k=1}^{m}C_m^k
        \tilde{u}^{k-1}(u^{(0)})^{m-k} &\in \bigcap_{0\le k\le s} W^{k,\infty}
                                 ((0,T);H^{s-k}(\Omega)), \\
        \left\|q\sum_{k=1}^{m}C_m^k
        \tilde{u}^{k-1}(u^{(0)})^{m-k}\right\|_{E^s} &\le ||q||_{E^s}\cdot C 
            \sum_{k=0}^{m-1}||u^{(0)}||_{E^s}^{k} ||\tilde{u}||_W^{m-1-k}\\
        &\le C||q||_{E^s}(1+||\tilde{u}||_W),
    \end{split}
\end{equation*}
for $\tilde{u}\in W(R_0,T)$ with $R_0>0$ sufficiently small.

Given $\tilde{w}\in W(R_0,T)$, we consider the linear initial-boundary value problem:
\begin{equation}\label{11-7}
    \begin{cases}
        \square \tilde{u} 
            = - q(t,x)(\tilde{w}+u^{(0)})^m, &\mathrm{in}\ 
                (0,T)\times\Omega, \\
        \partial_\nu \tilde{u} 
            = 0, &\mathrm{on}\ (0,T)\times\partial\Omega, \\
        \tilde{u}(0,x)=\partial_t\tilde{u}(0,x)=0, &x\in\Omega.
    \end{cases}
\end{equation}
By Theorem 3.1 in \cite{Zhai8}, there exists a unique solution 
$\tilde{u}\in \bigcap_{0\le k\le s} W^{k,\infty}((0,T);H^{s-k}(\Omega))$ 
to (\ref{11-7}), and it satisfies the estimate
\begin{equation*}
    \begin{split}
        ||\tilde{u}||_{E^s} &\le CTe^{KT}\left(\left\|q\sum_{k=1}^{m}C_m^k
                                    \tilde{w}^{k-1}(u^{(0)})^{m-k}\right\|_{E^s}||\tilde{w}||_W+||q(u^{(0)})^m||_{E^s}\right) \\
        &\le CTe^{KT}||q||_{E^s} (1 +||\tilde{w}||_W),
    \end{split}
\end{equation*}
where $C,K>0$ are generic constants. Define $\mathcal{T}$ to be the map that maps 
$\tilde{w}\in W(R_0,T)$ to the solution $\tilde{u}$ of equation 
(\ref{11-7}). By choosing $R_0>0$ sufficiently small 
and ensuring that
$||q||_{E^s}<\frac{R_0e^{-KT}}{CT(1+R_0)}$, we can guarantee that 
\begin{equation*}
    ||\tilde{u}||_W<R_0,
\end{equation*}
so that $\mathcal{T}$ maps $W(R_0,T)$ into itself.

Suppose that there exist two solutions $\tilde{u}_1$ and $\tilde{u}_2$, each satisfying the equation
\begin{equation*}
    \begin{cases}
        \square \tilde{u}_j= - q(t,x)(\tilde{w}_j+u^{(0)})^m, 
            &\mathrm{in}\ (0,T)\times\Omega,\\
        \partial_\nu \tilde{u}_j = 0 &\mathrm{on}\ (0,T)\times\partial\Omega,\\
        \tilde{u}_j(0,x)=\partial_t\tilde{u}_j(0,x)=0, &x\in\Omega.
    \end{cases}
\end{equation*}
Assume further that $\tilde{u}_j=\mathcal{T}\tilde{w}_j$, $j=1,2$. 
Then the difference $\tilde{u}_1-\tilde{u}_2$ satisfies
\begin{equation*}
    \begin{split}
        \square(\tilde{u}_1-\tilde{u}_2) &= - q(t,x)(\tilde{w}_1+u^{(0)})^m +  q(t,x)(\tilde{w}_2+u^{(0)})^m \\
        &:= q(t,x)\tilde{G}((t,x),\tilde{w}_1,\tilde{w}_2,u^{(0)})(\tilde{w}_1-\tilde{w}_2),
    \end{split}
\end{equation*}
where $\tilde{G}((t,x),\tilde{w}_1,\tilde{w}_2,u^{(0)})$ 
is a polynomial function depending on 
$\tilde{w}_1(t,x)$, $\tilde{w}_2(t,x)$ and $u^{(0)}(t,x)$.
% \textbf{[Q15: not a polynomial in $(t,x)$]}
It then follows that
\begin{equation*}
    ||\mathcal{T}\tilde{w}_1-\mathcal{T}\tilde{w}_2||_{E^s} = 
    ||\tilde{u}_1-\tilde{u}_2||_{E^s} \le C||q||_{E^s} ||\tilde{w}_1-\tilde{w}_2||_{E^s} e^{KT}.
\end{equation*}
% \textbf{[Q16: use $\|\cdot\|_{E^s}$ not $\|\cdot\|_W$ if you do not know aprior it is in $W$.]}
By choosing $||q||_{E^s}$ sufficiently small such that 
$C||q||_{E^s} e^{KT}<1$, the map $\mathcal{T}$ becomes a 
contraction. Consequently, the equation (\ref{11-6}) 
admits a unique solution 
$\tilde{u}\in W(R_0,T)$. Applying Theorem 3.1 in \cite{Zhai8} 
once again, we conclude that
$$
    \tilde{u}\in {E^s((0,T)\times\Omega)}.
$$
\end{proof}

Based on the well-posedness of the initial-boundary value problem (\ref{2-1}) established above, we can then define the corresponding NtD map $\Lambda_q$. In particular, this definition leads to the following linearized NtD map, which will be used in the present work to establish the stability estimates.

\subsection{The Linearized NtD Map}
We recall the linearized NtD map  
\begin{equation*}  
    \Lambda_q' : C^{s+1}((0,T)\times\partial\Omega) \to E^{s-\frac{1}{2}}((0,T)\times\partial\Omega)  
\end{equation*}  
defined in (\ref{11-3}). In the following proposition, we show that it indeed corresponds to the Fréchet derivative with respect to the coefficient \(q\).
\begin{Prop}\label{Prop:1}
    Let assumptions of Theorem \ref{Thm:1} hold. Then, 
    for any $f\in C^{s+1}((0,T)\times\partial\Omega)$ with 
    $||f||_{C^{s+1}}\le M$, the following estimate holds:
    \begin{equation*}
        ||\Lambda_qf-\Lambda_0f-\Lambda_q'f||_{E^{s-\frac{1}{2}}
        ((0,T)\times\partial\Omega)} \le C(T,\Omega,M) 
        ||q||_{E^s((0,T)\times\Omega)}^2.
    \end{equation*}
\end{Prop}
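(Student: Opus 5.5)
We write the solution of (\ref{2-1}) as $u=u^{(0)}+\tilde u$, where $u^{(0)}$ solves (\ref{11-4}) and $\tilde u$ solves (\ref{11-6}). Since $\Lambda_0 f=u^{(0)}|_{(0,T)\times\partial\Omega}$ and $\Lambda_q' f=u^{(1)}|_{(0,T)\times\partial\Omega}$, with $u^{(1)}$ solving (\ref{11-5}), the quantity to be estimated is the trace of the remainder
\[
w:=u-u^{(0)}-u^{(1)}=\tilde u-u^{(1)} .
\]
Subtracting (\ref{11-5}) from (\ref{11-6}), $w$ has homogeneous Neumann data, zero Cauchy data, and satisfies
\[
\square w=-q\bigl[(u^{(0)}+\tilde u)^m-(u^{(0)})^m\bigr]=-q\Bigl(\sum_{k=1}^m C_m^k\tilde u^{k-1}(u^{(0)})^{m-k}\Bigr)\tilde u .
\]
The plan is to show that this source is quadratic in $q$ in the $E^s$ norm. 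We then apply the linear energy estimate (Theorem 3.1 of \cite{Zhai8}, as used in the proof of Theorem \ref{Thm:1}) to bound $\|w\|_{E^s}$, and finally take the trace.

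\textbf{Step 1 (a priori bound $\|\tilde u\|_{E^s}\le C\|q\|_{E^s}$).} This comes from the fixed-point argument of Theorem \ref{Thm:1}. Since $\tilde u=\mathcal T\tilde u$ and $\tilde u\in W(R_0,T)$, the estimate there gives
\[
\|\tilde u\|_{E^s}\le CTe^{KT}\|q\|_{E^s}\bigl(1+\|\tilde u\|_W\bigr)\le C(T,\Omega,M)\|q\|_{E^s},
\]
with the factor $\|u^{(0)}\|_{E^s}\le CM$ absorbed into the constant. Equivalently, we can subtract $\mathcal T 0$ and use the contraction property: $\|\tilde u-\mathcal T0\|\le\theta\|\tilde u\|$ with $\theta<1$, and $\|\mathcal T0\|_{E^s}\le C\|q(u^{(0)})^m\|_{E^s}\le C\|q\|_{E^s}$.

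\textbf{Step 2 (source estimate).} Because $s>(n+1)/2$, $E^s$ is an algebra. Hence
\[
\Bigl\|q\sum_{k=1}^m C_m^k\tilde u^{k-1}(u^{(0)})^{m-k}\tilde u\Bigr\|_{E^s}\le C\|q\|_{E^s}\sum_{k=1}^m\|\tilde u\|_{E^s}^{k}\|u^{(0)}\|_{E^s}^{m-k}\le C(M)\|q\|_{E^s}\|\tilde u\|_{E^s}\le C\|q\|_{E^s}^2 .
\]
In the middle inequality we use $\|\tilde u\|_{E^s}\le R_0$ to bound the higher powers of $\|\tilde u\|_{E^s}$ by its first power.

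\textbf{Step 3 (energy and trace).} The linear Neumann estimate gives $\|w\|_{E^s((0,T)\times\Omega)}\le CTe^{KT}\|\square w\|_{E^s}\le C\|q\|_{E^s}^2$. The trace theorem $H^{s-k}(\Omega)\to H^{s-k-1/2}(\partial\Omega)$, applied to each $\partial_t^k w(t,\cdot)$ for $0\le k\le s-1$, then yields the bound in $E^{s-\frac12}((0,T)\times\partial\Omega)$.

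\textbf{Expected obstacle.} The top term $k=s$ needs care: $\partial_t^s w(t)$ lies only in $L^2(\Omega)$, which has no trace. There are two options:
\begin{itemize}
\item use the sharper regularity of the Neumann problem as in \cite{Zhai8}, gaining a half derivative on the boundary;
\item run the whole argument at level $s+1$ for the interior norm, which is possible since $f\in C^{s+1}$ and $q$ is smooth, so $u^{(0)}\in E^{s+1}$.
\end{itemize}
We would try the second option first, as it only requires repeating Steps 1 and 2 with $s$ replaced by $s+1$.
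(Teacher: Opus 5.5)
Your argument is the paper's argument. The paper also writes the remainder as the trace of $u-u^{(0)}-u^{(1)}$, which satisfies $\square(u-u^{(0)}-u^{(1)})=-q\bigl(u^m-(u^{(0)})^m\bigr)$ with zero Neumann and Cauchy data. It bounds $\|u-u^{(0)}\|_{E^s}\le C\|q\|_{E^s}\|u\|_{E^s}^m$; this is your Step 1, obtained there directly from the energy estimate for $\square(u-u^{(0)})=-qu^m$ rather than through the map $\mathcal{T}$. It then factors $u^m-(u^{(0)})^m$ using the algebra property of $E^s$ (your Step 2), and finishes with the energy estimate and the trace theorem.

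The one place you go beyond the paper is the top-order trace, and your concern is legitimate. The paper simply asserts $\|w\|_{E^{s-\frac{1}{2}}((0,T)\times\partial\Omega)}\le C\|w\|_{E^s((0,T)\times\Omega)}$, which the trace theorem does not justify for the $k=s$ component.

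Of your two remedies, take the first. For $\square w=g$ with homogeneous Neumann and Cauchy data, apply the energy estimate to each $\partial_t^k w$, $k\le s$, and combine it with elliptic regularity for the Neumann Laplacian. This gains one derivative: $\|w\|_{E^{s+1}}\le C(T,\Omega)\|g\|_{E^s}$. The compatibility conditions are automatic here, since the relevant time derivatives of $u^{(0)}$ and $\tilde u$, hence of $g$ and $w$, vanish at $t=0$. Then every $\partial_t^k w(t)$ with $k\le s$ lies in $H^{s+1-k}(\Omega)$, so its trace lies in $H^{s-k+\frac{1}{2}}(\partial\Omega)\subset H^{s-k-\frac{1}{2}}(\partial\Omega)$. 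The bound remains $C\|q\|_{E^s}^2$.

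Your second remedy, which you said you would try first, does not prove the statement as written. Rerunning Steps 1--2 at level $s+1$ requires $q$ to be small in $E^{s+1}$. It also puts $\|q\|_{E^{s+1}}^2$ on the right-hand side, because the product $q\cdot(\cdots)$ lies in $E^{s+1}$ only if $q$ does. Moreover, Theorem \ref{Thm:1} gives $u^{(0)}\in E^{s+1}$ only for $f\in C^{s+2}$, with one more compatibility condition. Your claim that $f\in C^{s+1}$ already suffices would need a sharper Neumann regularity result than the paper provides.
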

\begin{proof}
By the definitions of the NtD map $\Lambda_q$ in (\ref{eq_NtD map}) and the 
linearized NtD map $\Lambda_q'$ in (\ref{11-3}), we have
\begin{equation*}
    \Lambda_q f - \Lambda_0 f - \Lambda_q'f = (u-u^{(0)}-u^{(1)})|_{(0,T)\times\partial\Omega}, \qquad \mathrm{for}\ ||f||_{C^{s+1}((0,T)\times\partial\Omega)}\le M,
\end{equation*}
where $u$, $u^{(0)}$, and $u^{(1)}$ solve the original problem in 
(\ref{2-1}), the unperturbed problem $(I_0)$ in (\ref{11-4}) and the 
linearized problem $(I_1)$ in (\ref{11-5}), respectively.

We adopt all the assumptions stated in Theorem \ref{Thm:1} and notations in 
Definition \ref{Def:1}. By subtracting the original problem given in equation 
(\ref{2-1}) from the unperturbed problem denoted as $(I_0)$ in (\ref{11-4}), we obtain
\begin{equation*}
    \begin{cases}
        \square (u-u^{(0)}) = -qu^m, & \mathrm{in}\ (0,T)\times\Omega, \\
        \partial_\nu (u-u^{(0)}) = 0, & \mathrm{on}\ (0,T)\times\partial\Omega, \\
        (u-u^{(0)})(0,x)=\partial_t (u-u^{(0)})(0,x)=0, & x\in\Omega.
    \end{cases}
\end{equation*}
This leads to the estimate
\begin{equation*}
        ||u-u^{(0)}||_{E^s((0,T)\times\Omega)} 
           \le C(T,\Omega)||q||_{E^s((0,T)\times\Omega)}||u||_{E^s((0,T)\times\Omega)}^m.
\end{equation*}
% \textbf{[Why $\|\cdot\|_{E^{s-2}}$?]}
Similarly, we have
\begin{equation*}
    ||u-u^{(0)}-u^{(1)}||_{E^s((0,T)\times\Omega)} \le C(T,\Omega)||q||_{E^s((0,T)\times\Omega)}||u^m-(u^{(0)})^m||_{E^s((0,T)\times\Omega)}.
\end{equation*}
By the trace theorem, 
\begin{equation*}
    ||u||_{E^{s-\frac{1}{2}}((0,T)\times\partial\Omega)}
\le C(T,\Omega)||u||_{E^s((0,T)\times\Omega)},
\end{equation*}
and hence,
\begin{equation*}
    ||\Lambda_qf-\Lambda_0f-\Lambda_q'f||_{E^{s-\frac{1}{2}}
    ((0,T)\times\partial\Omega)} \le C(T,\Omega)||q||_{E^s((0,T)\times\Omega)}
    ||u^m-(u^{(0)})^m||_{E^s((0,T)\times\Omega)}.
\end{equation*}

By Theorem \ref{Def:1} and the preceding estimate, the final term 
can be bounded as follows:
\begin{equation*}
    \begin{split}
        ||u^m-(u^{(0)})^m||_{E^s((0,T)\times\Omega)} 
           &\le C(T,\Omega)||u-u^{(0)}||_{E^s((0,T)\times\Omega)} \sum_{j=1}^m ||u||_{E^s((0,T)\times\Omega)}^{m-j} ||u^{(0)}||_{E^s((0,T)\times\Omega)}^{j-1}\\
           &\le C(T,\Omega)||q||_{E^s((0,T)\times\Omega)}\sum_{j=1}^m ||u||_{E^s((0,T)\times\Omega)}^{2m-j} ||u^{(0)}||_{E^s((0,T)\times\Omega)}^{j-1}\\
           &\le C(T,\Omega)||q||_{E^s((0,T)\times\Omega)}\sum_{j=1}^m ||f||_{C^{s+1}((0,T)\times\partial\Omega)}^{2m-1} \\
           &\le C(T,\Omega) M^{2m-1}||q||_{E^s((0,T)\times\Omega)}.
    \end{split}
\end{equation*}
Hence, we obtain
\begin{equation*}
    ||\Lambda_qf-\Lambda_0f-\Lambda_q'f||_{E^{s-\frac{1}{2}}
    ((0,T)\times\partial\Omega)} \le C(T,\Omega) M^{2m-1}
    ||q||_{E^s((0,T)\times\Omega)}^2.
\end{equation*}
\end{proof}

\subsection{Geometric Optics Solutions}\label{se4}
To further investigate the stability of the inverse coefficient problem for the semilinear wave equation using the linearized NtD map, it is necessary to construct solutions to the linear wave equation $\square u = 0$, for instance in (\ref{11-4}). In particular, we employ the conventional method of asymptotic geometric optics, and denote the resulting solution to $\square u = 0$ as \( u_\tau(y, \xi) \).
Let $y:=(t,x)$, following \cite{AliLauri}, 
the construction is based on the ansatz
\begin{equation}\label{12-7}
    u_{\tau}(y,\xi)=e^{i\tau\xi\cdot y}a_{\tau}(y) = e^{i\tau\xi\cdot y}\left( \sum_{k=0}^N \frac{a_{k}(y)}{\tau^k}\right),
\end{equation}
where $\tau>0$ is a large parameter, and 
$N>0$ is a fixed integer large enough. 
The covector 
$\xi=(\xi_{0},\xi')=(\xi_{0},\xi_{1},...,\xi_{n})$ 
is required to be light-like, i.e., 
$|\xi_{0}|^2=|\xi'|^2:=|\xi_{1}|^2+\cdots+|\xi_{n}|^2$.
The inner product is defined by $\xi\cdot y:=\xi_0t+\sum_{i=1}^n\xi_ix_i$.
We emphasize the uniformity of this construction, as we 
aim to keep track of the constants involved, which are essential for 
the subsequent stability estimates.

We denote by $\xi^\sharp:=(-\xi_{0},\xi')$ the vector associated with 
$\xi$ under the Minkowski metric. Let 
$y^0=(t^0,x^0)\in\mathbb{R}^{1+n}$. The 
amplitudes $a_{k}$, for $k=0,1,...,N$, will be constructed such that 
$u_{\tau}$ satisfies the wave equation  $\square u = 0$ module a remainder term 
vanishing as $\tau\to\infty$. The ansatz is supported in a neighborhood 
of the line 
\begin{equation}\label{ex12}
    \gamma_{y^0,\xi}(s):=s\xi^\sharp+y^0=(-s\xi_{0}+t^0,s\xi'+x^0), 
    \quad\forall s\in\mathbb{R}.
\end{equation}

Since $\xi$ is light-like, the wave operator acting on the ansatz yields
\begin{equation}\label{ex13}
    \square(e^{i\tau\xi\cdot y}a_{\tau})
    =e^{i\tau\xi\cdot y}(-2i\tau\mathcal{T}_{\xi}a_{\tau}+\square a_{\tau}),
\end{equation}
where the transport operator is defined as
$\mathcal{T}_{\xi}:=-\xi_{0}\partial_{t}+\sum_{l=1}^{n}\xi_{l}\partial_{x_{l}}$. 
The amplitudes $a_{k}$ are constructed such that
% is guided by the requirement 
% that the right-hand side of (\ref{ex13}) vanishes in powers of $\tau$. 
% In particular, this leads to the transport equation
\begin{equation}\label{ex14}
    \mathcal{T}_{\xi}a_{0}=0,
\end{equation}
and
\begin{equation}\label{ex18}
    -2i\mathcal{T}_{\xi}a_{k}+\square a_{k-1}=0,\quad k\ge1.
\end{equation}
Moreover, if a vector $\omega\in\mathbb{R}^{1+n}$ satisfies 
$\xi^\sharp\cdot\omega=0$, then for any function $\chi\in C^1(\mathbb{R})$, 
we have
\begin{equation*}
    \mathcal{T}_{\xi}(\chi(\omega\cdot(y-y^0)))=0.
\end{equation*}

We select 
\begin{equation}\label{ex15}
    \frac{\xi'}{|\xi'|}, w_{1}', w_{2}',..., w_{n-1}' 
\end{equation}
to form an orthonormal basis of $\mathbb{R}^{n}$ with respect to 
the Euclidean metric, and define $w_{l}:= (0,w_{l}')$. Note that 
$\xi^\sharp\cdot\omega_{l}=0$, and
\begin{equation*}
    \{\gamma_{y^0,\xi}(s)|s\in\mathbb{R}\} = 
    \{y\in\mathbb{R}^{1+n}| \xi\cdot(y-y^0)=\omega_{1}\cdot(y-y^0)=
       \cdots = \omega_{n-1}\cdot(y-y^0)=0\}.
\end{equation*}
Let $\delta>0$, and take $\chi_\delta\in C_c^\infty((-\delta,\delta))$. 
We define the first order term of the amplitudes as
\begin{equation}\label{aj0}
    a_{0}(y) = \chi_\delta(\xi\cdot(y-y^0))
    \prod_{l=1}^{n-1}\chi_\delta(w_{l}\cdot(y-y^0)).
\end{equation}
Then $a_{0}$ satisfies the transport equation (\ref{ex14}), and we have 
the support inclusion
\begin{equation}\label{ex17}
    \mathrm{supp}(a_{0}(t,\cdot))\subset H(t,\delta),\quad\forall t\in\mathbb{R},
\end{equation}
where $H(t,\delta)\subset\mathbb{R}^n$ denotes the hypercube of side 
length $2\delta$, centered at the point $x\in\mathbb{R}^n$ such that 
$(t,x)=\gamma_{y^0,\xi}(s)$ for some $s\in\mathbb{R}$, and with edges 
aligned with the directions as $\xi^\sharp$. 
% \textbf{[Q19: should not be aligned with $\xi'$.]} 
The subsequent terms $a_{k}$ for $k\ge1$ 
are defined inductively by solving the transport equations (\ref{ex18}).
% \begin{equation}\label{ex18}
%     -2i\mathcal{T}_{\xi}a_{k}+\square a_{k-1}=0.
% \end{equation}
Imposing vanishing initial conditions on the hyperplane
$\Sigma_{y^0,\xi}=\{y\in\mathbb{R}^{1+n}|\xi^\sharp\cdot(y-y^0)=0\}$,
we obtain the solution 
\begin{equation}\label{12-9} 
    a_{k}(s\xi^\sharp+y) =\frac{1}{2i}\int_0^s
    (\square a_{k-1})(\tilde{s}\xi^\sharp+y)
    d\tilde{s},
\end{equation}
for $s\in\mathbb{R}$, $y\in\Sigma_{y^0,\xi}$. By induction from 
(\ref{ex17}), it follows that
$$
\mathrm{supp}(a_{k}(t,\cdot))\subset 
H(t,\delta),
$$
and thus the entire ansatz $u_{\tau}$ remains localized near 
$\gamma_{y^0,\xi}$. 

For convenience in the construction, we define the smooth cutoff function
\begin{equation*}
    \begin{split}
        \psi(y):=\begin{cases}
            e^\frac{1}{|y|^2-1}, & |y|<1,\\
            0, &|y|\ge1,
        \end{cases}
    \end{split}
\end{equation*}
and normalize it by setting
$\alpha(y):=\frac{\psi(y)}{\int_{\mathbb{R}^{1+n}}\psi(y)dy}$.
Then, for $\delta>0$, we define the scaled cutoff function as
\begin{equation}\label{cutoff}
    \chi_\delta(y):=\alpha \left(\frac{y}{\delta}\right). 
\end{equation}
This completes the construction of the geometric optics solutions 
$u_{\tau}(y)$.% Emphasizing the dependence of 
%(\ref{ex24-1}) on $y_j$ and $\xi_j$, we write
%\begin{equation}\label{ex24}
%    f_{y_j,\xi_j}=f_j.
%\end{equation}

Moreover, equations (\ref{12-7}) and (\ref{ex13}) imply
\begin{equation*}
        \square u_{\tau} 
            = e^{i\tau\xi\cdot y}\left( -2i\tau\mathcal{T}_{\xi}a_{0}(y)
                +\sum_{k=1}^{N}\frac{-2i\mathcal{T}_{\xi}a_{k}(y)+\square a_{k-1}(y)}{\tau^{k-1}}
                + \frac{\square a_{N}(y)}{\tau^N} \right).
\end{equation*}
In view of the transport equations (\ref{ex14}) and (\ref{ex18}), we 
conclude that 
\begin{equation*}
    \square u_{\tau} = e^{i\tau\xi\cdot y}
        \frac{\square a_{N}(y)}{\tau^N}.
\end{equation*}

Next, recalling the geometric optics ansatz
(\ref{12-7}) and the definition of $a_{0}$ in (\ref{aj0}),
we obtain the estimate:
\begin{equation*}
    \left|\frac{d}{dy} a_{0}(y)\right|
    \le \bar{C}|\xi| \frac{1}{\delta},
\end{equation*}
where the constant $\bar{C}$ depends only on 
the profile of the cutoff function $\chi_1$. Moreover, $\bar{C}$
admits uniform positive lower and upper bounds independent of all 
$y^0\in(0,T)\times\overline{\Omega}$ 
and all light-like vectors $\xi\in \{(\xi_0,\xi')\ |\ \xi'\in \mathbb{R}^{n},\ |\xi_0|=|\xi'|\}$. 
We use $\bar{C}$ to emphasize this uniformity property.
Unless otherwise stated, we will continue to use this notation to 
indicate that such constants admit uniform positive lower and upper 
bounds.

For $k>0$, applying the coordinate transform 
$y=s\xi^\sharp+z$ to (\ref{12-9}), we obtain
\begin{equation*}
    a_{k}(y)=a_{k}(y,z) = a_{k}(s\xi^\sharp+z,z) 
    = \frac{1}{2i} \int_0^\frac{|y-z|}{|\xi^\sharp|} 
    (\square a_{k-1}) (\tilde{s}\xi^\sharp+z) d\tilde{s},
\end{equation*}
where $|\xi^\sharp|$ denotes the Euclidean norm of $\xi^\sharp$.
It follows that
\begin{equation*}
    \left|\frac{d}{dy} a_{k}(y)\right|
    \le \bar{C}\frac{1}{|\xi|}\left| \square a_{k-1}(y)\right| 
    \le \bar{C}\frac{1}{|\xi|^k}\left|\frac{d^{k+1}}{dy^{k+1}} a_{0}(y)\right| 
    \le\bar{C}|\xi| \frac{1}{\delta^{k+1}}.
\end{equation*}
Similarly, for any $l\in\mathbb{N}^+$, we have
\begin{equation}\label{ajkl}
    \left|\frac{d^l}{dy^l} a_{k}(y)\right|
    \le\bar{C}|\xi|^l \frac{1}{\delta^{k+l}}.
\end{equation}
We now provide the following estimate
\begin{equation*}
    \begin{split}
        \left|\frac{d^k}{dy^k}(\square u_{\tau})\right| &= 
            \left|\frac{1}{\tau^N}
            \sum_{l=0}^{k}(e^{i\tau\xi\cdot y})^{(l)}
            (\square a_{N}(y))^{(k-l)}\right| \\
          &\le \bar{C}|\xi|^{k+2} \frac{1}{\tau^N}
            \sum_{l=0}^{k}\tau^l \delta^{-N-k-2+l} \\
          &\le \bar{C}|\xi|^{k+2} \delta^{-N-2}\tau^{-N+k},
    \end{split}
\end{equation*}
where we have used the assumption $\tau\delta>1$. 
Therefore, the error estimates for the geometric optics solutions are
\begin{equation}\label{12-8}
\begin{split}
    ||\square u_{\tau}||_{C^k((0,T)\times\mathbb{R}^n)}
        &\le\bar{C}|\xi|^{k+2}\delta^{-N-2}\tau^{-N+k}.
\end{split}
\end{equation}

\section{The Alessandrini-PIE Type Identity}\label{se3}

Following the well-posedness of both the 
NtD map and its first-order linearization, a standard follow-up approach 
for inverse coefficient problems involves deriving the 
so-called Alessandrini-type identity, analogous to the one introduced for inverse coefficient problems of elliptic equations in \cite{Aless1}. 
This identity establishes a crucial link between boundary measurements and the unknown interior properties of the coefficient.

% For the problem considered in this work, the primary 
% challenge arises from the nonlinearity present in the NtD map, 
% which will be addressed in the following subsection.

% In \cite{LassasP8}, the Alessandrini-type identity is 
% derived via the 
% higher order linearization of the small boundary data 
% for the wave equation. In contrast, we derive the identity 
% here using the first order 
% linearization of the small coefficient function. 
We introduce the auxiliary (or test) function $\varphi$, 
which satisfies 
\begin{equation}\label{f0}
    \begin{cases}
        \square \varphi(t,x) = 0, & \mathrm{in}\ (0,T)\times\Omega, \\
        \partial_\nu \varphi(t,x) = f_0, & \mathrm{on}\ (0,T)\times\partial\Omega, \\
        \varphi(T,x)=\partial_t \varphi(T,x)=0, & x\in\Omega,
    \end{cases}
\end{equation}
with its boundary value $f_0$ to be determined in the subsequent proof. 

Multiplying equation (\ref{11-5}) by the 
test function $\varphi$ and integrating over $(0,T)\times\Omega$, 
and noting that 
$\partial_\nu u^{(1)}|_{(0,T)\times\partial\Omega}=0$, we obtain 
the Alessandrini-type identity for the linearized NtD map 
$\Lambda_q'$:
\begin{equation}\label{2.12}
    \int_0^T\int_\Omega q(u^{(0)})^m\varphi dxdt
    =-\int_0^T\int_{\partial\Omega}\partial_\nu\varphi\Lambda_q'fdSdt.
\end{equation}

We emphasize that in the case of nonlinearity with \( m \ge 2 \), whether the Alessandrini type identity alone is sufficient to prove stability remains unclear to be demonstrated in \cite{LuSaloXu} for the inverse coefficient problem associated with Helmholtz-type semilinear equations. To further refine the above equality, we employ the Principle of Inclusion-Exclusion (PIE) from combinatorics, which has proven effective for the inverse coefficient problem of Helmholtz-type semilinear equations with a general nonlinearity index \( m \geq 2 \) in \cite{Zou}. In particular, the following PIE equality is essential.

%
%
%multiple detect functions $u_j^{(0)}$ are required to 
%derive a stability estimate, which is essential for 
%stable reconstruction. Motivated 
%by the strategy for $m=2$ presented in Theorem 3.1 of \cite{LuSaloXu}, 
%we establish 
%an Alessandrini-type identity involving multiple detect 
%functions for the general case
%$m\ge2$, which plays a central role in our analysis. The key 
%ingredient 
%is the following identity, derived from the Principle of 
%Inclusion-Exclusion (PIE) in combinatorics \cite{Zou}.

\begin{Lem}\label{Lem:1}\cite{Zou}
Let $m\in\mathbb{N}^+$, and define the index set $U:=\{1,2,\dots,m\}$. Let $S$ be a subset of $U$. Then, for any given set of numbers or functions $\{w_j\}_{j=1}^m$, the following identity holds:
    \begin{equation}\label{2.13}
        \prod_{j\in U}w_j=\frac{1}{m!}\sum_{\emptyset\subsetneqq S\subseteq U }(-1)^{|U\setminus S|}\left(\sum_{j\in S}w_j\right)^m.
    \end{equation}
    Here, $U\setminus S$ is the relative complement of $S$ in $U$, 
    and $|\cdot|$ is the cardinality of a set.
\end{Lem}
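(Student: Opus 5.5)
We expand each power on the right-hand side of (\ref{2.13}) by the multinomial theorem and then count, for each monomial, the net coefficient produced by the alternating sum over $S$. Since the identity is purely algebraic and only uses commutativity and multiplication, it holds for numbers and for functions alike. It therefore suffices to verify it in the commutative polynomial ring $\mathbb{Q}[w_1,\dots,w_m]$.

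First, for a fixed nonempty $S\subseteq U$, write
\[
\Bigl(\sum_{j\in S}w_j\Bigr)^m=\sum_{\alpha}\binom{m}{\alpha}w^\alpha ,
\]
where the sum runs over multi-indices $\alpha\in\mathbb{N}^m$ with $|\alpha|=m$ and $\mathrm{supp}(\alpha):=\{j:\alpha_j>0\}\subseteq S$. Exchanging the order of summation, the coefficient of $w^\alpha$ in the right-hand side of (\ref{2.13}), without the factor $1/m!$, is
\[
\binom{m}{\alpha}\sum_{\mathrm{supp}(\alpha)\subseteq S\subseteq U}(-1)^{|U\setminus S|}.
\]
The restriction $S\neq\emptyset$ is automatic here, since $|\alpha|=m\ge1$ forces $\mathrm{supp}(\alpha)\neq\emptyset$.

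Second, evaluate the inner sum. Put $A=\mathrm{supp}(\alpha)$ and $S=A\cup B$ with $B\subseteq U\setminus A$; then $|U\setminus S|=|U\setminus A|-|B|$. The sum becomes
\[
\sum_{B\subseteq U\setminus A}(-1)^{|U\setminus A|-|B|}=(1-1)^{|U\setminus A|},
\]
which vanishes unless $A=U$, in which case it equals $1$. This is exactly the inclusion--exclusion cancellation, and it is the only step with any content.

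Finally, $A=U$ together with $|\alpha|=m$ forces $\alpha=(1,\dots,1)$. The multinomial coefficient for this $\alpha$ is $m!/(1!\cdots1!)=m!$, so after the prefactor $1/m!$ the right-hand side reduces to $w_1\cdots w_m$, which is the left-hand side.

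I expect no real obstacle. The only care needed is the bookkeeping in the exchange of summations, together with the remarks that $S=\emptyset$ contributes nothing and that the argument needs only a commutative ring, such as pointwise products of functions.
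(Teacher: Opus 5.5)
Your proof is correct and complete. The paper gives no proof of this lemma of its own; it quotes it from \cite{Zou}. Your argument is precisely the inclusion--exclusion mechanism the lemma's name refers to: the multinomial expansion, the exchange of the sums over $S$ and $\alpha$, and the cancellation $\sum_{B\subseteq U\setminus A}(-1)^{|U\setminus A|-|B|}=(1-1)^{|U\setminus A|}$, which leaves only $\alpha=(1,\dots,1)$ with coefficient $m!$.

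One small precision: the identity divides by $m!$, so your closing remark that it ``needs only a commutative ring'' should read ``any commutative $\mathbb{Q}$-algebra''. Alternatively, state the identity multiplied through by $m!$. Your reduction to $\mathbb{Q}[w_1,\dots,w_m]$ already handles this correctly, and for the real- or complex-valued functions used in Proposition~\ref{Prop:2} the condition holds automatically.
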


Similar to the approach of \cite{Zou} for Helmholtz-type semilinear equations, we therefore derive the following Alessandrini-PIE type identity for our semilinear wave equation (\ref{2-1}).
\begin{Prop}\label{Prop:2}
    Let $m\in\mathbb{N}$, $m\ge2$, define the index sets 
    $U:=\{ 1,2,...,m\}$ and $\emptyset\ne S\subseteq U$. The auxiliary 
    function $\varphi$ is defined in (\ref{f0}). 
    Let $u_j^{(0)}$ be the solution to (\ref{11-4}) with the 
    boundary value $\partial_\nu u_j^{(0)}=f_j$, 
    $(t,x)\in(0,T)\times\partial\Omega$, and $u_S^{(0)}$ be the solution to 
    (\ref{11-4}) with the boundary value $\partial_\nu u_S^{(0)}= \sum_{j\in S} f_j$, $(t,x)\in(0,T)\times\partial\Omega$, then $u_S^{(0)}=\sum_{j\in S} u_j^{(0)}$, $(t,x)\in(0,T)\times\Omega$, $u_S^{(1)}$ is the solution to
    \begin{equation*}
        (I_S^{(1)})\begin{cases}
            \square u_S^{(1)} + q (u_S^{(0)})^{m} = 0, 
            & \mathrm{in}\ (0,T)\times\Omega, \\
        \partial_\nu u_S^{(1)} = 0, & \mathrm{on}\ (0,T)\times\partial\Omega ,\\
        u_S^{(1)}(0,x)=\partial_t u_S^{(1)}(0,x)=0, & x\in\Omega.
        \end{cases}
    \end{equation*}
    Given $m$ detect functions $\left\{ f_j = \partial_\nu 
    u_j^{(0)} |_{(0,T)\times\partial\Omega} \right\}_{j=1}^{m}$, 
    it holds that
    \begin{equation}\label{12-6}
        \int_{(0,T)\times\Omega} q \prod_{j\in U} u_j^{(0)} \varphi dxdt = -\frac{1}{m!}\sum_{S\subset U} (-1)^{|U\setminus S|} \int_{(0,T)\times\partial\Omega} \Lambda_q' \left( \sum_{j\in S} f_j\right) \partial_\nu \varphi dS_xdt.
    \end{equation}
\end{Prop}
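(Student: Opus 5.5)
The plan is to combine the single-datum Alessandrini-type identity (\ref{2.12}) with the PIE identity of Lemma \ref{Lem:1}, using the linearity of the unperturbed problem (\ref{11-4}).

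First, the claim $u_S^{(0)}=\sum_{j\in S}u_j^{(0)}$. Problem (\ref{11-4}) is linear in the Neumann datum and uniquely solvable in $E^s$ (this is the linear part of Theorem \ref{Thm:1}). The sum $\sum_{j\in S}u_j^{(0)}$ satisfies $\square=0$, has zero Cauchy data, and has normal derivative $\sum_{j\in S}f_j$. By uniqueness it therefore coincides with $u_S^{(0)}$.

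Next, for each nonempty $S\subseteq U$, I would apply (\ref{2.12}) with $f$ replaced by $\sum_{j\in S}f_j$. In this case $u^{(0)}=u_S^{(0)}$ and $u^{(1)}=u_S^{(1)}$, so
\begin{equation*}
\int_0^T\int_\Omega q\,(u_S^{(0)})^m\varphi\,dxdt=-\int_0^T\int_{\partial\Omega}\partial_\nu\varphi\,\Lambda_q'\Big(\sum_{j\in S}f_j\Big)dS_xdt.
\end{equation*}
To be safe, I would re-derive this identity by Green's formula. Multiply $\square u_S^{(1)}=-q(u_S^{(0)})^m$ by $\varphi$ and integrate by parts twice in $t$ and in $x$.
\begin{itemize}
\item The time boundary terms vanish because $u_S^{(1)}$ has zero data at $t=0$ and $\varphi$ has zero data at $t=T$.
\item The spatial terms reduce to $\int\partial_\nu u_S^{(1)}\varphi-\int u_S^{(1)}\partial_\nu\varphi$. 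The first of these is zero since $\partial_\nu u_S^{(1)}=0$.
\item The interior term $\int u_S^{(1)}\square\varphi$ vanishes.
\item Finally, $u_S^{(1)}|_{(0,T)\times\partial\Omega}=\Lambda_q'(\sum_{j\in S}f_j)$ by (\ref{11-3}).
\end{itemize}
The regularity $E^s$ with $s>(n+1)/2$ justifies these integrations and traces.

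Then I multiply each identity by $(-1)^{|U\setminus S|}/m!$ and sum over all nonempty $S\subseteq U$. On the left, linearity of the integral lets me move the sum inside. Lemma \ref{Lem:1}, applied pointwise with $w_j=u_j^{(0)}(t,x)$ and combined with $u_S^{(0)}=\sum_{j\in S}u_j^{(0)}$, gives
\begin{equation*}
\frac{1}{m!}\sum_{S}(-1)^{|U\setminus S|}(u_S^{(0)})^m=\prod_{j\in U}u_j^{(0)}.
\end{equation*}
This produces the left side of (\ref{12-6}). The right side is the corresponding signed sum, which is exactly the right side of (\ref{12-6}); the empty set contributes nothing since $\Lambda_q'(0)=0$ by $m$-homogeneity.

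The only real subtlety is justifying the integration by parts with the required regularity, and checking that the sign convention matches (\ref{2.12}); the combinatorial step is immediate from Lemma \ref{Lem:1}.
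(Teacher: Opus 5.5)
Your proposal is correct and follows essentially the same route as the paper. Both arguments use linearity of $(I_0)$ to get $u_S^{(0)}=\sum_{j\in S}u_j^{(0)}$, the Alessandrini-type identity (\ref{2.12}) for each $u_S^{(1)}$ with $u_S^{(1)}|_{(0,T)\times\partial\Omega}=\Lambda_q'(\sum_{j\in S}f_j)$, and Lemma \ref{Lem:1} applied pointwise; the only differences are cosmetic (you sum the per-$S$ identities after applying them, whereas the paper applies PIE first, and you additionally re-derive (\ref{2.12}) by Green's formula and note that the empty set contributes nothing).
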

\begin{proof}
In view of Lemma \ref{Lem:1}, we denote by \( u_j^{(0)} \) the solution of the wave equation \( (I_0) \) in \eqref{11-4} satisfying the Neumann boundary condition
\begin{equation*}
    \partial_\nu u_j^{(0)}=f_j\quad\mathrm{on}\ (0,T)\times\partial\Omega
\end{equation*}
for each $j\in U=\{1,2,...,m\}$, and denote $u_S^{(0)}$ as the 
combined solution of wave equation ($I_0$) in (\ref{11-4}) with 
Neumann boundary condition
\begin{equation*}
    \partial_\nu u_S^{(0)}=\sum_{j\in S}f_j\quad\mathrm{on}\ (0,T)\times\partial\Omega
\end{equation*}
for each non-empty subset $S\subseteq U$. By the linearity of 
wave equation (\ref{11-4}), we have
\begin{equation*}
    u_S^{(0)}=\sum_{j\in S}u_j^{(0)}\quad\mathrm{in}\ (0,T)\times\Omega.
\end{equation*}
Thus, by Lemma \ref{Lem:1}, we obtain
\begin{equation}\label{2.15}
    \begin{split}
    \int_{(0,T)\times\Omega}q\prod_{j\in U}u_j^{(0)}\varphi dxdt
    &=\frac{1}{m!}\sum_{\emptyset\subsetneqq S\subseteq U }(-1)^{|U\setminus S|}\int_{(0,T)\times\Omega}q\left(u_S^{(0)}\right)^m\varphi dxdt.
    \end{split}
\end{equation}

Therefore, denote $u_S^{(1)}$ as the corresponding solution of the 
linearized equation
\begin{equation}\label{2.16}
    (I_S^{(1)})\begin{cases}
        \square u_S^{(1)} + q(u_S^{(0)})^m = 0, & \mathrm{in}\ (0,T)\times\Omega, \\
        \partial_\nu u_S^{(1)} = 0, & \mathrm{on}\ (0,T)\times\partial\Omega, \\
        u_S^{(1)}(0,x)=\partial_t u_S^{(1)}(0,x)=0, & x\in\Omega.
    \end{cases}
\end{equation}
Then, for each $S$, we obtain
\begin{equation}\label{2.17}
    \int_0^T\int_\Omega q(u_S^{(0)})^m\varphi dxdt
    =-\int_0^T\int_{\partial\Omega}u_S^{(1)}\partial_\nu\varphi dSdt,
\end{equation}
by the Alessandrini type identity (\ref{2.12}). Following the 
definition of the linearized NtD map $\Lambda_q'$, this means that, 
for each non-empty subset $S\subseteq U$,
\begin{equation}\label{2.18}
    u_S^{(1)}|_{(0,T)\times\partial\Omega}
    =\Lambda_q'(\partial_\nu u_S^{(0)}|_{(0,T)\times\partial\Omega})
    =\Lambda_q'\left(\sum_{j\in S}f_j\right).
\end{equation}

Combining (\ref{2.15}), (\ref{2.17}) and (\ref{2.18}), we have 
the Alessandrini-PIE type identity, which is a combinatoric formula of 
first-order linearization.
\end{proof}

\section{Boundary Stability}\label{se5}
As emphasized in the introduction, we do not assume compactness of the unknown coefficient \( q \). In this section, we specifically verify the stability of recovering the coefficient \( q \) at the boundary. 
Manifolds are introduced here for local flattening to obtain a simpler expression, facilitating the establishment of boundary stability. Only the results are needed to be used in Euclidean coordinates later.

Let \( y_0 \in [d,T-d] \times \Gamma := [d,T-d] \times \partial\Omega \).  
Denote by \( \mathcal{U} \) a sufficiently small neighborhood of \( y_0 \) in \( \mathbb{R}^{1+n} \),  
and let \( U := \mathcal{U} \cap [d,T-d] \times \Gamma \bigr] \).  
Assume \( (x^0, x^1, \dots, x^{n-1}) \) are local boundary coordinates on \( (0,T) \times \Gamma \) near \( y_0 \).  
We shall work in the boundary normal coordinates described in the lemma below; see \cite{StefanovYang} for further details.

\begin{Lem}\cite{StefanovYang}\label{Lem:3}
    There exists a neighborhood $N$ of $y_0$ in $(0,T)\times\Omega$, and a 
    diffeomorphism $\Psi$: $[(0,T)\times\Gamma]\cap N\times[0,T)\to(0,T)\times\overline{\Omega}$ such that
    \begin{itemize}
        \item $\Psi(x',0)=x'$ for all $x'\in[(0,T)\times\Gamma]\cap N$;
        \item $\Psi(x',x^n)=x'-x^n\nu$, where $\nu$ is the unit outer normal at the boundary point $x'$.
    \end{itemize}
\end{Lem}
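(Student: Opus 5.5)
The map is built from the unit inward normal along the boundary, and the whole content of the lemma is the inverse function theorem plus the tubular neighbourhood construction for the smooth hypersurface $(0,T)\times\Gamma$ in $\mathbb{R}^{1+n}$. Since $\partial\Omega$ is $C^\infty$, the outer unit normal $\nu(x')$ depends smoothly on $x'\in\Gamma$. In the spacetime setting we take the normal to be $(0,\nu)$, which is independent of $t$. This choice is consistent with $(0,T)\times\Gamma$ being a cylinder.

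\textbf{Step 1: the map and its differential.} On $[(0,T)\times\Gamma]\cap N'\times\mathbb{R}$, with $N'$ a small neighbourhood of $y_0$, define
\[
\Psi(x',x^n):=x'-x^n(0,\nu(x')).
\]
It is smooth. The first bullet, $\Psi(x',0)=x'$, holds by construction, and the second bullet is the definition. At a point $(x',0)$, the differential $d\Psi$ acts as the identity on $T_{x'}((0,T)\times\Gamma)$ and sends $\partial_{x^n}$ to $-(0,\nu(x'))$. This vector is transversal to the tangent space, so $d\Psi(x',0)$ is an isomorphism $\mathbb{R}^{n}\times\mathbb{R}\to\mathbb{R}^{1+n}$.

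\textbf{Step 2: local diffeomorphism.} By the inverse function theorem, $\Psi$ is a diffeomorphism from a neighbourhood of $(y_0,0)$ onto a neighbourhood of $y_0$. After shrinking, take the domain to be a product $([(0,T)\times\Gamma]\cap N)\times(-\epsilon,\epsilon)$. We must also check that $x^n\ge0$ corresponds to $\overline\Omega$. This follows because the normal is outward: for small $x^n>0$ the point $x'-x^n\nu$ lies in $\Omega$, and for $x^n<0$ it lies outside. This can be checked with a local graph representation of $\Gamma$ and injectivity of $\Psi$. Restricting to $x^n\ge0$ then gives a map into $(0,T)\times\overline\Omega$.

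\textbf{Step 3: the range $[0,T)$.} The statement writes $x^n\in[0,T)$, which I read as the normal parameter ranging over a small interval $[0,\epsilon)$. The main subtlety, and the step I expect to be the obstacle, is extending injectivity to a larger range of $x^n$. The natural approach is via the strict convexity of $\Omega$. I would first try to show that the inward normal segments from nearby boundary points do not meet within distance less than the minimal radius of curvature, i.e. that normal segments meet only beyond the focal distance. For the purposes of local flattening near $y_0$, however, the $\epsilon$-collar suffices. I would state the lemma in that form, taking $N$ to be the image $\Psi$ of the collar.

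Finally, I would observe that in these coordinates $\square$ takes the form $\partial_t^2-\partial_{x^n}^2-\Delta_{h(x^n)}+(\text{first-order terms in }\partial_{x^n})$, which is what the subsequent boundary stability argument uses.
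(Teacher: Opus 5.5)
Your argument is correct and is the standard collar construction via the inverse function theorem, applied with the time-independent normal $(0,\nu)$, which is also the Lorentzian normal to the timelike boundary, so $\Psi$ is just the normal exponential map; the paper gives no proof of its own, only the citation to Stefanov--Yang, and your argument is the standard one behind that reference. Your reading of $[0,T)$ as a short interval $[0,\epsilon)$ is the only tenable one, because the inward normal segment leaves $\overline{\Omega}$ after length at most $\mathrm{diam}(\Omega)$ and strict convexity makes inward normals focus (for a ball they all meet at the centre) rather than helping injectivity, so you should simply drop the convexity-based ``extension'' you contemplate in Step 3.
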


The coordinate system $(t=x^0,x^1,...,x^{n-1},\tau=x^n)$ gives us a way to define the boundary normal coordinates locally, and the 
    metric tensor $g$ takes the form
    \begin{equation*}
        g=-dx_0dx_0+g_{\alpha\beta}dx^\alpha dx^\beta+d\tau d\tau,
    \end{equation*}
    where $\alpha,\beta=1,...,n-1$.

\begin{Rmk}
    We use the same notation as in \cite{StefanovYang} for consistency. The only difference is that, in our setting, the first coordinate already represents the time variable. Therefore, unlike in \cite{StefanovYang}, the diffeomorphism is chosen to modify only the spatial coordinates while leaving the time coordinate unchanged.
\end{Rmk}

We denote the wave operator by $\square_g$ in local coordinates 
$y=(x^0,x^1,...,x^n)$, which takes the form
\begin{equation*}
    \square_g:=\frac{1}{\sqrt{|\mathrm{det}g|}}\left[-\partial_{0}(\sqrt{|\mathrm{det}g|}\partial_{0})+
        \partial_j(\sqrt{|\mathrm{det}g|}g^{jk}\partial_k)
        +\partial_n(\sqrt{|\mathrm{det}g|}\partial_n)
        \right],
\end{equation*}
where we have $(g^{jk})=(g_{\alpha\beta})^{-1}$, $j,k=1,...,n-1$. 
The boundary stability of the recovery of $q$ is given by the following theorem:

%\fix{Should we fix $y_0$ or $x_0$ to be the boundary point?  }
%\fix{Fixed.}
\begin{Thm}\label{Thm:2}
Let $y_0$ be a fixed point on the boundary $[d,T-d]\times\partial\Omega$. 
Given any \(0<\mu<1\), there exists a neighborhood \(U_0\subset (0,T)\times\partial\Omega\) of \(y_0\) such that the estimate
\[
\sup_{y\in U_0,\,|\gamma|\le 1} |\partial^\gamma q(y)|\le C\varepsilon^{{\frac{\mu}{4}\frac{1}{m(s+2)+2}}}
\]  
holds whenever \(q\) is bounded in a suitable \(C^k\)-norm near \(y_0\).  
Here the constant \(C>0\) depends on $m$ and the norm bound, and the required regularity order \(k\) is determined by \(k=k(\mu)\).
\end{Thm}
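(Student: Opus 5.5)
We follow the boundary-determination strategy of \cite{StefanovYang}, adapted to the Alessandrini--PIE identity of Proposition \ref{Prop:2}. We work in the boundary normal coordinates $(x^0,x^1,\dots,x^{n-1},\tau=x^n)$ of Lemma \ref{Lem:3} near $y_0$. In these coordinates we build highly oscillatory solutions concentrated in a thin layer near $y_0$. Each $u_j^{(0)}$, $j=1,\dots,m$, and the test function $\varphi$ of (\ref{f0}) will be an approximate solution of $\square_g u=0$ of the form
\[
u = e^{i\lambda\phi(y)}\,b(y)\,\chi\!\left(\frac{y'-y_0'}{h}\right)\chi\!\left(\frac{\tau}{h}\right),
\]
with $y'=(x^0,\dots,x^{n-1})$. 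The phase is $\phi=y'\cdot\eta'+\tau\,\sigma(y',\eta')$, where $\eta'$ is a covector with $|\eta_0|<|\eta''|_g$, so that it is spacelike or elliptic for the boundary. For such $\eta'$ the solution decays exponentially in $\tau$, with $\mathrm{Im}\,\sigma>0$; this is the evanescent/elliptic boundary regime of \cite{StefanovYang}. Alternatively one can take grazing-type solutions whose phases satisfy $\sum_j\phi_j+\phi_\varphi\approx 0$ at $y_0$.

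The phases are chosen so that the product $\prod_j u_j^{(0)}\varphi$ has total phase with vanishing real oscillation near $y_0$ and imaginary part $\sim \lambda\tau$. For example, take $\eta'_j$ for the $u_j$ and $-\sum_j\eta'_j$ for $\varphi$; this is possible since $m\ge2$. The error terms $\square_g u$ are $O(\lambda^{-N})$ in $C^k$, as in (\ref{12-8}). We correct them to exact solutions with the prescribed Neumann data by solving forward problems for the $u_j$ and a backward problem for $\varphi$, and the energy estimates of Theorem \ref{Thm:1} bound the corrections by $C\lambda^{-N+k}$. The Neumann data $f_j=\partial_\nu u_j^{(0)}$ then satisfy $\|f_j\|_{C^{s+1}}\lesssim \lambda^{s+2}$, and similarly $\|\partial_\nu\varphi\|\lesssim\lambda^{s+2}$ in a dual norm.

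We substitute into (\ref{12-6}). By $m$-homogeneity and the definition of $\varepsilon$, the right-hand side is bounded by $C\varepsilon\,\lambda^{(s+2)(m+1)}$. Since the $\lambda^{m(s+2)}$ part comes from the $m$ Neumann data and $\varphi$ contributes a smaller power, we aim for a bound of order $\varepsilon\lambda^{m(s+2)+2}$. The left-hand side, after the change of variables $\tau=\tilde\tau/\lambda$, equals
\[
\lambda^{-1}\Big(c\,q(y_0)+O(h)+O(\lambda^{-1}h^{-1})+O(\lambda^{-N'})\Big)\,h^{n},
\]
with $c\neq0$ computed from $\int_0^\infty e^{-a\tilde\tau}\,d\tilde\tau$ and the amplitudes. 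This gives
\[
|q(y_0)|\lesssim \varepsilon\lambda^{m(s+2)+2}+h+(\lambda h)^{-1}.
\]
Choosing $h=\lambda^{-1/2}$ and optimizing in $\lambda$ yields $|q(y)|\lesssim\varepsilon^{\frac14\frac{1}{m(s+2)+2}}$, uniformly for $y$ in a neighborhood $U_0$, since all constants are uniform, as emphasized for $\bar C$.

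For the first derivatives we interpolate. Since $q$ is bounded in $C^k$, the Gagliardo--Nirenberg/interpolation inequality $\|\partial q\|_{L^\infty}\lesssim\|q\|_{L^\infty}^{\mu}\|q\|_{C^{k}}^{1-\mu}$ holds with $k=k(\mu)$ large. This produces the exponent $\frac{\mu}{4}\frac{1}{m(s+2)+2}$, and the same interpolation also absorbs $\|q\|_{L^\infty}$ itself since $\mu<1$.

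The main obstacle is the construction of the boundary-localized solutions with exact Neumann data and control of the correction terms in the right norms. The decaying solutions must be made to satisfy $\square u=0$ in $(0,T)\times\Omega$ with zero Cauchy data at $t=0$ (respectively $t=T$ for $\varphi$). This requires the time cutoff $\chi((x^0-x^0_0)/h)$ to stay inside $[d,T-d]$, and the amplitude equation (an eikonal with complex phase) to be solved only to finite order in $\tau$, with the residual estimated via Theorem \ref{Thm:1}. The precise powers of $\lambda$ in the Neumann norms determine the exponent, so this bookkeeping is where the argument must be checked most carefully.
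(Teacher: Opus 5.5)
Your route differs from the paper's. The paper does not use the Alessandrini--PIE identity at the boundary. It takes one real-phase (hyperbolic-region) solution with data $f=-i\lambda\xi_n e^{i\lambda\phi}\chi$ and expands $u^{(1)}=e^{im\lambda\phi}\sum_j\lambda^{-j}a_j$. It reads $q|_{U_0}$ off the transport equation \eqref{qqq} for $a_1$ on $x^n=0$. It then gets $\partial_n q$ by differentiating \eqref{TEa1} and using the boundary relations $im\xi_n a_l=\partial_n a_{l-1}$ from \eqref{Ta1}; each further order halves the exponent, which is where the $\frac14$ comes from. Your Brown-type evanescent localization is a reasonable alternative for the boundary values of $q$, but as written it has two gaps.

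\textbf{The normal derivative.} The statement bounds $\partial^\gamma q$ for all $|\gamma|\le 1$, including $\partial_n q$. This is the paper's final step, and Step~3 later needs it for the $C^1$ extension of $q$ to $\Omega'$. Your argument only controls the trace $q(\cdot,0)$ on the hypersurface $x^n=0$. No interpolation inequality on $U_0$ can recover $\partial_n q$ from that trace and a $C^k$ bound; interpolation only gives tangential derivatives. You would have to go to the next order of the boundary-layer integral,
$\int_0^\infty q(y',\tau)e^{-a\lambda\tau}\,d\tau=\frac{q(y',0)}{a\lambda}+\frac{\partial_nq(y',0)}{a^2\lambda^2}+O(\lambda^{-3})$.
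You would then need to show that every error term of relative size $(\lambda h)^{-1}\gg\lambda^{-1}$ is either computed exactly or multiplied by the already small trace of $q$. You would also have to recheck that the resulting exponent for $\partial_n q$ still reaches $\frac{\mu}{4}\frac{1}{m(s+2)+2}$.

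\textbf{The construction.} With the tangential cutoff $\chi((y'-y_0')/h)$ in the elliptic regime there is no transport direction. So $\square_g u=e^{i\lambda\phi}\,O(\lambda h^{-1})$ in the layer $\tau\lesssim\lambda^{-1}$, not $O(\lambda^{-N})$ as you claim by analogy with \eqref{12-8}. Energy estimates of the type behind Theorem~\ref{Thm:1} do not see the microlocal ellipticity of this source. They only give a correction $w$ with $\|w(t)\|_{L^2}$ of the same order as the ansatz, and for all $t$ rather than only in the time window. The cross terms $\int q\,w_1\prod_{k\neq 1}u_k$ are then bounded only by about $h^{n-1/2}\lambda^{-1}$, which exceeds your main term $\lambda^{-1}h^n$.

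To close this you need a genuine boundary-layer parametrix $u=e^{i\lambda y'\cdot\eta'}\sum_{k\le K}B_k(y',\lambda\tau)$. Here $B_k=O((\lambda h)^{-k})$ solves ODEs in $\tilde\tau=\lambda\tau$, together with the complex eikonal solved to finite order in $\tau$. The residual is then $O(\lambda^2(\lambda h)^{-K})$, which energy estimates can absorb once $h=\lambda^{-1/2}$ and $K$ is large.

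\textbf{Minor point.} Your right-hand-side bound drops the factor $h^{-n/2}$ from $\|\partial_\nu\varphi\|_{L^2}\sim\lambda h^{n/2}$ against the normalization $\lambda^{-1}h^n$. This costs $\lambda^{n/4}$, which the target exponent still absorbs since $s>(n+1)/2$.
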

\begin{proof}
We choose the local coordinates %$(x',x^n)$ 
so that $(0,T)\times\Gamma$ locally is given by $x^n=0$, and 
the interior of $\Omega$ is given by $x^n>0$. 
Denote $y=(t,x^1,...,x^n)=(x^0,x^1,...,x^n)$, and choose
\begin{equation*}
    u_\lambda^{(0)}(y)=e^{i\lambda\phi(y,\xi')}a_\lambda(y,\xi')=e^{i\lambda\phi(y,\xi')}\sum_{j=0}^{N}\frac{1}{\lambda^j}a_{0j}(y,\xi')
\end{equation*} 
as the asymptotic solution to $\square u^{(0)}=0$, 
where $\xi=(\xi',\xi_n)=(-1,\xi_1,...,\xi_{n-1},\xi_{n})$.
% , and therefore $\phi|_{x^n=0}=-t+\varphi(x')=x\cdot\xi$. 
In $(0,T)\times\Omega$ near $y_0$, the phase function $\phi(y,\xi')$ satisfies the 
Eikonal equation, which, under the boundary normal coordinates,
can be written as
\begin{equation}\label{eikonal}
    -(\partial_0\phi)^2+g^{\alpha\beta}\partial_\alpha\phi\partial_\beta\phi+(\partial_n\phi)^2=0,%\quad\alpha,\beta=0,1,...,n-1,
\end{equation}
and when restricted on the boundary, can be written as
\begin{equation}\label{eikoCon}
    \phi|_{x^n=0}=y\cdot\xi|_{x^n=0}=x'\cdot\xi'=-x^0+x^1\xi_1+\cdots+x^{n-1}\xi_{n-1}.
\end{equation}
With the extra condition $\partial_\nu\phi|_\Gamma<0$, the equation
(\ref{eikonal}) is uniquely solvable with the boundary condition (\ref{eikoCon}). 
% \textbf{[Q22: the construction of the geometric optics is only local. Also what is $a_{0j}$? ]}
Moreover, we know that 
\begin{equation*}
    \partial_n\phi(x',0)=\xi_n(x',\xi')>0,
\end{equation*}
for any $(x',x^n)$ in a neighborhood of $y_0$, so
\begin{equation*}
    \xi_n(x',\xi')=\partial_n\phi(x',0)=\sqrt{1-g^{\alpha\beta}\xi_\alpha\xi_\beta}.
\end{equation*}
Notice that the choice of the sign of $\xi_n$ makes $\xi$ a light-like future-pointing covector, pointing into $(0,T)\times\Omega$. Now assume $y_0\in U\subset (0,T)\times\partial\Omega$.
We choose
\begin{equation*}
    f(y)=-i\lambda\xi_ne^{i\lambda\phi(y,\xi')}\chi(y,\xi)
\end{equation*}
as the relative boundary value, where $\chi$ is a smooth cutoff function with support $U$, and $\chi=1$ on a subset $U_0\Subset U$.
In $(0,T)\times\Omega$ near $y_0$, the amplitudes $a_{00}$ and $a_{0j}$, 
$j=1,2,...,N$, solve the following transport equations:
\begin{equation*}
    \begin{cases}
        \mathcal{T}a_{00}=0, &\mathrm{in}\ (0,T)\times\Omega,\\
        a_{00}=\chi(y,\xi), & x^n=0,
    \end{cases}
\end{equation*}
and
\begin{equation*}
    \begin{cases}
        2i\mathcal{T}a_{0j}=-\square_ga_{0(j-1)}, &\mathrm{in}\ (0,T)\times\Omega,\\
        i\xi_na_{0j}=-\partial_na_{0(j-1)}, & x^n=0,
    \end{cases}
\end{equation*}
where the operator $\mathcal{T}$ is defined as
\begin{equation*}
    \mathcal{T}:=-\partial_0\phi\partial_0+g^{jk}\partial_j\phi\partial_k
    +\partial_n\phi\partial_n+\square_g\phi.
\end{equation*}
The amplitudes $a_{00}$ and $a_{0j}$, 
$j=1,2,...,N$ are supported in a neighborhood of the characteristics issued from
$y_0$ in the codirection $\xi(y_0)$. 
Note that $\partial_0\phi=-1=:\xi_0$, $\partial_\alpha\phi=\xi_\alpha$ and $\partial_n\phi=\xi_n$. 
% Let ${\xi^0}'$ be a 
% future-pointing timelike covector in $T_{x_0}^*\partial M$ at $x_0$.
As a result, in some
neighborhood of $y_0$, we have $u_\lambda^{(0)}$ solves 
$\square u^{(0)}_\lambda=O(\lambda^{-N})$, $a_\lambda|_{t=0}=0$ and 
$\partial_\nu u^{(0)}|_{(0,T)\times\partial\Omega}=f$.

We similarly construct the geometric optics solution to the following linearized 
system
\begin{equation*}
    (I_1)\begin{cases}
        \square_g u^{(1)} + q(u^{(0)})^m = 0, & \mathrm{in}\ (0,T)\times\Omega, \\
        \partial_\nu u^{(1)} = 0, & \mathrm{on}\ (0,T)\times\partial\Omega, \\
        u^{(1)}|_{t=0}=\partial_t u^{(1)}|_{t=0}=0, & (x',x^n)\in\Omega,
    \end{cases}
\end{equation*}
% where $u^{(0)}$ is the solution to 
% the unperturbed problem
% \begin{equation*}
%     (I_0)\begin{cases}
%         \square u^{(0)}= 0, & \mathrm{in}\ (0,T)\times\Omega \\
%         \partial_\nu u^{(0)} = f, & \mathrm{on}\ (0,T)\times\partial\Omega \\
%         u^{(0)}(0,x)=\partial_t u^{(0)}(0,x)=0. & x\in\Omega
%     \end{cases}
% \end{equation*}
%
% where we denote $y=(t,x)=(t,x_1,...,x_n)=(x_0,x_1,...,x_n)$, and 
% $u^{(0)}$ takes the form 
% $u^{(0)}(y)=e^{i\lambda \xi\cdot y}$, where 
% $\xi=(-1,\xi_1,...,\xi_{n})$, and
% construct a geometric optics approximation of $u^{(1)}$ 
locally of the form
\begin{equation*}
    u_\lambda^{(1)}(y):=e^{im\lambda\phi(y,\xi')}\sum_{j=0}^{N}\frac{1}{\lambda^j}a_j(y,\xi')+\mathcal{O}(\lambda^{-N-1}).
\end{equation*}
% where the boundary locally is given by $U:=\{x_n=0\}$. 
% near $x_0$ in $\Gamma$.
Then near $y_0$, according to ($I_1$), we compute that
% \begin{equation*}
%     \begin{split}
%     \square(e^{i\lambda\phi}a_\lambda)&=e^{i\lambda\phi}\square a_\lambda+i\lambda e^{i\lambda\phi}a_\lambda\square\phi-\lambda^2e^{i\lambda\phi}a_\lambda\left((\partial_t\phi)^2-\sum(\partial_{x_i}\phi)^2\right)\\
%         &\quad +2i\lambda e^{i\lambda\phi}\left(\partial_t\phi\partial_t a_\lambda-\sum\partial_{x_i}\phi\partial_{x_i}a_\lambda\right),        
%     \end{split}
% \end{equation*}
% and the boundary condition
% \begin{equation*}
%     \partial_\nu(e^{i\lambda\phi}a_\lambda)=e^{i\lambda\phi}\partial_\nu a_\lambda+i\lambda e^{i\lambda\phi}a_\lambda\partial_\nu\phi.
% \end{equation*}
\begin{equation*}
    \begin{cases}
        \sum_{j=0}^N\frac{1}{\lambda^j}\square_g a_j+2im\lambda\sum_{j=0}^{N}\frac{1}{\lambda^j}\mathcal{T} a_j+\mathcal{O}(\lambda^{-N-1})=-qa_\lambda^m, & \mathrm{in}\ (0,T)\times\Omega, \\
        im\lambda\xi_n\sum_{j=0}^{N}\frac{1}{\lambda^j}a_j=\sum_{j=0}^{N}\frac{1}{\lambda^j}\partial_{n}a_j, & x^n=0,
    \end{cases}
\end{equation*}
% \textbf{[Q22: the above equation can hardly hold exactly, probably only up to $\mathcal{O}(\lambda^{-N-1})$.]}
where the amplitudes $a_0$, and $a_l$, $l=1,2,...,$ solve the following 
transport equations:
\begin{equation}\label{Ta0}
    \begin{cases}
        \mathcal{T} a_0=0, & \mathrm{in}\ (0,T)\times\Omega, \\
        a_0=0, & x^n=0,
    \end{cases}
\end{equation}
and
\begin{equation}\label{Ta1}
    \begin{cases}
        \mathcal{T} a_l=\frac{1}{2im}(-qA_l-\square_g a_{l-1}), & \mathrm{in}\ (0,T)\times\Omega, \\
        im\xi_n a_l=\partial_{n}a_{l-1}, & x^n=0,
    \end{cases}
\end{equation}
% and
% \begin{equation}\label{Taj}
%     \begin{cases}
%         \mathcal{T} a_l=-\frac{1}{2im}\square_g a_{l-1}, & \mathrm{in}\ (0,T)\times\Omega \\
%         im\xi_n a_l=\partial_{n}a_{l-1}, & x^n=0
%     \end{cases}
% \end{equation} 
where $A_l$ is locally an $m$-degree homogeneous polynomial depending on $a_{00},...,a_{0(l-1)}$, eg., $A_1=a_{00}^m$.
The formal linearized NtD map in the boundary normal 
coordinates $(x',x^n)$ is given by
\begin{equation*}
    \left.\Lambda_q'f\right|_{U_0}=e^{im\lambda\phi(y,\xi')}\sum_{j=0}^{N}\frac{1}{\lambda^j}a_j(y,\xi)+\mathcal{O}(\lambda^{-N-1}).
\end{equation*}
% The expression for $\Lambda_{\tilde{q}}'f$ is similar, with $a_j$ replaced by 
% $\tilde{a}_j$. Then
% \begin{equation*}
%     (\Lambda_q'-\Lambda_{\tilde{q}}')f=e^{im\lambda\xi\cdot y}\sum_{j=0}^{N}\frac{1}{\lambda^j}(a_j-\tilde{a}_j)+\mathcal{O}(\lambda^{-N-1}).
% \end{equation*}
By the transport equation for $a_0$, we obtain $a_0=0$
% \begin{equation*}
%     \|a_0\|_{L^\infty(U)}\le C\left(\frac{1}{\lambda}+\varepsilon\|f\|_{C^{s+1}(U)}^m\right),
% \end{equation*}
% Thus, taking the limit $\lambda\to\infty$ and noticing that $f$ is bounded yields
% \begin{equation*}
%     \|a_0\|_{L^\infty(U)}\le C\varepsilon.
% \end{equation*}
and
\begin{equation*}
    \|a_1\|_{L^\infty(U)}\le C\left(\frac{1}{\lambda}+\lambda^{m(s+2)+1}\varepsilon\right).
\end{equation*}
Choose $\lambda=\varepsilon^{-\frac{1}{m(s+2)+2}}$ to minimize the right-hand side. Then
\begin{equation*}
    \|a_1\|_{L^\infty(U)}\le C\varepsilon^{\frac{1}{m(s+2)+2}}.
\end{equation*}
Repeating this type of argument with interpolation and Sobolev embedding theorem 
will establish the estimate for $a_j$ for all $j\ge0$:
$$
\|a_j\|_{L^\infty(U)}\le C\varepsilon^{\frac{1}{2^{j-1}}\frac{1}{m(s+2)+2}},\quad
\|a_j\|_{C^1(U)}\le C\varepsilon^{\frac{\mu}{2^{j-1}}\frac{1}{m(s+2)+2}},
$$
for any $0<\mu<1$.
Consider the transport equation of $a_0$ in (\ref{Ta0}). It follows from the 
boundary condition that $a_0=0$ and $\partial_0 a_0=\partial_\alpha a_0=\partial_n a_0=0$. 
Moreover, $g^{nj}=\delta^{nj}$ in the semigeodesic 
coordinates; thus, by the boundary condition of $a_1$ in 
(\ref{Ta1}), $\partial_0 a_1=\partial_\alpha a_1=0$ on 
$(0,T)\times\partial\Omega$. 
The transport 
equation for $a_1$ is given by
\begin{equation}\label{TEa1}
    2im\mathcal{T}a_1+\square_ga_0=-qA_1,
\end{equation}
which when restricted on $U$, becomes
\begin{equation}\label{qqq}
    2im(\partial_n\phi\partial_n a_1+\square_g\phi a_1)=-qA_1.
\end{equation}
The boundary condition of $a_2$ in (\ref{Ta1}) yields
\begin{equation*}
    \begin{split}
    \|\partial_na_1\|_{L^\infty(U)}&=C\|a_2\|_{L^\infty(U)}\le C\varepsilon^{\frac{1}{2}\frac{1}{m(s+2)+2}},\\
    \|\partial_\alpha a_2\|_{L^\infty(U)}&=C\|\partial_na_1\|_{C^1(U)}\le C\varepsilon^{\frac{\mu'}{2}\frac{1}{m(s+2)+2}},
\end{split}
\end{equation*}
for any $0<\mu'<1$, and we take $\mu'=1/2$. Therefore, (\ref{qqq}) implies
$$
\|q\|_{L^\infty(U_0)}\le C(\|\partial_na_1\|_{L^\infty(U)}+\|a_1\|_{L^\infty(U)})\le C\varepsilon^{\frac{1}{2}\frac{1}{m(s+2)+2}},
$$
% where we have assumed that $A_1$ is non-vanishing.
% \textbf{[Do you need $A_1$ to be non-vanishing?]} 
because $a_{00}=1$ on $U_0$. Similarly, the transport equation for $a_2$ becomes
\begin{equation*}
    2im(-\partial_0\phi\partial_0a_2+g^{\beta\alpha}\partial_\beta\phi\partial_\alpha a_2
    +\partial_n\phi\partial_n a_2+\square_g\phi a_2)=-\square_g a_1,
\end{equation*}
and the boundary condition of $a_3$ in (\ref{Ta1}) yields
\begin{equation*}
    \|\partial_n a_2\|_{L^\infty(U)}=C\|a_3\|_{L^\infty(U)}\le C\varepsilon^{\frac{1}{4}\frac{1}{m(s+2)+2}},
\end{equation*}
which implies
$$
\|\partial_n^2a_1\|_{L^\infty(U)}
\le C(\|a_1\|_{C^2(U)}+\|a_2\|_{L^\infty(U)}+\|a_2\|_{C^1(U)}+\|\partial_na_2\|_{L^\infty(U)})
% \le C(\|\partial_\alpha a_2\|_{L^\infty(U)}+\|\partial_n a_2\|_{L^\infty(U)}+\|a_2\|_{L^\infty(U)})
\le C\varepsilon^{\frac{1}{4}\frac{1}{m(s+2)+2}}.
$$
To obtain an expression of $\partial_n q$, we differentiate the transport equation
in (\ref{TEa1}) 
% \textbf{[Q23: can not differentiate (39).]} 
and evaluate it on $U$:
\begin{align*}
    2im(-&\partial_n\partial_0\phi\partial_0a_1-\partial_0\phi\partial_n\partial_0a_1\\
    &+\partial_ng^{jk}\partial_j\phi\partial_ka_1+g^{jk}\partial_n\partial_j\phi\partial_ka_1+g^{jk}\partial_j\phi\partial_n\partial_ka_1\\
    &+\partial_n^2\phi\partial_n a_1+\partial_n\phi\partial_n^2 a_1+\partial_n(\square_g\phi) a_1
+\square_g\phi \partial_na_1)+\partial_n\square_ga_0\\
=&-\partial_n qA_1-q\partial_nA_1, 
\end{align*}
which implies
\begin{align*}
& \|\partial_nq\|_{L^\infty(U_0)} \\
& \quad \le C(\|\partial_n^2a_1\|_{L^\infty(U)}+\|\partial_na_1\|_{L^\infty(U)}+\|\partial_na_1\|_{C^1(U)}+\|a_1\|_{L^\infty(U)}+\|q\|_{L^\infty(U_0)}) \\
& \quad  \le C\varepsilon^{\frac{1}{4}\frac{1}{m(s+2)+2}}.
\end{align*}
\end{proof}
\begin{Rmk}
    We should also emphasize that the uniformity of the estimate is guaranteed by the compactness of \([d,T-d]\times\partial\Omega\).
\end{Rmk}
%\begin{Rmk}
%    Boundary stability is established without using the Alessandrini-PIE type identity, as boundary information is directly obtained via local coordinates. In contrast to the interior case—where the identity enables localization via intersecting various beams near a target point—boundary neighborhoods are accessible directly, making the identity unnecessary.
%\end{Rmk}

\section{Design of Boundary Conditions}\label{se6}
Before proving the main results in Theorem \ref{Thm:main}, we discuss the design of the boundary functions \( f_j \), for \( j = 0, 1, 2, \dots, m \), in two separate cases: \( m = 2 \) and \( m \ge 3 \). This design ensures that the solution to the linear wave equation (\ref{11-4}) approximates the geometric optics solution (\ref{12-7}), and allows us to extract information from a neighborhood of a line (when \( m = 2 \)) or a point (when \( m \ge 3 \)) within the interior medium, a notion that will be made precise later.

\subsection{Case $m=2$}

Owing to the finite speed of propagation for the wave equation, 
the inverse problem is inherently limited in the regions of 
$\mathbb{R}^{1+n}$ where information about the coefficient $q$ can be 
recovered. To address this constraint, we now specify the geometric 
and analytical conditions under which the inverse problem is solvable.

% there are limitations on the areas of $\mathbb{R}^{1+n}$ where we can 
% obtain information in the inverse problem. Regarding this, we introduce 
% the requirements that the setting of the problem should meet.

%We denote
%\begin{equation*}
%    d:=2\ \mathrm{inf}\{r>0|\Omega\subset B_r(x),\ \mathrm{for\ some}\ 
%    x\in\mathbb{R}^n\},
%\end{equation*}
%where $B_r(x)$ is the ball of radius $r$ centered at $x\in\mathbb{R}^n$. 
%By Jung's theorem, $\mathrm{diam}(\Omega)\le d\le\mathrm{diam}(\Omega)
%\sqrt{2n/(n+1)}$. We then assume that
%\begin{equation*}
%    T\ge 2d+2\lambda
%\end{equation*}
%for some given $\lambda>0$. Then let
%\begin{equation*}
%    t_1=d+\lambda\quad\mathrm{and}\quad t_2=T-d-\lambda.
%\end{equation*}
%The parameter $\lambda$ will be used to establish small neighborhoods 
%of the times $t=0$ and $t=T$. Note that $[t_1,t_2]\subset(0,T)$. 
%We only require $q$ satisfy:
%\begin{Def}[Admissible coefficients]
%    Given $s\ge0$, say that the coefficient function 
%    $q\in C_c^\infty(\mathbb{R}\times\Omega)$ is admissible if
%    \begin{equation*}
%        \mathrm{supp}(q)\Subset [t_1,t_2]\times\Omega.
%    \end{equation*} 
%\end{Def}
%This requirement ensures that any point in the support of $q$ can be 
%reached by sending waves from $(0,T)\times\partial\Omega$, and that 
%the corresponding measurements can be detected on 
%$(0,T)\times\partial\Omega$ as well. We say these points are detectable.

Let $T=\mathrm{diam}(\Omega)+2d$,
where $d>0$ is a small constant.
% consistent with the definition of the domain $\mathbb{D}$ in equation (\ref{regionD}). 
% To simplify the subsequent analysis,
% we will apply a suitable coordinate 
% transform that enables the vectors involved in our construction to take 
% on a more explicit form in the new 
% coordinate system, without altering the essential features of the proof. 
% We let 
% $\mathrm{supp}(q)\subset\Omega\subset B_r(0)$, where 
% $r=\mathrm{diam}(\Omega)/2$ and $B_r(0)$ denotes the ball centered at $0$ 
% whose radius is $r$. 
As in the previous section, let $\xi=(1,\xi')\in\mathbb{R}^{1+n}$ be a 
non-zero light-like vector, and assume $p'=(t_0,x_0)\in(0,T)\times\partial\Omega$, 
with $t_0=T-d$. 
Denote by $\nu$ the outward unit normal vector to the boundary $\partial\Omega$ 
at the point $x_0$, and assume $\langle\xi',\nu\rangle<0$. 
We proceed to construct a boundary term $f\in C^{s+1}((0,T)\times\partial
\Omega)$ with $s>(n+1)/2$, such that the corresponding solution to the 
linear wave equation (\ref{11-4}) is close to the geometric optics 
approximation (\ref{12-7}). 

We define an extended domain $\Omega'\supset\Omega$, which is a slightly 
larger open set containing $\overline{\Omega}$. 
Then, there exists a point $(t_0',x_0')\in\gamma_{p',\xi}$, such that 
$t_0'\ne t_0$ and 
$(t_0',x_0')\in[d,T-d]\times\partial\Omega$.
As a consequence, for sufficiently small $\delta>0$, we have
\begin{equation}\label{ali21}
    H([0,t_0-t_0'],\delta)\subset(0,T)\times\Omega',
\end{equation}
where $H([0,t_0-t_0'],\delta)$ denotes a $\mathbf{hypertube}$ in 
$\mathbb{R}^{1+n}$, of 
width $2\delta$, centered along the segment of the light ray 
$\gamma_{p',\xi}$ over the time interval $[0,t_0-t_0']$; that is, 
% it is 
% the set of points located within distance $\delta$ of the curve traced by
the axis of the hypertube is defined by
\begin{equation*}
    \left\{(t,x)\ |\ (t,x)=\gamma_{p',\xi}(t),\ t\in[0,t_0-t_0']\right\}.
\end{equation*}

% It follows from (\ref{ex17}) and (\ref{ali21}) that there exists a constant
% $\rho>0$ such that 
% $$
% 0<t_0'-\rho<t_0+\rho<T,
% $$ 
% and
% \begin{equation}\label{ex22}
%     \mathrm{supp}(u_\tau(t,\cdot))\subset(0,T)\times\Omega',\quad\forall t\in(t_0'-\rho,t_0+\rho).
% \end{equation}
% Next, we choose a non-negative function $\zeta\in C^\infty(\mathbb{R}^{1+n})$ 
% such that
% \begin{equation}\label{ex23}
%     \zeta(t,x)=\begin{cases}
%         1,&\quad t_0'\le t\le t_0\\
%         0.&\quad t<t_0'-\rho\ \mathrm{or}\ t>t_0+\rho
%     \end{cases}
% \end{equation}
We are now ready to define the boundary function. To emphasize its 
dependence on the point
$p'$ and the light-like vector $\xi$, we write
\begin{equation}\label{ex24}
    f_{p',\xi}=\partial_\nu u_\tau^{(0)}(p',\xi),\quad\mathrm{on}\ (0,T)\times\partial\Omega,
\end{equation}
which is of the form given by (\ref{12-7}) and constructed in Subsection \ref{se4}. 
% which follows from (\ref{ex22}) that
% \begin{equation*}
%     \begin{split}
%     \mathrm{supp}(f_{p',\xi})\subset(t_0'-\rho,t_0+\rho)\times\Omega'\subset(0,T)\times\Omega',\\
%     f_{p',\xi}=\partial_\nu u_\tau^{(0)},\quad\mathrm{when}\ t\in[t_0',t_0].        
% \end{split}
% \end{equation*}

We now proceed to construct the geometric optics solutions and 
the corresponding boundary data for the wave equation, following the above 
framework. 
Fix $t_0=T-d$ and for each $x_0\in\partial\Omega$, define $p'=(t_0,x_0)$. 
Let $\xi_0=(1,\xi_0')\in\mathbb{R}^{1+n}$, where 
$\xi_0'\in S^{n-1}$ is a unit spatial direction. Then define the vectors
\begin{equation}\label{xim2}
    \xi_1=\xi_2=-\frac{1}{2}\xi_0.
\end{equation}
These are also light-like vectors in $\mathbb{R}^{1+n}$, and satisfy
\begin{equation*}
    \xi_0+\xi_1+\xi_2=0.
\end{equation*}
Set $y_0=p'$, and let $y_1=y_2=(t_0',x_0')$, so that the three rays $\gamma_{y_0,\xi_0}, \gamma_{y_1,\xi_1}, \gamma_{y_2,\xi_2}$ coincide.
% \begin{equation*}
%     \gamma_{y_0,\xi_0}=\gamma_{y_1,\xi_1}=\gamma_{y_2,\xi_2},\quad\mathrm{for}\ t\in[0,t_0-t_0'].
% \end{equation*}
As a result, we obtain the relation
\begin{equation*}
    u_{1\tau}^{(0)}u_{2\tau}^{(0)} \varphi_\tau = a_{0\tau}a_{1\tau}a_{2\tau}.
\end{equation*}
We define the associated boundary data by
\begin{equation}\label{ex24-1}
    f_j:=f_{y_j,\xi_j},\quad j=0,1,2.
\end{equation}
% Thus for $j=0,1,2$,
% \begin{equation*}
%     \mathrm{supp}(f_j)\subset(t_0'-\rho,t_0+\rho)\times\Omega'\subset(0,T)\times\Omega'.    
% \end{equation*}
% As (\ref{ex22}), we know that
% \begin{equation*}
%     \mathrm{supp}(u_{1\tau}^{(0)}u_{2\tau}^{(0)} \varphi_\tau)\subset
%     H((-\rho,t_0-t_0'+\rho),\delta)\subset(0,T)\times\Omega',
% \end{equation*}
% and we denote the hypertube as
% \begin{equation*}
%     H:=H((-\rho,t_0-t_0'+\rho),\delta)\cap (0,T)\times\Omega.
% \end{equation*}
It is important to emphasize that in this case, the norms of the light-like
vectors satisfy $1\le|\xi_j|\le2$ for $j=0,1,2$, 
which implies that their norms are uniformly bounded above and below by 
positive constants. This uniform bound allows us to absorb $|\xi_j|$ 
into the constant $\bar{C}$ appearing in subsequent estimates, a fact 
that plays a key role in the stability analysis.

\subsection{Case $m\ge3$}

We now proceed to select $\xi_j$ and $y_j$, for $j=0,1,2,...,m$, involved 
in the construction of the 
geometric optics solutions, following the approach in \cite{AliLauri}. 
We begin by presenting the case 
$m=3$; the general case for $m>3$ follows analogously later.

Let $p=(t_0,x_0)\in\mathbb{D}$, where $\mathbb{D}$ is given by (\ref{regionD}). 
% Our goal is to construct four geometric 
% optics solutions that serve as structured signals propagating from 
% the point $p$ back toward the boundary $(0,T)\times\partial\Omega$.
% Since $p\in\mathbb{D}$, 
Then, there exists a point 
$p^-=(t_1,x_1)\in[d,T-d]\times\partial\Omega$ 
% \textbf{[should it be $(d,T-d)\times\partial\Omega$?]} 
and a non-zero light-like vector $\xi^-\in\mathbb{R}^{1+n}$ such that 
$t_1<t_0$ 
and $p=\gamma_{p^-,\xi^-}(s_1)$ for some $s_1\in\mathbb{R}$, where $\gamma$
is the light-ray defined in (\ref{ex12}). Without loss of generality, 
we normalize the covector 
$\xi^-=(\xi_0^-,...,\xi_n^-)$ such that $\xi_0^-=-1$, so that $s_1=t_0-t_1$. 
Moreover, since $p\in\mathbb{D}$, it also admits a future-directed 
light-like path to the boundary. That is,
there exists a point $p^+:=(t_2,x_2)$ on $(0,T)\times\partial\Omega$, 
with $t_2>t_0$, and a light-like covector 
$\xi^+=(-1,\xi_1^+,...,\xi_n^+)$ such that $p=\gamma_{p^+,\xi^+}(-s_2)$,
where $s_2=t_2-t_0$.

% 20250702 12:30 修改至此

We proceed to select two additional light-like covectors as small 
perturbations of $\xi^-$, such that $\xi^+$ can be expressed as a linear 
combination of $\xi^-$ and these perturbations. To facilitate this 
construction, we perform a rotation of 
the spatial coordinates
$(x_1,...,x_n)\in\mathbb{R}^n$, such that in the rotated coordinate system, 
the covectors $\xi^+$ and $\xi^-$ are represented as 
\begin{equation}\label{ex29}
    \tilde{\xi}_0=(-1,\cos\theta,\sin\theta,0,...,0),\qquad 
    \tilde{\xi}_1=(-1,1,0,0,...,0),
\end{equation}
respectively, for some angle $\theta\in[0,2\pi]$. 

In general, it is possible that $\tilde{\xi}_0=\tilde{\xi}_1$, i.e., 
$\theta=0$, leading to degeneracy in the linear dependence structure. 
To avoid this, we perturb $\theta$ slightly by introducing a small angle 
$\theta_0>0$, and set $\theta=\theta_0$ or 
$\theta=2\pi-\theta_0$, ensuring $\tilde{\xi}_0\ne\tilde{\xi}_1$.
We now choose $\theta_0$ small enough so that the corresponding geodesic 
$\gamma_{p,\tilde{\xi}_0}$ still intersects the boundary of the domain in 
the time interval $\left[\frac{d}{2},T-\frac{d}{2}\right]$, that is,
\begin{equation*}
    \gamma_{p,\tilde{\xi}_0}\cap(0,T)\times\partial\Omega\subset\left[\frac{d}{2},T-\frac{d}{2}\right]\times\partial\Omega.
\end{equation*}
In fact, one can define
\begin{equation*}
    \psi:=\inf_{p\in\mathbb{D}}\sup\left\{\ \theta\ \left|\ \gamma_{p,\tilde{\xi}_0}\cap(0,T)\times\partial\Omega\subset\left[\frac{d}{2},T-\frac{d}{2}\right]\right.\times\partial\Omega\ \right\},
\end{equation*}
% \textbf{[For different $p$, the choice of coordinates is different. How the above definition makes sense?]} 
which quantifies the maximal allowable angular deviation that preserves 
boundary intersection. This quantity is independent of the specific 
choice of $p\in\mathbb{D}$, and
\begin{equation*}
    \psi=\mathcal{O}\left(\frac{d}{\mathrm{diam}(\Omega)}\right).
\end{equation*}
We emphasize that due to the fixed $d>0$ in the 
definition of $\mathbb{D}$ (see equation (\ref{regionD})), 
the maximal allowable value of $\theta_0$ 
satisfying the above condition admits a uniform positive lower bound over 
all $p\in\mathbb{D}$. We denote this bound by 
\begin{equation*}
    \theta_{\min}=\theta_{\min}(d)=\psi>0,
\end{equation*}
indicating its dependence solely on $d$. Hence, we may choose the angle 
$\theta$ from the interval
$\theta\in[\theta_{\min},2\pi-\theta_{\min}]$.
%% 这里待修改，要给theta=0施加一个小的扰动

We now define, for a small parameter 
$\sigma\in(0,1)$, the covectors
\begin{equation}\label{ex30}
    \tilde{\xi}_2=(-1,\sqrt{1-\sigma^2},\sigma,0,...,0),\qquad
    \tilde{\xi}_3=(-1,\sqrt{1-\sigma^2},-\sigma,0,...,0),
\end{equation}
which represent two small perturbations of the light-like vector 
$\tilde{\xi}_1$ in directions orthogonal to its spatial part.
According to \cite[Lem. 1]{Matti12}, the following linear relation holds:
\begin{equation}\label{ex31}
    \sigma^2\tilde{\xi}_0+\kappa_1\tilde{\xi}_1+
    \kappa_2\tilde{\xi}_2+\kappa_3\tilde{\xi}_3=0,
\end{equation}
where the coefficients $\kappa_1$, $\kappa_2$, $\kappa_3$ are given by
\begin{equation}\label{ex32}
    \begin{split}
    \kappa_1&=\frac{\sigma^2(\sqrt{1-\sigma^2}-\cos\theta)}{1-\sqrt{1-\sigma^2}},\\
    \kappa_2&=-\frac{\sigma\sin\theta+\sigma^2}{2}-\frac{1}{2}\frac{\sigma^2(\sqrt{1-\sigma^2}-\cos\theta)}{1-\sqrt{1-\sigma^2}},\\
    \kappa_3&=\frac{\sigma\sin\theta-\sigma^2}{2}-\frac{1}{2}\frac{\sigma^2(\sqrt{1-\sigma^2}-\cos\theta)}{1-\sqrt{1-\sigma^2}}. 
    \end{split}
\end{equation}
% with $b(\theta)=1-\cos\theta$. 
% 20250414 17:03至此
We will also denote $\kappa_0:=\sigma^2$ for convenience, 
% To return to the original coordinate system, 
and define the covector 
$\xi_0$ to be the representation of 
$\sigma^2\tilde{\xi}_0$, 
% after passing back to the original coordinate system, 
with $\xi_j$ for $j=1,2,3$ to be the analogous representations 
of the covectors $\kappa_j\tilde{\xi}_j$. Then $\xi_0=\sigma^2\xi^+$, 
$\xi_1=\kappa_1\xi^-$.

\begin{figure}[htbp]
    \centering
    \includegraphics[width=0.8\textwidth]{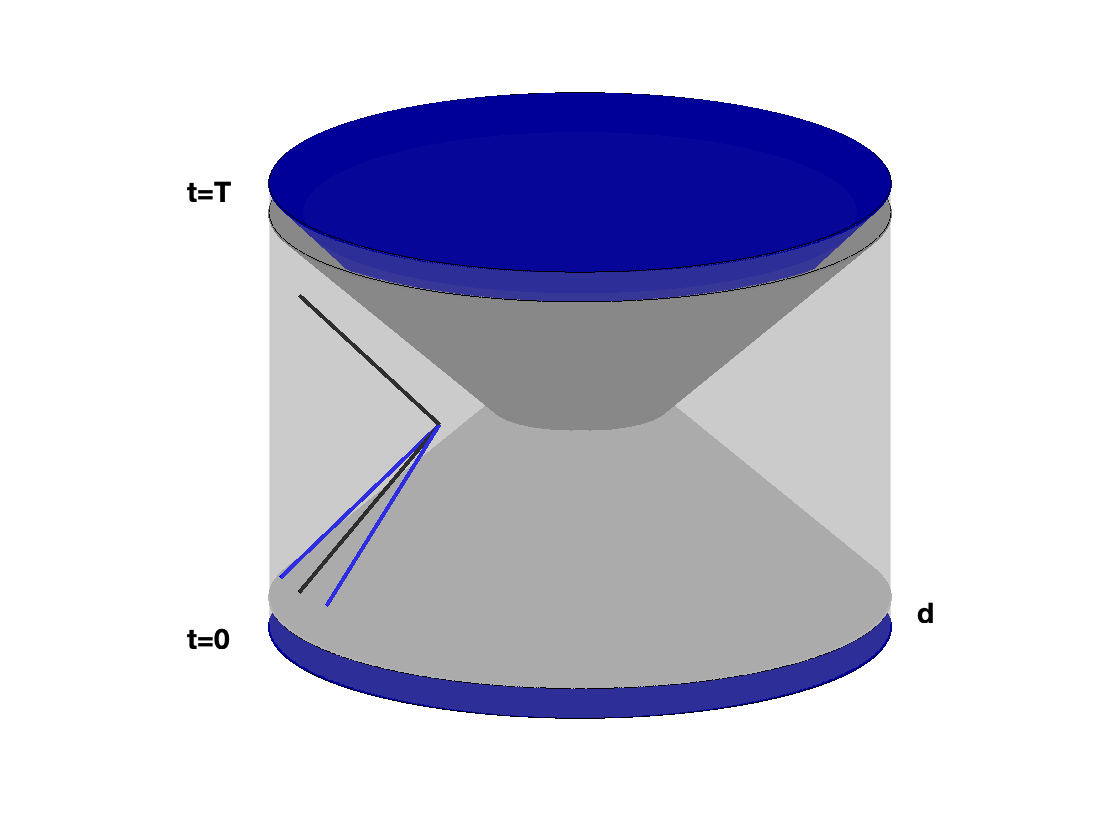}  % 插入图片，设置宽度为页面宽度的80%
    \caption{Illustration of covectors $\xi_j$ when $m=3$ in $(0,T)\times\mathbb{R}^2$.}  % 添加图片标题
    \label{fig:Xi}  
\end{figure}

%% 20250702 13:30 修改至此

We define
\begin{equation}\label{ex33}
    p_0=\gamma_{p,\xi_0}(s_2/\sigma^2),\quad
    p_1=\gamma_{p,\xi_1}(-s_1/\kappa_1).
\end{equation}
By construction, we have $p_0=p^+$ and $p_1=p^-$. For $j=2,3$,
we choose $\sigma>0$ sufficiently small so that the geodesics 
$\gamma_{p,\xi_j}$ intersect the boundary segment
$[\frac{d}{2},T-\frac{d}{2}]\times\partial\Omega$ at points 
$p_j$ that are close to $p_1$. The angle between $\gamma_{p,\xi_1}$ and 
$\gamma_{p,\xi_j}$ is given by $\arctan{\frac{\sigma}{\sqrt{1-\sigma^2}}}$, 
with a positive lower bound given by:
\begin{equation*}
    \psi':=\inf_{p\in\mathbb{D}}\sup\left\{\left.\arctan{\frac{\sigma}{\sqrt{1-\sigma^2}}}\ \right|\ \gamma_{p,\xi_j}\cap(0,T)\times\partial\Omega\subset\left[\frac{d}{2},T-\frac{d}{2}\right]\times\partial\Omega,j=2,3\right\},
\end{equation*}
which has the estimate given as
\begin{equation*}
    \psi'=\mathcal{O}\left(\frac{d}{\mathrm{diam}(\Omega)}\right),
\end{equation*}
independent of the choice of $p\in\mathbb{D}$.
% This implies that, due to the fixed constant $d>0$ in the 
% definition of $\mathbb{D}$ in (\ref{regionD}), we do not need to take the
% limit $\sigma\to0$. 
Therefore, there exists a uniform positive lower bound 
for the admissible range of $\sigma$, depending only on $d$, which we 
denote as
\begin{equation*}
    \sigma_{\min}=\sigma_{\min}(d)=\psi'>0.
\end{equation*}
Furthermore, from the expressions in (\ref{ex32}),
we observe that $|\xi_j|$ for $j=0,1,2,3$ also have uniform positive 
lower and upper bounds independent of $p$.
To ensure the asymptotic behavior of the coefficients $\kappa_j$,
it suffices to choose $\sigma_{\min}\ll\theta_{\min}$, in which case we 
can write:
\begin{equation*}
    \kappa_1=1-\cos\theta+\mathcal{O}(\sigma),\
    \kappa_2=-\frac{1}{2}(1-\cos\theta)+\mathcal{O}(\sigma),\
    \kappa_3=-\frac{1}{2}(1-\cos\theta)+\mathcal{O}(\sigma).
\end{equation*}
Hence, all $\kappa_j$, $j=0,1,2,3$, possess uniform positive 
lower and upper bounds that are independent of the point $p\in\mathbb{D}$.

We set $\sigma=\sigma_{\min}$, and then there exist $s_2,s_3\in\mathbb{R}$ such 
that $p_j=\gamma_{p,\xi_j}(s_j)$, $j=2,3,$ with 
$p_2,p_3\in[\frac{d}{2},T-\frac{d}{2}]\times\partial\Omega$. 
We define an extended domain $\Omega'\supset\Omega$, as in the case $m=2$, 
and we are now ready to choose the boundary data 
$f_{p_j,\xi_j}$ for $j=0,1,2,3$ according to (\ref{ex24}). 
% Note that in this case, 
% as in (\ref{ex22}), we have the support condition
% \begin{equation*}
%     \mathrm{supp}(u_{1\tau}^{(0)}u_{2\tau}^{(0)}u_{3\tau}^{(0)} \varphi_\tau)\subset
%     H(t_0,\delta)\subset\mathbb{D},
% \end{equation*}
% where $H(t_0,\delta)$ is a small hypercube neighborhood of the point $p$, 
% and we denote this hypercube by
% \begin{equation*}
%     H:=H(t_0,\delta).
% \end{equation*}
This completes the construction in the case $m=3$.

The case $m>3$ proceeds similarly. We define
\begin{equation*}
    \tilde{\xi}_1=\tilde{\xi}_4=\cdots=\tilde{\xi}_m=\frac{1}{m-2}(-1,1,0,...,0),
\end{equation*}
and set $p_1=p_4=p_5=\cdots=p_m=p^-$, $\kappa_1=\kappa_4=\kappa_5=\cdots=\kappa_m$. 
All other arguments follow as in 
the case $m=3$. 
Therefore, 
\begin{equation}\label{m3xij}
    \xi_j = \kappa_j\tilde{\xi}_j,\quad j=0,1,2,...,m.
\end{equation}
We emphasize that the norms 
$|\xi_j|$ are uniformly bounded from above and below by positive constants 
independent of the point $p\in\mathbb{D}$.
We define the associated boundary data of the form (\ref{12-7}) by
\begin{equation*}
    f_j:=f_{p_j,\xi_j},\quad j=0,1,2,...,m.
\end{equation*}
The rest of the construction is then similar to the case \(m=3\) above.

\section{Interior Stability}\label{se7}

The proof of the stability estimate will proceed in three steps, which can be started without discriminating the $m=2$ case and $m\geq 3$ case. It will diverge when necessary.
Throughout the argument, we continue to emphasize the uniformity 
of all involved constants.
% \textbf{[remark that you can start without discriminating the $m=2$ case and $m\geq 3$ case, and then diverge.]}
\subsection{Step 1}

We begin by estimating the right-hand side of the 
Alessandrini-PIE type identity in (\ref{12-6}) such that
\begin{equation*}
    \begin{split}
        \Bigg| \int_{(0,T)\times\partial\Omega} &\Lambda_q'\left( 
            \sum_{j\in S} f_j \right) \partial_\nu \varphi dS_xdt
        \Bigg| \\
        &\le \left\| \Lambda_q'\left( 
            \sum_{j\in S} f_j \right) 
            \right\|_{L^2((0,T)\times\partial\Omega)} \left\|
                \partial_\nu\varphi 
                \right\|_{L^2((0,T)\times\partial\Omega)} \\
            &\le \bar{C}(\Omega,T) \left\| \Lambda_q'\left( 
                \sum_{j\in S} f_j \right) 
                \right\|_{E^{s-\frac{1}{2}}((0,T)\times\partial\Omega)}
                \left\|
                \partial_\nu\varphi 
                \right\|_{L^\infty((0,T)\times\partial\Omega)} \\
            &\le \bar{C}(\Omega,T) \varepsilon\left\|    
                \sum_{j\in S} f_j  
                \right\|_{C^{s+1}((0,T)\times\partial\Omega)}^m
                \left\|
                \partial_\nu\varphi 
                \right\|_{L^\infty((0,T)\times\partial\Omega)} \\
            &\le \bar{C}(\Omega,T) \varepsilon |S|
                \sum_{j\in S}\left\| f_j  
                \right\|_{C^{s+1}((0,T)\times\partial\Omega)}^m
                \left\|
                \partial_\nu\varphi 
                \right\|_{L^\infty((0,T)\times\partial\Omega)},
    \end{split}
\end{equation*}
where 
$$
\varepsilon=\sup_{\|f\|_{C^{s+1}((0,T)\times\partial\Omega)
}=1}\|\Lambda_q'f\|_{E^{s-\frac{1}{2}}((0,T)\times\partial\Omega)}.
$$
We emphasize that the constant $\bar{C}(\Omega,T)$ is uniform since 
it depends only on $\Omega$ and $T$, and is independent on  
particular choice of $y_j$ or $\xi_j$. 

Let $\xi_j=(\xi_{j0},\xi_j')$, where $j=0,1,2,...,m,$ 
% \textbf{[Q24: What is $U$?]} 
and $\xi_j'\in\mathbb{R}^n$.
Then from equation (\ref{ex24-1}), we know that on 
$(0,T)\times\partial\Omega$,
\begin{equation*}
    f_j=\partial_\nu u_{j\tau}^{(0)}=i\xi_j' 
    e^{i\tau\xi_j\cdot y}\left(\sum_{k=0}^{N}
    \frac{a_{jk}(y)}{\tau^{k-1}}\right)\cdot\nu + 
    e^{i\tau\xi_j\cdot y}\left(\sum_{k=0}^{N}
    \frac{\partial_\nu a_{jk}(y)}{\tau^{k}}\right).
\end{equation*}
% In this context, we focus solely on the order relationships among the 
% parameters $\tau$, $\delta$, and the derivatives of $f_j$. 
Specifically, we observe that 
% \textbf{[Q25: partial derivatives, use $\partial$ not $d$.]}
\begin{equation}\label{12-12}
    {\frac{\partial^{s+1}}{\partial y^{s+1}}f_j}= \frac{\partial^{s+1}}{\partial y^{s+1}}(\partial_\nu u_{j\tau}^{(0)})
        =\bar{C}
        \sum_{l=0}^{s+2}%C_{s+2}^l e^{i\tau\xi_j\cdot y} 
        \sum_{k=0}^{N}\frac{\partial_\nu^l a_{jk}(y)}{\tau^{k-(s+2-l)}}
    .
\end{equation}
Then, combining estimates (\ref{ajkl}) and (\ref{12-12}), we obtain
\begin{equation*}
    \|{f_j}\|_{C^{s+1}((0,T)\times\partial\Omega)}= \|\partial_\nu 
    u_{j\tau}^{(0)}\|_{C^{s+1}((0,T)\times\partial\Omega)}
    \le\bar{C}|\xi_j|^{s+2} \sum_{l=0}^{s+2}\sum_{k=0}^{N}\frac{\tau^{s+2}}
    {(\tau\delta)^{k+l}}.
\end{equation*}
If we choose $\tau>1$ and $0<\delta<1$ such that 
$\tau\delta>1$, and recalling that $|\xi_j|$ for all $j=0,1,2,...,m$ 
have uniform positive lower and upper bounds, then the above inequality simplifies to
\begin{equation*}
    \|{f_j}\|_{C^{s+1}((0,T)\times\partial\Omega)}=\|\partial_\nu 
    u_{j\tau}^{(0)}\|_{C^{s+1}((0,T)\times\partial\Omega)}
    \le \bar{C}\tau^{s+2}.
\end{equation*}
From the choice of boundary values in (\ref{ex24-1}) and the 
estimate of geometric optics solutions in (\ref{12-8}), 
we therefore obtain
% \begin{equation*}
%         \left| \int_{(0,T)\times\partial\Omega} \Lambda_q'\left( 
%             \sum_{j\in S} f_j \right) \partial_\nu 
%             \varphi dS_xdt
%         \right| 
%             \le \bar{C}(\Omega,T)\varepsilon |S|^{2}\tau^{m(s+2)+1}.
% \end{equation*}
% We end up with 
the estimate for the right-hand side of (\ref{12-6}):
\begin{equation*}
    \left| -\frac{1}{2}\sum_{S\subset U} 
        (-1)^{|U\setminus S|} \int_{(0,T)\times\partial\Omega} 
        \Lambda_q' \left( \sum_{j\in S} f_j\right) 
        \partial_\nu \varphi dS_xdt \right|
    \le \bar{C}(\Omega,T,m)\varepsilon 
        \tau^{m(s+2)+1}.       
\end{equation*}
We highlight that the constant $\bar{C}$ here depends on $m$, and emphasize 
that the estimate grows at a rate of $O(2^m)$.

\subsection{Step 2}
In this step, we estimate the term 
% \begin{align*}
%     \left|\int_{(0,T)\times\Omega}q\prod_{j=0}^m a_{j\tau}dxdt
%         \right| 
%     &= \left|\int_{(0,T)\times\Omega}q 
%         \prod_{j=1}^mu_{j\tau}^{(0)} \varphi_\tau dxdt \right| \\
%     &= \left|\int_{(0,T)\times\Omega}q \prod_{j=1}^{m}
%         (u_{j\tau}^{(0)}-u_{j}^{(0)}+u_{j}^{(0)}) 
%         (\varphi_\tau-\varphi+\varphi) dxdt \right|,
% \end{align*}
\begin{equation*}
    \left|\int_{(0,T)\times\Omega}q\prod_{j=0}^m a_{j\tau}dxdt
        \right| 
    = \left|\int_{(0,T)\times\Omega}q 
        \prod_{j=1}^mu_{j\tau}^{(0)} \varphi_\tau dxdt \right|.
\end{equation*}
Because of the construction of the light-like covectors $\xi_j$,
$j=0,1,2,...,m$, it satisfies $\sum_{j=0}^m\xi_j=0$.
We denote $u_0^{(0)}:=\varphi$ and $u_{0\tau}^{(0)}:=\varphi_\tau$. 
Due to the uniform construction of the geometric optics solutions 
$u_{j\tau}^{(0)}$ and their corresponding boundary terms $f_j$, 
we have for all $j,k \in \{0,1,2,...m\}$ and $1\le p\le\infty$,
\begin{align*}
    \left\| u_{j\tau}^{(0)}-u_{j}^{(0)}\right\|_{L^p((0,T)\times\Omega)}
    &= \bar{C}(y_j,\xi_j) 
\left\| u_{k\tau}^{(0)}-u_{k}^{(0)}\right\|_{L^p((0,T)\times\Omega)},\\
    \left\| u_{j}^{(0)}\right\|_{L^p
((0,T)\times\Omega)}&= \bar{C}(y_j,\xi_j) \left\| u_{k}^{(0)}\right\|_{L^p
((0,T)\times\Omega)}.
\end{align*}
Applying the binomial expansion and using the triangle inequality, we derive
\begin{align*}
    \left|\int_{(0,T)\times\Omega}q\prod_{j=0}^m a_{j\tau}dxdt
        \right| 
    &\le \left|\int_{(0,T)\times\Omega}q 
        \prod_{j=1}^mu_{j}^{(0)}\varphi dxdt \right| \\
    &\quad + \bar{C}(y_j,\xi_j)\sum_{i=1}^{m}C_m^i\|q\|_{L^\infty
        } 
        \|u_{j\tau}^{(0)}-u_{j}^{(0)}\|_{L^p}^i\|u_{j}^{(0)}\|^{m-i}
        _{L^q},
\end{align*}
where $\frac{i}{p}+\frac{m-i}{q}=1$, and the norms are taken over
$(0,T)\times\Omega$.
% In the earlier chapter, we established that for well-posedness of the 
% nonlinear wave equation, the coefficient $q$ must satisfy a smallness 
% condition. 
Recall that we have assumed $\|q\|_{H^s(\Omega)}< C$, 
where $C>0$ is a constant.

We know that each $u_{j}^{(0)}$ satisfies (\ref{11-4}) with $f_j = \partial_\nu u_{j\tau}^{(0)}$, then,
\begin{equation}\label{12-10}
    \begin{cases}
        \square \left(u_j^{(0)}-u_{j\tau}^{(0)}\right)= -\square u_{j\tau}^{(0)}, 
            & \mathrm{in}\ (0,T)\times\Omega, \\
        \partial_\nu \left(u_j^{(0)}-u_{j\tau}^{(0)}\right) = 0, 
            & \mathrm{on}\ (0,T)\times\partial\Omega, \\
        \left(u_j^{(0)}-u_{j\tau}^{(0)}\right)(0,x)
            =\partial_t \left(u_j^{(0)}-u_{j\tau}^{(0)}\right)(0,x)=0,
            & x\in\Omega.
    \end{cases}
\end{equation}
%which is similar to the proof of the well-posedness of the NtD 
%map. By Theorem 3.1 in \cite{Zhai8}, $\left(u_j^{(0)}-u_{j\tau}^{(0)}\right)
%\in \bigcap_{0\le k\le s} W^{k,\infty}((0,T);H^{s-k}(\Omega))$, 
By Theorem 3.1 in \cite{Zhai8} and the geometric optics construction (cf. estimate %(cf. [\cite{UhlmannZhai},(35)]) 
(\ref{12-8})), for all $k\ge0$, we have
\begin{equation*}
    \begin{split}
        ||u_j^{(0)}||_{E^{k}((0,T)\times\Omega)}&\le \bar{C}||f_j||_{C^{k+1}((0,T)\times\partial\Omega)}\le \bar{C}\tau^{k+2},\\
        ||u_j^{(0)}-u_{j\tau}^{(0)}||_{E^{k}((0,T)\times\Omega)} &\le \bar{C}||\square u_{j\tau}^{(0)}||_{E^k((0,T)\times\Omega)}\le C(T,\Omega)\delta^{-N-2}\tau^{-N+k}.
    \end{split}
\end{equation*}
    % We emphasize that the constant $\bar{C}$ is uniform in $y_j$ and 
    % $\xi_j$, as guaranteed by the energy estimate and geometric optics 
    % construction.

By the Sobolev embedding theorem, we have
\begin{equation*}
    ||u||_{H^{k}((0,T)\times\Omega)}\le \bar{C}||u||_{E^{k}((0,T)\times\Omega)}.
\end{equation*}
Then we let $p=q=m$ and use Sobolev embedding $H^{k'}((0,T)\times\Omega)\to 
L^m((0,T)\times\Omega)$, provided $2k'>1+n$ and $m\ge2$. 
By further defining
\begin{equation*}
    k'(n)=\begin{cases}
        \frac{n+2}{2}, & n=2,4,6,...,\\
        \frac{n+3}{2}, & n=3,5,7,...,
    \end{cases}
\end{equation*}
which satisfies $k'(n)\le s$, coinciding with the assumption that $s\in\mathbb{N}$ and $s>(n+1)/2$, so that the embedding holds for any $n\ge2$, we obtain
% \begin{equation*}
%     H^{k'(n)}((0,T)\times\Omega)\to L^m((0,T)\times\Omega).
% \end{equation*}
% Hence, we obtain
\begin{equation*}
    \begin{split}
        ||u_j^{(0)}||_{L^m((0,T)\times\Omega)}&\le\bar{C}||u_j^{(0)}||_{H^{k'}((0,T)\times\Omega)}\le \bar{C}\tau^{k'+2},\\
        ||u_j^{(0)}-u_{j\tau}^{(0)}||_{L^m((0,T)\times\Omega)}&\le\bar{C}||u_j^{(0)}-u_{j\tau}^{(0)}||_{H^{k'}((0,T)\times\Omega)} \le \bar{C}\delta^{-N-2}\tau^{-N+k'}.
    \end{split}
\end{equation*}

We now return to the main estimate: %Recall from the previous passage:
\begin{align*}
    \left|\int_{(0,T)\times\Omega}q\prod_{j=0}^{m}a_{j\tau}dxdt
        \right| 
    &\le \left|\int_{(0,T)\times\Omega}q 
        \prod_{j=1}^{m}u_{j}^{(0)}\varphi dxdt \right| \\
    &\quad + \bar{C}\tau^{\frac{1}{2}m(n+7)} \left[\left(
        1+\delta^{-N-2}\tau^{-N-2}\right)^{m}-1\right],
\end{align*}
where we used the upper bound $k'(n)+2\le\frac{1}{2}(n+7)$.
When $N\ge m(s+2)$, 
\begin{equation}\label{step2}
    \left|\int_{(0,T)\times\Omega}q\prod_{j=0}^{m}a_{j\tau}dxdt
    \right| \le\bar{C}\left(\varepsilon\tau^{m(s+2)+1}+\tau^{\frac{1}{2}m(n+7)-N-2}\delta^{-N-2}\right).
\end{equation}
This concludes the estimate in Step 2.

\subsection{Step 3}

Recalling the choice of boundary values, 
in this step, we estimate the approximation behavior 
of the integral
\begin{equation*}
    \left| \int_{(0,T)\times\Omega}q(y)\prod_{j=0}^{m}a_{j\tau}(y-y_0)dy\right|. 
\end{equation*}
By (\ref{aj0}), for $j=0,1,...,m$, the first term of the amplitude function $a_{j\tau}$ is given as
$$
        a_{j0}(y) = \chi_\delta\left(\xi_j\cdot y\right)
            \prod_{l=1}^{n-1}\chi_\delta\left(w_{jl}\cdot y\right).
$$
% and their product is
% $$
%         \prod_{j=0}^{m} a_{j0}(y) = \prod_{j=0}^{m}\left(\chi_\delta\left(\xi_j\cdot y\right)
%         \prod_{l=1}^{n-1}\chi_\delta\left(w_{jl}\cdot y\right)\right).
% $$
We extend $q$ from $[d,T-d]\times\Omega$ to $[d,T-d]\times\Omega'$ such 
that $q$ is $C_c^1$ in $[d,T-d]\times\Omega'$ with 
$$
\|q\|_{C^1([d,T-d]\times(\Omega'\setminus\Omega))}\le C\|q\|_{C^1([d,T-d]\times\partial\Omega)}.
$$
Below, we choose solutions for different $m$ such that for $m=2$, the intersection of the support of solutions is a neighbourhood of a line segment, while for $m\ge3$, the intersection is a neighbourhood of a point. This lead to varied estimates. The $m\ge3$ case will be specified first in this part.
% \textbf{[Do you need $q$ is compactly supported in $\Omega'$?]}
% \textbf{[Recall your different choices of solutions for $m=2$ and $m\geq 3$.]}
\subsubsection{Case $m\ge3$}

Similar to Lemma 3 in \cite{LassasP8}, we have the following approximation 
property.
The proof of the lemma follows the line of \cite[Lemma 3]{LassasP8} and can be referred in Appendix A.
\begin{Lem}\label{Lem:4}
    Let $b\in C^1((0,T)\times\Omega')$ where $\Omega'\supset\Omega$ 
    is a bounded open subset of $\mathbb{R}^n$, $m\ge2$, $m\in\mathbb{N}$ and 
    $\delta>0$. The following estimate
    \begin{equation*}
        \left| b(y_0) -C(n,y_0)\delta^{-(n+1)}\int_{(0,T)\times\Omega'}
        b(y)\prod_{j=0}^{m}a_{j0}(y-y_0)dy\right|\le
        C'(n)\|b\|_{C^1}\delta
    \end{equation*}
    holds true for all $y_0=(t_0,x_0)\in(0,T)\times\overline{\Omega}$, where 
    $\xi_j$ is selected as in (\ref{m3xij}), and $w_{jl}$ 
    is defined as in (\ref{ex15}) with $|w_{jl}|=1$, and therefore
    $C(n,y_0)$, $C'(n)$ are constants with uniform positive lower 
    and upper bounds independent on $y_0$. In particular, the integral 
    on the left-hand side converges uniformly to $b$ when $\delta\to0^+$.
\end{Lem}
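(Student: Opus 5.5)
The plan is to treat the product $K_\delta(y):=\prod_{j=0}^m a_{j0}(y)$ as an approximate identity at scale $\delta$, and then run the standard mollifier argument. By (\ref{aj0}) and (\ref{cutoff}), each factor has the form $\chi_\delta(\xi_j\cdot y)\prod_{l=1}^{n-1}\chi_\delta(w_{jl}\cdot y)=\prod \alpha(\ell(y)/\delta)$, where the $\ell$ are linear functionals. Here $\alpha$ is understood as the one-dimensional normalized bump.

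Hence $K_\delta(y)=K_1(y/\delta)$, with $K_1(z)=\prod_{j=0}^m\alpha(\xi_j\cdot z)\prod_{l}\alpha(w_{jl}\cdot z)$ a fixed nonnegative smooth function. The first task is to show that $K_1$ has compact support in a ball of radius $R$, with $R$ uniform in $y_0$, and that $c_0:=\int_{\mathbb{R}^{1+n}}K_1(z)\,dz$ has uniform positive lower and upper bounds.

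For compact support, it suffices that the collection of functionals $\{\xi_j,\,w_{jl}\}$ spans $(\mathbb{R}^{1+n})^*$. For a single $j$ one already has $\xi_j,w_{j1},\dots,w_{j,n-1}$, which are $n$ independent functionals whose common kernel is the ray direction $\xi_j^\sharp$. Adding $\xi_k$ for a $k$ whose spatial direction differs from that of $\xi_j$ kills this kernel, because $\xi_k\cdot\xi_j^\sharp\neq0$ for non-parallel light-like covectors. In the construction of Section \ref{se6}, the covectors $\xi^+$ and $\xi^-$ differ by an angle $\theta\in[\theta_{\min},2\pi-\theta_{\min}]$, and $|\xi_j|$ is uniformly bounded above and below. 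A compactness argument over the admissible parameters then gives a uniform lower bound on the smallest singular value of a chosen $(1+n)$-subfamily. This yields both the uniform $R$ and, since $K_1>0$ near $0$ (all factors are positive at $0$ because $\alpha(0)>0$), a uniform lower bound $c_0\ge c\,|B_{r}|$ for some fixed $r$. The upper bound on $c_0$ is trivial.

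Next I change variables $y=y_0+\delta z$, so that
\[\delta^{-(n+1)}\int b(y)K_\delta(y-y_0)\,dy=\int b(y_0+\delta z)K_1(z)\,dz.\]
This is valid provided the support $y_0+\delta B_R$ lies in the domain of integration. For $y_0\in(0,T)\times\overline\Omega$ and small $\delta$ this holds because $\Omega'\supset\overline\Omega$; near $t=0,T$ one uses that $y_0$ ranges over the relevant compact set, or alternatively extends $b$ by zero and absorbs the error. Setting $C(n,y_0):=c_0^{-1}$, the difference becomes
\[c_0^{-1}\int\bigl(b(y_0)-b(y_0+\delta z)\bigr)K_1(z)\,dz,\]
which is bounded by $c_0^{-1}\|b\|_{C^1}\,\delta\int|z|K_1(z)\,dz\le C'(n)\|b\|_{C^1}\delta$ by the mean value theorem. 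Uniform convergence follows immediately.

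The main obstacle is the uniformity step. Concretely, one must quantify the nondegeneracy of the family of linear forms over all $p\in\mathbb{D}$, including the case $m=2$ if needed. There the covectors $\xi_j$ are all parallel, the product localizes only to a tube, and the claim as stated would fail; so I would restrict the argument to the choice (\ref{m3xij}) with two non-parallel covectors, which is what the statement requires. I would handle the nondegeneracy by an explicit determinant in the rotated coordinates of (\ref{ex29}), which depends only on $\theta$, $\sigma_{\min}$ and $m$.
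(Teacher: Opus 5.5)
Your proof is correct and is essentially the paper's argument. You rescale so that $\prod_j a_{j0}(y-y_0)=K_1((y-y_0)/\delta)$, take $C(n,y_0)$ to be the reciprocal of the kernel's mass, and bound the remainder by $\|b\|_{C^1}\delta\int|z|K_1(z)\,dz$.

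The paper gets uniformity from a volume count for intersecting hypercylinders. Your spanning argument is a cleaner substitute: the two non-parallel covectors $\xi_0=\sigma^2\xi^+$ and $\xi_1=\kappa_1\xi^-$ already make the functionals span, with a determinant of size comparable to $\kappa_1\sigma^2(1-\cos\theta)$. You are also right that for the parallel covectors of the $m=2$ construction the $\delta^{-(n+1)}$ normalization fails. The paper's proof asserts otherwise, but Lemma~\ref{Lem:5} correctly uses $\delta^{-n}$ there.

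The one thing to discard is the fallback of extending $b$ by zero near $t=0,T$. That leaves an $O(1)$ error rather than $O(\delta)$, since roughly half the kernel mass is cut off. Use one of these instead:
\begin{itemize}
\item restrict to $t_0\in[d,T-d]$ with $\delta$ small, which is all that is used later since $y_0\in\mathbb{D}$; or
\item define $C(n,y_0)^{-1}$ as the mass of $K_1$ over the rescaled domain $\bigl((0,T)\times\Omega'-y_0\bigr)/\delta$. For small $\delta$ this is still at least $c_0/2$, because $K_1$ is even and only one time constraint is active.
\end{itemize}
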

%\fix{Change " Chapter 6" to concrete equation. }

Since
$$
        a_{j\tau}(y)= a_{j0}(y)+\sum_{k=1}^{N}
                  a_{jk}(y)\tau^{-k}+\mathcal{O}((\delta\tau)^{-(N+1)}),
$$
% \textbf{[Q26: $\mathcal{O}(\tau^{-N-1})$? This happens a lot throughout the paper. Also should $\delta$ be involved?]} 
we denote the remainder by $\mathcal{R} a_j(y):=a_{j\tau}(y)-a_{j0}(y)$, and observe that
$$
    \mathcal{R} a_j(y)\sim(\delta\tau)^{-1}+(\delta\tau)^{-N}+
(\delta\tau)^{-(N+1)}.
$$
Choosing $\delta<1$ such that $\delta\tau>1$, we simplify:
$\mathcal{R} a_j(y)\sim (\delta\tau)^{-1}$. Then, expanding the 
product gives
\begin{align*}
&\quad\left| \int_{(0,T)\times\Omega'}\ q(y)\left(\prod_{j=0}^{m}a_{j\tau}(y-y_0)
      -\prod_{j=0}^{m}a_{j0}(y-y_0) \right)dy\right| \\
      &\le C(m,T,\Omega) \delta^{n+1} ((1+(\delta\tau)^{-1})^{m+1}-1)\\
      &\le C(m,T,\Omega) \delta^{n+1} (\delta\tau)^{-1}.
\end{align*}
% \begin{align*}
%     &\quad\left| \int_{(0,T)\times\Omega'}qa_{0\tau}a_{1\tau}\cdots a_{m\tau}dxdt - \int_{
%         (0,T)\times\Omega'}qa_{00}a_{10}\cdots a_{m0}dxdt\right| \\
%     &= \left|\int_{(0,T)\times\Omega'}q(a_{00}+\mathcal{R} a_0)\cdots 
%         (a_{m0}+\mathcal{R} a_m)dxdt - \int_{(0,T)\times\Omega'}qa_{00}a_{10}
%         \cdots a_{m0}dxdt\right| \\
%     &\sim \left|\int_{(0,T)\times\Omega'}q(a_{j0}+\mathcal{R} a_j)^{m+1}dxdt 
%         - \int_{(0,T)\times\Omega'}qa_{j0}^{m+1}dxdt\right| \\
%     &= \left|\int_{(0,T)\times\Omega'}q\left[\binom{m+1}{m}a_{j0}^m\mathcal{R} a_j 
%         + \binom{m+1}{m-1}a_{j0}^{m-1}\mathcal{R} a_j^2+\cdots
%         + \binom{m+1}{1}a_{j0}\mathcal{R} a_j^m 
%         + \mathcal{R} a_j^{m+1}\right]dxdt\right| \\
%     &\le C(T,\Omega)\|q\|_{L^\infty((0,T)\times\Omega')} \left[
%         \binom{m+1}{1}(\delta\tau)^{-1} + 
%         \binom{m+1}{2}(\delta\tau)^{-2}\right.\\
%     &\qquad\qquad\qquad\qquad\qquad \left.+\cdots
%         +\binom{m+1}{m}(\delta\tau)^{-m} + 
%         \binom{m+1}{m+1}(\delta\tau)^{-(m+1)}
%         \right]\\
%     &\le C(T,\Omega)\varepsilon_0 ((1+(\delta\tau)^{-1})^{m+1}-1)\\
%     &\le C(m,T,\Omega)\varepsilon_0 (\delta\tau)^{-1}.
% \end{align*}
Combining with the pointwise approximation (\ref{step2}) and boundary stability estimates 
in Theorem \ref{Thm:2}, 
we conclude
\begin{align*}
    |q(y_0)|&\le \left| C(n,y_0)\delta^{-(n+1)}
        \int_{[d,T-d]\times\Omega}q(y)\prod_{j=0}^{m} a_{j\tau}(y-y_0)dy\right|\\
        &\quad + \left| q(y_0)-C(n,y_0)\delta^{-(n+1)}
        \int_{[d,T-d]\times\Omega'}q(y)\prod_{j=0}^{m} a_{j0}(y-y_0)dy\right|\\
        &\quad + C(n,y_0)\delta^{-(n+1)}\left| 
            \int_{[d,T-d]\times\Omega'}q(y)\left(\prod_{j=0}^{m} a_{j\tau}(y-y_0) - \prod_{j=0}^{m} a_{j0}(y-y_0)\right)dy\right| \\
        &\quad + C(n,y_0)\delta^{-(n+1)}\left| 
            \int_{[d,T-d]\times(\Omega'\setminus\Omega)}q(y)\prod_{j=0}^{m} a_{j\tau}(y-y_0)dy\right| \\
    % &\le C(m,n,\Omega,T)\delta^{-(n+1)}
    %         (\delta^{-N-2}\tau^{-N-2+\frac{1}{2}m(n+7)}
    %         +\varepsilon\tau^{m(s+2)+1})\\
    %     &\quad + C(n)\delta
    %         + C(m,n,\Omega,T)%\delta^{-(n+1)}
    %         (\tau\delta)^{-1}+C(m,n,\Omega,T)\varepsilon^\frac{1}{8}\\
    &\le C(m,n,\Omega,T)\left[\delta^{-(n+1)}
        \left(\delta^{-N-2}\tau^{-N-2+\frac{1}{2}m(n+7)}
            +\varepsilon\tau^{m(s+2)+1}
            \right)+\delta+(\tau\delta)^{-1}+\varepsilon^{\frac{1}{4}\frac{1}{m(s+2)+2}}
        \right].
\end{align*}
% 以下最后一步还需要考虑！！！！以及还有翟老师说的一致性的Matti证明
We now optimize the parameters.
% In previous chapters, we take the upper bound of the coefficient $q$ as a 
% constant. 
Let $p=m(s+2)+1$, $\delta=\varepsilon^\mu$ and $\tau=\varepsilon^{-2\mu}$ with $\mu>0$, then $1-\mu(n+1)-2\mu p>0$. Let $\mu=\frac{1}{n+4p+4}<{\frac{1}{4}\frac{1}{m(s+2)+2}}$ and 
$N>m(n+7)+n$.
% \begin{equation*}
%     |q(y_0)|\le C(m,n,\Omega,T)\left(
%         % \varepsilon^{-\frac{\sigma}{2}\left(m(n+7)+2n+2\right)}+
%         \varepsilon^{-\sigma\left(m(s+2)+n+2\right)+1}
%         +\varepsilon^{-\sigma(n+1)}
%         +\varepsilon^\frac{1}{8}
%         +\varepsilon^{\sigma}\right).
% \end{equation*}
% To ensure that the leading order term dominates, we require $-\sigma\left(m(s+2)+n+2\right)+1=\sigma$, 
% and then $\sigma=\frac{1}{m(s+2)+n+3}<\frac{1}{8}$. 
Substituting this into the estimate gives
\begin{equation*}
    \|q\|_{L^\infty(\mathbb{D})}\le\mathcal{O}(
        \varepsilon^\frac{1}{4m(s+2)+n+8}).
\end{equation*}

%% 20250702 18:46 修改至此

\subsubsection{Case $m=2$}
We estimate the approximation behavior of the integral 
$$
\left| \int_{(0,T)\times\Omega}q(y)\prod_{j=0}^{2}a_{j\tau}(y-y_0)dy\right|.
$$ 
To this end, we introduce the light-ray transform as follows.

Let $\Omega\subset\mathbb{R}^n$ and
$\xi=(\xi_0,\xi')\in\mathbb{R}^{1+n}$ which is light-like
satisfying the condition $|\xi_0|=|\xi'|$.
% i.e., $\xi$ lies on the light cone. 
For simplicity, we define %the light cone on the unit sphere as 
$S_1^n:=\{\xi=(\xi_0,\xi')\ |\ \xi'\in S^{n-1}, |\xi_0|=1\}$.  
% \textbf{[Q27: why call it the light cone on the unit sphere?]}

A line starting at a point $y_0=(t_0,x_0)\in(0,T)\times\partial\Omega$ with 
direction $\xi^\sharp$ is denoted by $t\mapsto\gamma(t,y_0,\xi^\sharp):=y_0+t\xi^\sharp$, $t\in\mathbb{R}$. 
Let $\tau(y_0,\xi^\sharp)$ denote the exit time of the line $\gamma$ from the 
domain $(0,T)\times\Omega$, i.e., the smallest positive time such that 
$\gamma(t,y_0,\xi^\sharp)\notin(0,T)\times\Omega$ for all $t\ge\tau(y_0,\xi^\sharp)$. 
We assume that 
the domain $\Omega$ is strictly convex, so that each line exits the 
domain in finite time and does so transversely. %We denote $t_0':=t_0+\tau(y,\xi)$.

For a function $G=G(y)=G(t,x)$, 
the light-ray transform
$\mathcal{L}(G)$ is defined as:
\begin{equation*}
    \mathcal{L}(G)(y_0,\xi^\sharp)=\int_{t_0}^{t_0+\tau(y_0,\xi^\sharp)}G(y_0+s\xi^\sharp)ds,
\end{equation*}
where we have assumed $y_0\in(0,T)\times\partial\Omega$, according to \cite{lightray}. 
% \textbf{[Q29: Should the integral be on $\int_0^{\tau(y,\xi)}$?]}
We establish the following lemma. The detailed proof can 
be found in Appendix A.
% In this work, we restrict our attention to functions with compact 
% support. Therefore, instead of integrating over all of $\mathbb{R}$, 
% it suffices to integrate over a finite interval. For instance, the 
% integration in the light-ray transform can be restricted to an interval 
% $[t_0,t_0']\subset\mathbb{R}$.

\begin{Lem}\label{Lem:5}
    Let $G\in C^1((0,T)\times\Omega)$, $y_0=(t_0,x_0)\in
    (0,T)\times\partial\Omega$ and $\delta>0$. There exist constants 
    $C(n,y_0)>0$ and $C'(n)>0$ such that the following estimate holds:
    \begin{equation*}
        % \left| \mathcal{L}(G)(y_0,\xi^\sharp) -C(n,y_0)\delta^{-n}\int_{(0,T)\times\Omega}
        % G(y)\prod_{j=0}^{2}\left(\chi_\delta\left(\xi_j\cdot(y-y_0)\right)
        % \prod_{l=1}^{n-1}\chi_\delta\left(w_{jl}\cdot(y-y_0)\right)\right)dy\right|\le
        % C'(n)\|G\|_{C^1}\delta.
        \left| \mathcal{L}(G)(y_0,\xi^\sharp) -C(n,y_0)\delta^{-n}\int_{(0,T)\times\Omega}
        G(y)\prod_{j=0}^{2}a_{j0}(y-y_0)dy\right|\le
        C'(n)\|G\|_{C^1}\delta.
    \end{equation*}
    In particular, the integral 
    on the left-hand side converges uniformly to $\mathcal{L}(G)$ as
    $\delta\to0^+$.
\end{Lem}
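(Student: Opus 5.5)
The plan is a change of variables adapted to the common ray, a delta-function argument in the transversal directions, and a first-order Taylor expansion of $G$. For $m=2$ the covectors satisfy $\xi_1=\xi_2=-\frac12\xi_0$, and all three amplitudes are centered on the same ray $\gamma_{y_0,\xi_0}$. Hence each $a_{j0}(y-y_0)$ depends only on the $n$ linear forms $\xi_j\cdot(y-y_0)$ and $w_{jl}\cdot(y-y_0)$, $l=1,\dots,n-1$. These forms all annihilate $\xi_0^\sharp$, the direction of the ray.

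First I fix an orthonormal spatial frame $\xi_0'/|\xi_0'|,w_1',\dots,w_{n-1}'$ and use it for all $j$. This is allowed because $\xi_j'$ is parallel to $\xi_0'$, so the frames in (\ref{ex15}) may be chosen equal. I then introduce coordinates $y=y_0+s\xi_0^\sharp+z$, where $z$ ranges over the $n$-dimensional complement spanned by $w_1,\dots,w_{n-1}$ and a transversal vector $e$ with $\xi_0\cdot e\neq 0$ (for instance $e=(1,0)$). The Jacobian $J$ of this linear map is a constant depending only on $|\xi_0|$, and it is uniformly bounded above and below because $1\le|\xi_j|\le2$.

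In these coordinates
\[
\prod_{j=0}^{2}a_{j0}(y-y_0)=K_\delta(z):=\chi_\delta(\xi_0\cdot z)\chi_\delta(-\tfrac12\xi_0\cdot z)^2\prod_{l=1}^{n-1}\chi_\delta(w_l\cdot z)^3 .
\]
This expression is independent of $s$, and by (\ref{cutoff}) it scales as $K_\delta(z)=K_1(z/\delta)$. Its support lies in $\{|z|\le C\delta\}$, and $\int K_\delta\,dz=\delta^n\int K_1$ with $\int K_1>0$. I therefore set $C(n,y_0):=\bigl(J\int K_1\bigr)^{-1}$; the $y_0$-dependence enters only through $|\xi_0|$ and the frame, and the resulting constant is uniform. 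Fubini then gives
\[
C(n,y_0)\,\delta^{-n}\int G\prod_j a_{j0}\,dy
=\frac{\int K_\delta(z)\bigl(\int G(y_0+s\xi_0^\sharp+z)\,ds\bigr)\,dz}{\int K_\delta(z)\,dz},
\]
where the inner integral runs over $\{s : y_0+s\xi_0^\sharp+z\in(0,T)\times\Omega\}$. This expression is a weighted average of the light-ray integrals $\mathcal L(G)$ along parallel rays shifted by $z$.

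It remains to compare $\int G(y_0+s\xi^\sharp+z)\,ds$ with $\mathcal L(G)(y_0,\xi^\sharp)$ for $|z|\lesssim\delta$. I split the difference into two pieces. The first is the integrand change, $|G(y_0+s\xi^\sharp+z)-G(y_0+s\xi^\sharp)|\le\|G\|_{C^1}|z|$, integrated over an $s$-interval of length at most $\mathrm{diam}$, which is $O(\|G\|_{C^1}\delta)$. The second is the change of the integration limits. This is the step I expect to be the main obstacle: because $\Omega$ is strictly convex and the ray exits transversally, the entry and exit parameters are Lipschitz in $z$ by the implicit function theorem. Thus the limits move by $O(\delta)$, contributing $O(\|G\|_{L^\infty}\delta)$. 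The Lipschitz constant must be uniform in $y_0$ and $\xi$. I would obtain this from a uniform lower bound on $|\langle\xi',\nu\rangle|$ at the entry and exit points, which follows from strict convexity and the choice $\langle\xi',\nu\rangle<0$, restricting to a compact family of rays if necessary. Alternatively, I would extend $G$ to $\Omega'$ as in Step 3 to avoid the issue. Averaging against the probability weight $K_\delta/\int K_\delta$ yields the bound $C'(n)\|G\|_{C^1}\delta$, and letting $\delta\to0^+$ gives the uniform convergence.
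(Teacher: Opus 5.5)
Your proof follows the paper's structure. It uses the same adapted coordinates $y=y_0+s\xi^\sharp+z$, the same observation that $\prod_j a_{j0}$ does not depend on $s$ and is a $\delta$-rescaled kernel in $z$, and the same approximate-identity estimate in the $n$ transversal variables. The only real difference is how you treat the ends of the ray. The paper extends $G$ to a $C^1$ function $F$ on $\mathbb{R}^{1+n}$ and integrates over a tube with fixed $s$-range $[0,t_1-t_0]$, so the main term is $\int L(F)(z)\prod_j a_{j0}(z)\,dz$ with $L(F)\in C^1$. It then discards the two boundary layers with volume bounds $C\delta^{1+n}$. You instead keep the exact chords of $(0,T)\times\Omega$ and control the moving endpoints directly. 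This has the small advantage that no extension is needed, because convexity lets you apply the mean value theorem inside the domain.

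One claim in your last step is wrong. A uniform lower bound on $|\langle\xi',\nu\rangle|$ at the entry and exit points does not follow from strict convexity and $\langle\xi',\nu\rangle<0$, since $\xi'$ can be arbitrarily close to tangent. Near tangency the chord length is only $\tfrac12$-H\"older in the transversal shift: for a disk of radius $R$, the chord at distance $h$ from the centre has length $2\sqrt{R^2-h^2}$. So already for $G\equiv 1$, a ray with $|\langle\xi',\nu\rangle|\ll\sqrt{\delta/R}$ gives an error of order $\sqrt{R\delta}$, not $\delta$.

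Consequently the estimate holds only with $C'$ depending on $\Omega$ and on a lower bound $c$ for $|\langle\xi',\nu\rangle|$. It is uniform only on families of rays bounded away from the glancing set, which is what your ``compact family'' fallback amounts to, and in particular $C'$ cannot depend on $n$ alone. Your alternative of extending $G$ does not remove this dependence. The paper's boundary-layer volume bounds hide exactly the same dependence on transversality.
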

% \textbf{[Q30: $\xi$ v.s. $\xi^\sharp$. Notations need to be consistent.]}

For case $m=2$, we assume that $q(x)$ is independent of $t$. 
% The reason behind this assumption will be clarified in the discussion below. 
We begin by introducing the X-ray transform and several related concepts; 
see \cite{MaSalo} for more details. 
We denote 
the Euclidean inner product
%  on tangent or cotangent vectors 
by $\langle \cdot,\cdot\rangle$, and define the norm by $|\cdot|:=
\langle\cdot,\cdot\rangle^{1/2}$. 
The unit sphere bundle of $\Omega$, denoted $S\Omega$, is defined by
$S\Omega:=\bigcup_{x\in\Omega}S_x\Omega$, where $
    % S_x\Omega:=\{(x,v)\in T_x\Omega;|v|=1\}.
    S_x\Omega:=\{x\in\Omega,v\in\mathbb{R}^{n},|v|=1\}$.
% \textbf{[Q31: $v\in \mathbb{R}^n$.]}
% This is a smooth manifold of dimension $2n-1$. 
The boundary of the unit 
sphere bundle, denoted $\partial(S\Omega)$, is given by 
$
\partial(S\Omega)=\{(x,v)\in S\Omega;x\in\partial\Omega\}$,
and is 
the union of the sets of inward and outward pointing vectors:
$
    \partial_\pm S\Omega=\{(x,v)\in\partial(S\Omega);
    \pm\langle v,\nu\rangle\le0\}$,
where $\nu$ denotes the outward unit normal vector to $\partial\Omega$. 
A unit speed line starting at a point $x\in\Omega$ and traveling in 
the direction $v\in S^n$ is denoted by $t\mapsto\gamma(t,x,v):=x+tv$. 
Let $\tau(x,v)$ denote the exit time of the line $\gamma$ from the 
domain $\Omega$, i.e., the smallest positive time such that 
$\gamma(t,x,v)\notin\Omega$ for all $t\ge\tau(x,v)$. 

The X-ray\ transform $\mathcal{X}(F)$ of a function 
$F:\Omega\to\mathbb{R}$ is then defined by integrating $F$ along the 
line paths:
\begin{equation*}
    \mathcal{X}(F)(x,v)=\int_0^{\tau(x,v)}F(\gamma(t,x,v))dt,
\end{equation*}
where $(x,v)\in\partial_+S\Omega$.
Therefore, the light-ray transform 
$\mathcal{L}(q)$ naturally degenerates into the X-ray transform 
$\mathcal{X}(q)$, i.e., $\mathcal{X}(q)(x_0,\xi')=\mathcal{L}(q)(y_0,\xi^\sharp)$. By Lemma \ref{Lem:5}, we know that
$$
\left| \mathcal{X}(q)(x_0,\xi')-C(n,x_0)\delta^{-n} 
        \int_{(0,T)\times\Omega}q(y)\prod_{j=0}^{2}a_{j\tau}(y-y_0)dy\right|\le C(n)\|q\|_{C^1}\delta.
$$
% From equations (\ref{12-7}) and (\ref{12-9}), we know that
% \begin{equation*}
%     \begin{split}
%         a_{j0}(y) &= \chi_\delta\left(\xi_j\cdot y\right)
%             \prod_{l=1}^{n-1}\chi_\delta\left(w_{jl}\cdot y\right), \quad j=0,1,2,\\
%         a_{00}(y)a_{10}(y)a_{20}(y) &= \prod_{j=0}^{2}\left(\chi_\delta\left(\xi_j\cdot y\right)
%         \prod_{l=1}^{n-1}\chi_\delta\left(w_{jl}\cdot y\right)\right).
%     \end{split}
% \end{equation*}
Similarly to the case $m\ge3$, 
% we have the following estimate:
% \begin{align*}
%     &\quad\left| \int_{(0,T)\times\Omega}qa_{0\tau}a_{1\tau}a_{2\tau}dxdt - \int_{
%         (0,T)\times\Omega}qa_{00}a_{10}a_{20}dxdt\right| \\
%     &= \left|\int_{(0,T)\times\Omega}q(a_{00}+\mathcal{R} a_0)(a_{10}+\mathcal{R} a_1)
%         (a_{20}+\mathcal{R} a_2)dxdt - \int_{(0,T)\times\Omega}qa_{00}a_{10}
%         a_{20}dxdt\right| \\
%     &\sim \left|\int_{(0,T)\times\Omega}q(a_{j0}+\mathcal{R} a_j)^{3}dxdt 
%         - \int_{(0,T)\times\Omega}qa_{j0}^{3}dxdt\right| \\
%     &= \left|\int_{(0,T)\times\Omega}q\left[3a_{j0}^2\mathcal{R} a_j 
%         + 3a_{j0}(\mathcal{R} a_j)^2
%         + (\mathcal{R} a_j)^{3}\right]dxdt\right| \\
%     &\le C(T,\Omega)\|q\|_{L^\infty(\Omega)} \left[
%         3(\delta\tau)^{-1} + 
%         3(\delta\tau)^{-2}+ 
%         (\delta\tau)^{-3}
%         \right]\\
%     &\le C(T,\Omega) (\delta\tau)^{-1}.
% \end{align*}
% \begin{equation*}
%     \left| \int_{(0,T)\times\Omega}q(y)\left(\prod_{j=0}^{2}
% a_{j\tau}(y-y_0) -\prod_{j=0}^{2}a_{j0}(y-y_0)\right)dy\right|\le C(T,\Omega) \delta^n(\delta\tau)^{-1}.
% \end{equation*}
% Thus, 
we have the following upper bound for the X-ray transform:
\begin{align*}
    |\mathcal{X}(q)(x_0,\xi')|
    &\le \left| C(n,x_0)\delta^{-n}
        \int_{(0,T)\times\Omega}q(y)\prod_{j=0}^{2}a_{j\tau}(y-y_0)dy\right|\\
        &\quad + \left| \mathcal{X}(q)(x_0,\xi')-C(n,x_0)\delta^{-n}
        \int_{(0,T)\times\Omega}q(y)\prod_{j=0}^{2}a_{j0}(y-y_0)dy\right|\\
        &\quad + C(n,x_0)\delta^{-n}\left| 
            \int_{(0,T)\times\Omega}q(y)\left(\prod_{j=0}^{2}a_{j\tau}(y-y_0) - \prod_{j=0}^{2}a_{j0}(y-y_0)\right)dy\right| \\
    % &\le C(n,\Omega,T)\delta^{-n}
    %         (\delta^{-N-2}\tau^{-N-2+n+7}
    %         +\varepsilon\tau^{2(s+2)+1})\\
    %     &\quad + C(n)\delta
    %         + C(n,\Omega,T)
    %         (\tau\delta)^{-1}\\
    &\le C(n,\Omega,T)\left[\delta^{-n}
        \left(\delta^{-N-2}\tau^{-N-2+n+7}
            +\varepsilon\tau^{2(s+2)+1}
            \right)+\delta+(\tau\delta)^{-1}
        \right].
\end{align*}
 % \textbf{[Q33: mention where you have used Lemma 5.]}
% 以下最后一步还需要考虑！！！！以及还有翟老师说的一致性的Matti证明
% In past chapters we take the upper bound of the coefficient $q$ as a 
% constant. 
Let $\delta=\varepsilon^a$ and $\tau=\varepsilon^{-b}$
where $a,b>0$.
% This gives us the estimate:
% \begin{equation*}
%     |\mathcal{X}(q)(x_0,\xi')|\le C(n,\Omega,T)\left[\varepsilon^{-\sigma n}
%         \left(\varepsilon^{1-\sigma(2(s+2)+1)}
%         +1\right)
%         +\varepsilon^{\sigma}\right].
% \end{equation*}
%Here we only consider $q=q(x)$ which is 
%independent of the temporal variable, so we have actually obtained 
%the estimate of $|\mathcal{X}(q)(x_0,\xi')|$. 
Thus, we take 
$1 - a n - b(2s+5) = b-a = a$ and $N>3n+13$. We have proved 
the estimate for every $x_0\in\partial\Omega$ and $\xi'\in S^{n-1}$, so
from here, we derive the stability estimate:
\begin{equation*}
    \|\mathcal{X}(q)\|_{L^\infty(\partial_+S\Omega)}\le\mathcal{O}(
        \varepsilon^\frac{1}{4s+n+11}),
\end{equation*}
where $\partial_+S\Omega$ is defined as before. Therefore:
\begin{equation*}
    \|\mathcal{X}(q)\|_{L^2(\partial_+S\Omega)}\le\mathcal{O}(
        \varepsilon^\frac{1}{4s+n+11}).
\end{equation*}
Given that $q\in H^s(\Omega)$ with $s>\frac{n+1}{2}$, there exists a constant 
$M>0$ such that
\begin{equation*}
    \|\mathcal{X}(q)\|_{H^2(\partial_+S\Omega)}\le M,
\end{equation*} 
according to Lemma 2.3 in \cite{MaSalo}. Using interpolation between Sobolev spaces, we obtain
\begin{equation*}
    \|\mathcal{X}(q)\|_{H^1(\partial_+S\Omega)}^2\le C
\|\mathcal{X}(q)\|_{L^2(\partial_+S\Omega)}
\|\mathcal{X}(q)\|_{H^2(\partial_+S\Omega)},
\end{equation*}
which, by Proposition 3.1 in 
\cite{Taylor11} and Theorem 4.7.8 in 
\cite{Paternain}, leads to the estimate:
\begin{equation*}
    \|q\|_{L^2(\Omega)}\le\mathcal{O}(
        \varepsilon^\frac{1}{2(4s+n+11)}).
\end{equation*}
Finally, by $\|q\|_{H^s(\Omega)}<C$ and interpolation inequality, we derive
\begin{equation*}
    \|q\|_{L^\infty(\Omega)}\le\mathcal{O}(
        \varepsilon^{\frac{1}{2(4s+n+11)}\cdot\frac{2s-n}{2s}}). 
\end{equation*}

\section{Numerical Experiments}\label{se8}
Based on the above theoretical results, in this section, we propose a neural network inversion algorithm by the first-order linearized NtD map, which yields a least squares method handling both time-independent and time-dependent coefficients as an illustration. For simplicity's sake, we consider the semilinear wave equation \eqref{2-1} by fixing the nonlinearity exponent \( m = 3 \) and the spatial dimension \( n = 2 \) for all experiments. We note that in \cite{harju2025} the authors employ higher-order linearization to recover the unknown time-independent coefficient with the same nonlinearity and spatial setting.

\subsection{Neural Network Architecture for the Unknown Coefficient }\label{se8.2}
In the proposed algorithm, we approximate the unknown coefficient \( q \) using a fully connected neural network with Fourier feature embedding. The network takes as input \((t, x) = (t, x_1, x_2)\) for time-dependent coefficients or simply \( x \) for time-independent coefficients. In general, we represent the unknown coefficient in the following neural network form:
\begin{equation}\label{nn_cutoff}
    q_{\mathrm{NN}}(t,x) = \mathrm{MLP}(\mathcal{F}(t,x_1,x_2)) \cdot \chi_{\alpha,\beta}(t). 
\end{equation}

To improve neural network stability with high‑frequency information and better capture the unknown coefficient’s spatiotemporal structure, we first map the inputs \((t,x_1,x_2)\) into a high‑dimensional Fourier feature space by the Fourier transform \(\mathcal{F}\) in (\ref{nn_cutoff}). This mapping consists of two parts: basis frequencies and interaction frequencies.
For each input dimension \(z \in \{t, x_1, x_2\}\), we define:
\[
\mathcal{F}(z) = \left[\sin(k\omega_z z), \cos(k\omega_z z)\right]_{k=0}^{n_{\mathrm{basis}}},
\]
where \(\omega_z = 2\pi / L_z\) and \(L_z\) is the domain length in \(z\). Notice that single‑variable basis functions alone cannot capture spatiotemporal couplings in \(q\). Hence, for time‑dependent coefficients, we add cross terms to \(\mathcal{F}\):
\[
\sin(k'\omega_{z_1}z_1)\sin(k'\omega_{z_2}z_2),\quad
\cos(k'\omega_{z_1}z_1)\cos(k'\omega_{z_2}z_2),\quad
\sin(k'\omega_{z_1}z_1)\cos(k'\omega_{z_2}z_2),
\]
with \(k' = 1,\dots,n_{\mathrm{cross}}\), \(z_1 \neq z_2\), and \(z_1,z_2 \in \{t,x_1,x_2\}\).
The choice of \(n_{\mathrm{basis}}\) controls frequency resolution per dimension, while \(n_{\mathrm{cross}}\) controls the range of inter‑dimensional couplings. Without cross terms, the input features are separable. Nonlinear layers can approximate couplings, but they tend to produce additive‑like fits that are less effective for complex spatiotemporal patterns. Explicit cross terms give the network direct access to these couplings. Moreover, setting \(n_{\mathrm{cross}} < n_{\mathrm{basis}}\) acts as implicit regularization: it forces the network to prioritize dominant single‑variable terms and low‑frequency couplings, while excluding high‑frequency interaction features that are noise‑sensitive. This selective frequency restriction not only optimizes the feature space but also improves robustness under noisy conditions.
Then, the features in $\mathcal{F}$ are passed through a multi-layer perceptron (MLP) with 3 fully-connected hidden layers, each containing $d_{\mathrm{hidden}}$ neurons with $\tanh$ activation functions, followed by a linear output layer producing a single scalar $\mathrm{MLP}(\mathcal{F}(t,x_1,x_2))$ in (\ref{nn_cutoff}).

For the time-dependent case, the finite speed of wave propagation has a more significant effect on the recovery. Boundary observations contain no information about \( q \) outside the region (\ref{regionD}). Due to this geometric constraint, we apply a temporal truncation \( \chi_{\alpha,\beta}(t) \) to the network output (\ref{nn_cutoff}), where \( \chi_{\alpha,\beta}(t) \) is a \( C_c^\infty \) cutoff function with support in \( [\alpha T, \beta T] \). This ensures that the reconstructed coefficient vanishes near \( t = 0 \) and \( t = T \), and that the constraint is enforced smoothly without disrupting the Adam optimizer. For time-independent coefficients, the time-dependent component and the temporal truncation \( \chi_{\alpha,\beta}(t) \) can be removed.

\subsection{Experimental Settings}\label{se8.3}
For the sake of simplicity, the computational domain considered in this section is \((0,T) \times \Omega\), where \(T = 10\) and \(\Omega = (0,4) \times (0,4)\) is a bounded rectangular region. To solve the forward problem numerically, we use a finite difference method with second‑order central differences for both spatial and temporal derivatives.
For the time‑independent case, we employ a fine grid of size \(320 \times 80 \times 80\); for the time‑dependent case, we use a coarse grid of size \(160 \times 40 \times 40\). The finer grid is chosen because the time‑independent example exhibits more complex spatial structure (e.g., high‑frequency oscillations). In contrast, to demonstrate the performance of the time‑dependent reconstruction at lower computational cost, we select coefficients with relatively simple spatiotemporal structures.

The inverse problem is formulated as recovering \(q\) from boundary observations. Following the setting in \cite{harju2025}, we construct plane-wave Neumann boundary conditions of the form:
\begin{equation*}
    f(t,x) =\partial_\nu\left( A_0 \cdot \varphi_h(\zeta) \cdot \tau^{1/2} \exp(-\tau \zeta^2 / 2)\right),
\end{equation*}
where \(\zeta = (t-t_0) - \cos\theta\,(x_1-x_{10}) - \sin\theta\,(x_2-x_{20})\) is the phase variable, \(\varphi_h\) is a smooth cutoff function with support width \(h=1.5\), \(\tau=700\) is the frequency parameter, \(A_0=0.1\) is the amplitude, and \(\theta\) specifies the propagation direction. We use \(N_{\mathrm{obs}} = 15\) observations, with \(\theta^{(i)} = \frac{\pi}{2} \cdot \frac{i-1}{2}\) for \(i = 1, 2, 3\), spatial origin at \((x_{10}, x_{20}) = (-0.5, -0.5)\), and starting times \(t_0^{(j)} = 0.15j\,T\) for \(j = 0, 1, 2, 3, 4\).

In order to realize the first-order linearized NtD map (\ref{11-3}), we take a standard approximation approach used in the coefficient recovery for semilinear Helmholtz equation in the frequency domain as shown in \cite{LuSaloXu}. In particular, the linearized Dirichlet boundary data is approximated by
\begin{align*}
 \Lambda_q'f := \partial_\gamma(\Lambda_{\gamma q})|_{\gamma=0}f
    = u^{(1)}|_{(0,T)\times\partial\Omega} \approx u|_{(0,T)\times\partial\Omega} - u^{(0)}|_{(0,T)\times\partial\Omega},  
\end{align*}
where $u|_{(0,T)\times\partial\Omega}$ satisfies the original semilinear wave equation (\ref{2-1}) and $u^{(0)}|_{(0,T)\times\partial\Omega}$ satisfies the linear wave equation (\ref{11-4}). 
In the reconstruction algorithm, we do not implement the PIE identity but include all the linearized Dirichlet data into the loss function. More precisely, by denoting the parameter in the neural network by $\Theta$, we consider the following minimization approach such that 
\begin{equation*}
    \mathcal{L}(\Theta) = \frac{1}{N_{\mathrm{obs}}} \sum_{r=1}^{N_{\mathrm{obs}}} \frac{1}{|\partial\Omega|} \int_{(0,T)\times\partial\Omega} |u_{\Theta,r}^{(1)} - (u_{\mathrm{obs},r}-u_{\mathrm{obs},r}^{(0)})|^2 \, dS_x dt,
\end{equation*}
where $N_{\mathrm{obs}}=15$ is total number of the measurement near the boundary. The above $u_{\mathrm{obs},r}$ is the solutions of the 
semilinear wave equation 
\begin{equation*}
    \begin{cases}
        \square u(t,x) 
        +qu^m(t,x) = 0, & \mathrm{in}\ (0,T)\times\Omega, \\
        \partial_\nu u(t,x) = f(t,x), & \mathrm{on}\ (0,T)\times\partial\Omega, \\
        u(0,x)=\partial_t u(0,x)=0, & x\in\Omega,
    \end{cases}
\end{equation*}
and $u_{\mathrm{obs},r}^{(0)}$ is the solution of the linear wave equation
\begin{equation*}
    \begin{cases}
        \square u^{(0)}= 0, & \mathrm{in}\ (0,T)\times\Omega, \\
        \partial_\nu u^{(0)} = f, & \mathrm{on}\ (0,T)\times\partial\Omega, \\
        u^{(0)}(0,x)=\partial_t u^{(0)}(0,x)=0, & x\in\Omega,
    \end{cases}
\end{equation*}
respectively. The variable $u_{\Theta,r}^{(1)}$ is the solution computed by the linearized wave equation given the coefficient function $q_{NN}$, such that 
\begin{equation*}
    \begin{cases}
        \square u^{(1)} + q_{NN} (u_{\mathrm{obs},r}^{(0)})^m = 0, & \mathrm{in}\ (0,T)\times\Omega, \\
        \partial_\nu u^{(1)} = 0, & \mathrm{on}\ (0,T)\times\partial\Omega, \\
        u^{(1)}(0,x)=\partial_t u^{(1)}(0,x)=0, & x\in\Omega.
    \end{cases}
\end{equation*}
In the numerical examples, we also added $2\%$ of relative error with respect to the exact data which consists with those in  \cite{harju2025}.

\subsection{Reconstruction Results}
To evaluate the reconstruction algorithm, we design coefficient functions for both time-independent and time-dependent cases. In both examples, we employ the Adam optimizer with a learning rate of $0.01$ and a maximum of $5000$ iterations in neural network training. 

\begin{figure}[htp]
    \centering
    \includegraphics[width=0.9\textwidth]{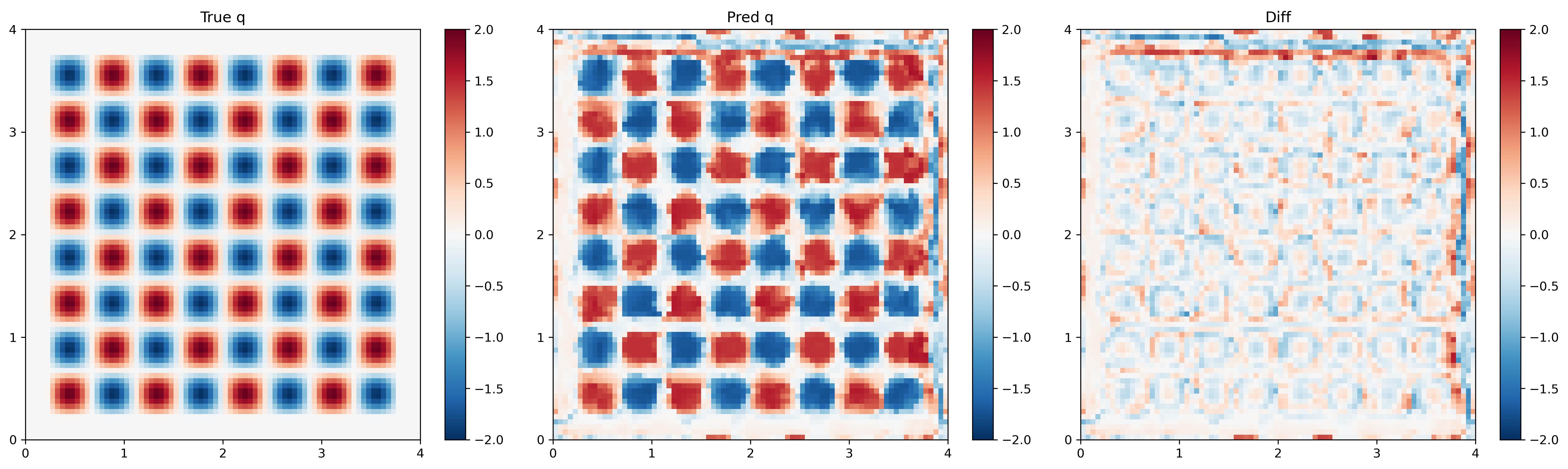}
    \caption{Time-independent case: exact coefficient $q_1$ (left panel); reconstructed neural network approximation (middle panel); absolute error (right panel). }
    \label{fig:q1_comparison}
\end{figure}

For the time-independent case, following \cite{harju2025}, we consider
\begin{equation}\label{tf1}
    q_1(x) = \sin(4\pi k \tilde{x}_1)\sin(4\pi k \tilde{x}_2)\mathbf{1}_{\mathrm{square}}(x_1,x_2), 
\end{equation}
which is an oscillatory function with $\tilde{x}_s = (x_s-2)/4$, $s=1,2$, and $\mathbf{1}_{\mathrm{square}}$ denotes the characteristic function of a square set in $\mathbb{R}^2$. We set $k=4$ to demonstrate the effectiveness of the algorithm and choose the neural network parameters as $(10, 40)$. Figure \ref{fig:q1_comparison} compares the true time-independent coefficient function $q_1$ defined in (\ref{tf1}) with its reconstructed neural network approximation. The algorithm accurately recovers the main structure of the oscillatory pattern, including the position, shape, and size of $q_1$, achieving performance similar to that reported in \cite{harju2025}. It should be noted, however, that due to the choice of a Fourier feature space, the approximation of the sharp jump in the coefficient $q_1$ is not highly accurate.

\begin{figure}
    \centering
    \includegraphics[width=0.9\textwidth]{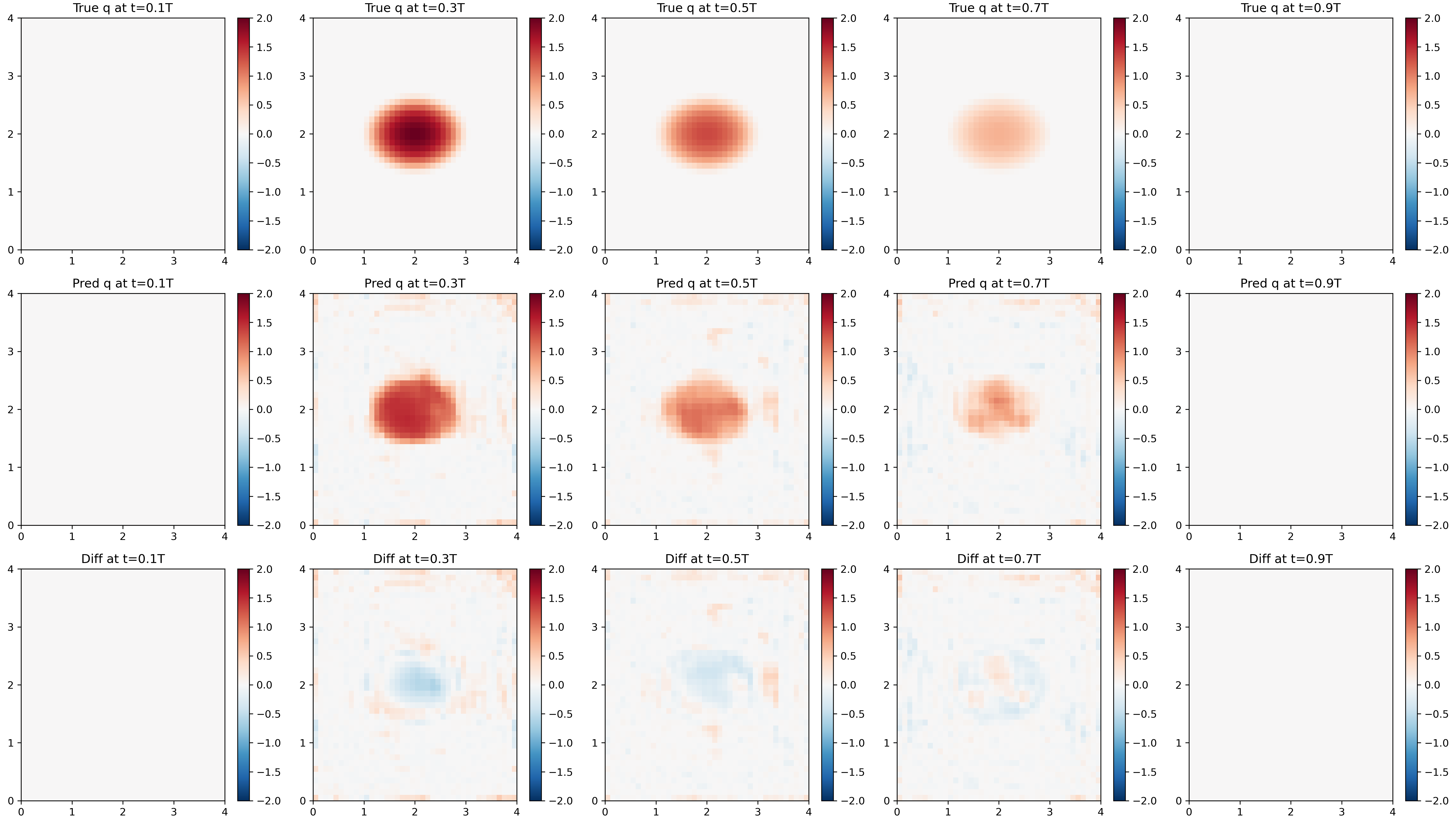}
    \caption{True and reconstructed coefficients for \(q_2\) in case (i). Top row: exact coefficient at different time snapshots. Middle row: neural network approximation. Bottom row: absolute error.
    }
    \label{fig:time_dep_case1}
\end{figure}

\begin{figure}
    \centering
    \includegraphics[width=0.9\textwidth]{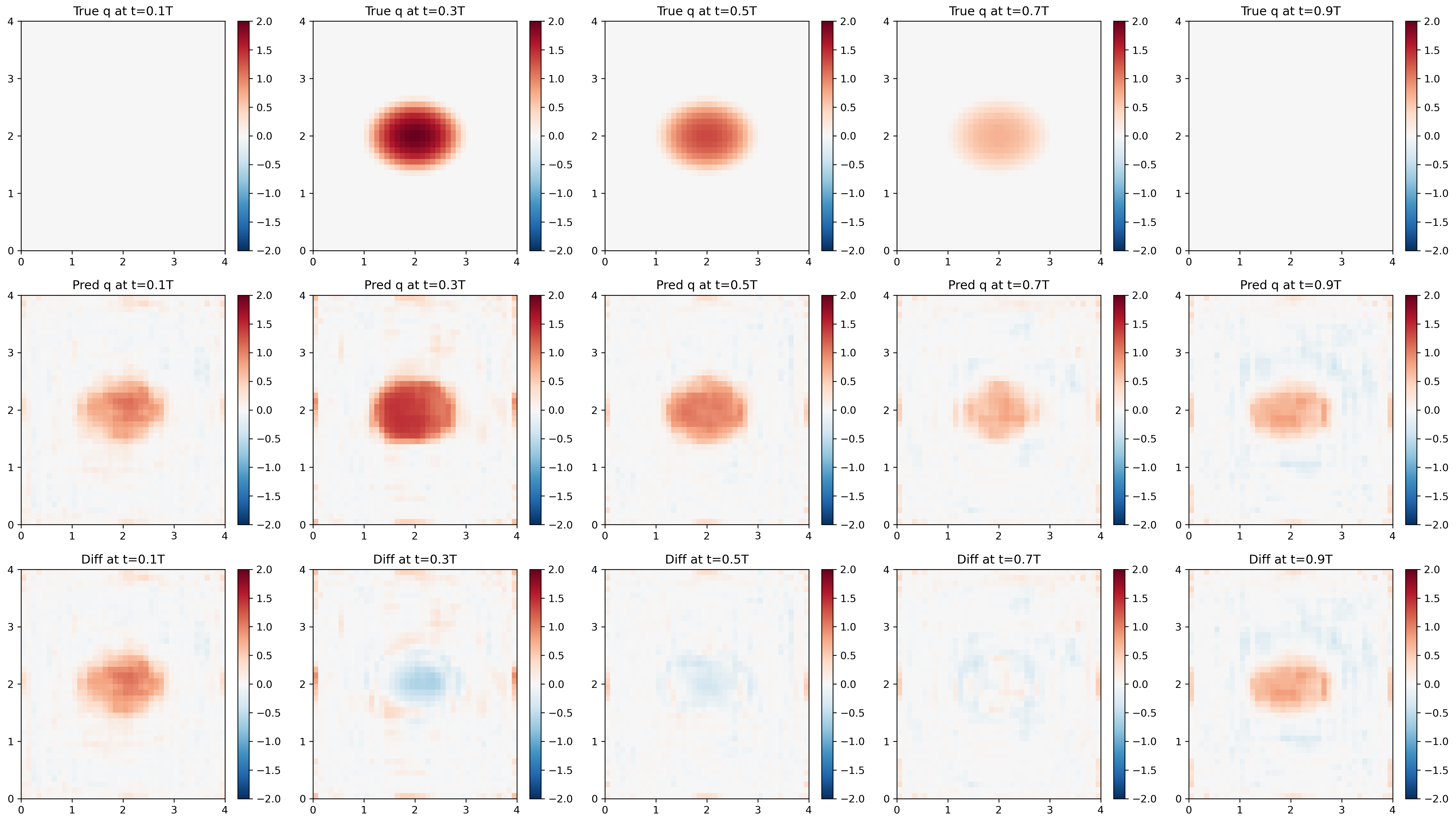}
    \caption{True and reconstructed coefficients for \(q_2\) in case (ii). Top row: exact coefficient at different time snapshots. Middle row: neural network approximation. Bottom row: absolute error.
    }
    \label{fig:time_dep_case2}
\end{figure}

\begin{figure}
    \centering
    \includegraphics[width=0.9\textwidth]{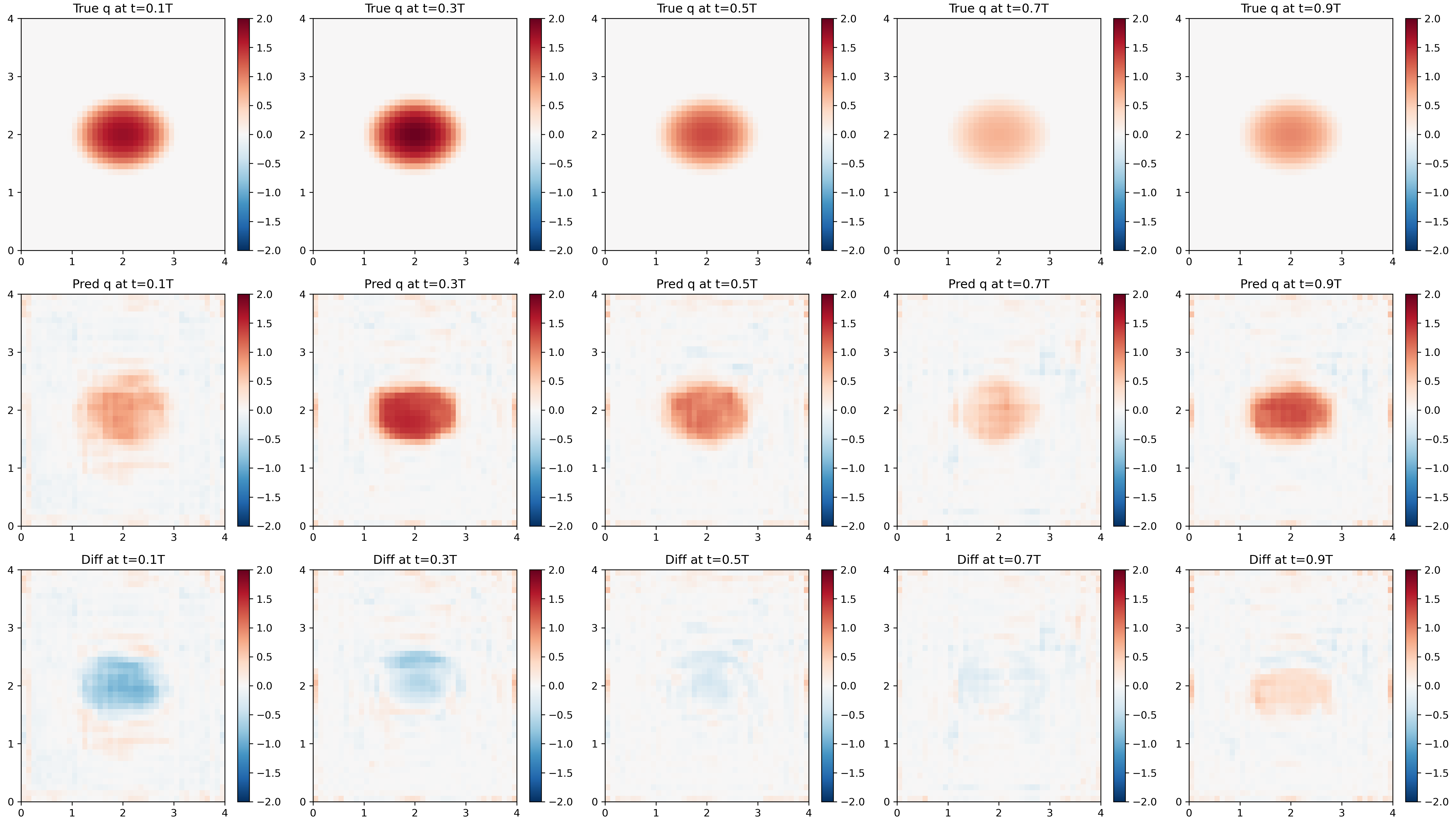}
    \caption{True and reconstructed coefficients for \(q_2\) in case (iii). Top row: exact coefficient at different time snapshots. Middle row: neural network approximation. Bottom row: absolute error.
    }
    \label{fig:time_dep_case3}
\end{figure}

For the time-dependent case, we consider a smooth coefficient function $q_2(t,x)$ of the form
\begin{equation*}
    q_2(t,x) = \varphi_{\mathrm{ellipse}}(x)\bigl(1 + A\sin(\omega t)\bigr)\chi_{\alpha',\beta'}(t),
\end{equation*}
where $\varphi_{\mathrm{ellipse}}$ is a smooth bump function as defined in \cite{harju2025}, given by
\begin{equation*}
    \varphi_{\mathrm{ellipse}}(x) =
    \begin{cases}
        \exp\left(\dfrac{1}{(x_1/a)^2 + (x_2/b)^2 - 1}\right), &  (x_1/a)^2 + (x_2/b)^2 < 1, \\
        0, & \text{otherwise},
    \end{cases}
\end{equation*}
with $A=0.5$, $\omega = 2\pi/T$, and $\chi_{\alpha',\beta'}(t)$ as defined in (\ref{nn_cutoff}). %We note that in \cite{harju2025}, the authors consider a time-independent bump function as the unknown coefficient.  
For the time-dependent smooth coefficient $q_2(t,x)$, we increase the neural network parameters to $(n_{\mathrm{basis}}, d_{\mathrm{hidden}}) = (15, 50)$ and set $n_\mathrm{cross}=3$, as the additional temporal dimension requires more representational capacity. To study the influence of temporal truncation, we consider the following three settings: 
\begin{enumerate}
    \item[(i)] Both the true coefficient $q_2$ and the network output are truncated in time;
    \item[(ii)] Only $q_2$ is truncated, while the neural network approximation is unconstrained;
    \item[(iii)] Neither $q_2$ nor the neural network approximation is truncated.
\end{enumerate}
The results are shown in Figures \ref{fig:time_dep_case1}--\ref{fig:time_dep_case3}. In case (i), with $(\alpha,\beta) = (0.2,0.8)$ and $(\alpha',\beta') = (0.25,0.75)$, the reconstructed coefficient captures both the spatial structure and the temporal variation. In case (ii), with $(\alpha,\beta) = (0,1)$ and $(\alpha',\beta') = (0.25,0.75)$, the neural network approximation produces artifacts outside the region reachable by finite-speed wave propagation. In case (iii), with $(\alpha,\beta) = (\alpha',\beta') = (0,1)$, the reconstruction quality degrades further. These comparisons confirm that temporal truncation of the neural network approximation is necessary for stable time-dependent reconstruction, a requirement induced by the finite speed of wave propagation.

\section*{Acknowledgement}
This work was supported by National Key Research and Development Programs of China (No. 2023YFA1009103), NSFC (No. 92570106) and Science and Technology Commission of Shanghai Municipality (No. 23JC1400501). Both authors would like to thank Jian Zhai (Fudan University) for his careful reading and valuable suggestions, which have significantly improved the paper.

%% 20250703 12:15 修改至此
\appendix  % 开始附录部分
% 重新定义附录章节的格式
\renewcommand{\thesection}{Appendix \Alph{section}}  % 使附录的编号显示为 Appendix A, Appendix B
\section{Auxiliary Results}

\textbf{Proof of Lemma \ref{Lem:4}.}
    For simplicity's sake, we prove the estimate when 
    $y_0=\vec{0}$, since it can be later applied to 
    $b(y+y_0)$ in place of $b(y)$. By the definition of $\chi_\delta(y)$ 
    in (\ref{cutoff}), the integrals
    \begin{equation*}
        \int_{(0,T)\times\Omega'}
        \prod_{j=0}^{m}a_{j0}(y)dy,\qquad
        \int_{(0,T)\times\Omega'}|y|
        \prod_{j=0}^{m}a_{j0}(y)dy
    \end{equation*}
    exist. To ensure that the constant coefficient has a uniform bound, it needs to 
    prove that the integrals have uniform positive 
    lower bounds; thus, to prove that the volume of 
    the support of the integrand has a uniform positive lower bound.
%     It is trivial when $m=2$, thus we only need to prove the case 
%     $m\ge3$.

    According to the construction of the geometric optics, the 
    support of the integrand is the intersection of 
    hypercylinders $C_j$, $j=0,1,2$, when $m=2$, or 
    $j=0,1,2,3$, when $m\ge3$, 
    with axis directions as $\xi_j^\sharp$, and 
    radius $\delta$, restricted to $(0,T)\times\Omega'$, and 
    \begin{equation}\label{thetaij}
        \theta_{i,j}:=\angle(\xi_i^\sharp,\xi_j^\sharp)\ge\sigma_{\min}>0,\ \forall i\ne j.
    \end{equation}
    First, we may assume, without loss of generality, that any
    $\xi_j^\sharp$ is unit in this proof, and that all axes of these hypercylinders 
    intersect at the origin (since translation does not affect 
    volume). We restrict our analysis to the unit ball $B(0,1)\subseteq\mathbb{R}^{1+n}$, 
    as any intersection outside this region can be scaled appropriately.
    By the construction of the geometric optics solutions in (\ref{m3xij}), it is known that for $m=2$, three hypercylinders intersect, while for $m\ge3$, four hypercylinders intersect. Thus,
    we prove that the volume of $K=\bigcap_{j=0}^kC_j\cap B(0,1)$, $k=2$ when 
    $m=2$ and $k=3$ when $m\ge3$,  
    % \textbf{[Q34: Why only consider such cases?]}
    satisfies
    \begin{equation*}
        c_1\delta^{1+n}\le\mathrm{Vol}(K)\le c_2\delta^{1+n},
    \end{equation*}
    where $c_1,c_2>0$ depend only on $\sigma_{\min}$ and $n$.
    %\fix{Change "Chapter 6" to concrete equation. }

    Noticing that there exists a $(1+n)$-dimensional ball-shaped neighborhood, with radius $\delta$, contained in $K$, 
    thus there exists $c_1>0$ such that $\mathrm{Vol}(K)\ge c_1\delta^{1+n}$.
    % \textbf{[Q35: ball of what size?]}. 
    Considering the case that two cylinders intersect, there exists $c>0$ such that $\mathrm{Vol}(K)=c\delta^{1+n}/\sin\theta_{0,1}$,
    % \textbf{[Q36: what does it mean by ``of order"?]}
    due to the angle $\theta_{0,1}$ 
    between the axes of two cylinders. By (\ref{thetaij}), $1/\sin\theta_{0,1}$ 
    has an upper bound. When more cylinders intersect, the 
    intersection will be smaller, so there exists $c_2>0$ such that 
    $\mathrm{Vol}(K)\le c_2\delta^{1+n}$.

    The volume scales as $\delta^{1+n}$, with constants depending only on $\theta_0$ and $n$:
    $$
    \text{Vol}(K) \sim C(\sigma_{\min}, n) \delta^{1+n}.
    $$
    This proves both the lower and upper bounds uniformly for small $\delta$.
    We then write
    \begin{equation*}
        C(n)\delta^{1+n}:=\int_{(0,T)\times\Omega'}
        \prod_{j=0}^{m}a_{j0}(y)dy,\quad
        C''(n)\delta^{1+n}:=\int_{(0,T)\times\Omega'}|y|
        \prod_{j=0}^{m}a_{j0}(y)dy,
    \end{equation*}
    and the estimate% whose order is independent on $\delta$:
    \begin{align*}
        C(n)&=\int_{\mathbb{R}^{1+n}}
        \prod_{j=0}^{m}\left(\chi_1\left(\xi_j\cdot y\right)
        \prod_{l=1}^{n-1}\chi_1\left(w_{jl}\cdot y\right)\right)dy,\\
        C''(n)&=\int_{\mathbb{R}^{1+n}}|y|
        \prod_{j=0}^{m}\left(\chi_1\left(\xi_j\cdot y\right)
        \prod_{l=1}^{n-1}\chi_1\left(w_{jl}\cdot y\right)\right)dy.
    \end{align*}

    Noting that 
    \begin{equation*}
        \left|b(0)-b(\delta y)\right|
        \le\|b\|_{C^1}\delta|y|,\quad \forall y\in
        \mathbb{R}^{1+n}, 
    \end{equation*}
    we immediately deduce
    \begin{align*}
        &\quad \left| b(0) -C(n,0)\delta^{-(n+1)}\int_{(0,T)\times\Omega'}
        b(y)\prod_{j=0}^{m}\left(\chi_\delta\left(\xi_j\cdot y\right)
        \prod_{l=1}^{n-1}\chi_\delta\left(w_{jl}\cdot y\right)\right)dy\right|\\
        % &= \left| b(0) -C(n,0)\int_{(0,T/\delta)\times(\Omega'/\delta)}
        %     b(\delta y)
        %     \prod_{j=0}^{m}\left(\chi_1\left(\xi_j\cdot y\right)
        %     \prod_{l=1}^{n-1}\chi_1\left(w_{jl}\cdot y\right)\right)dy\right| \\
        &= \left| C(n,0)\int_{(0,T/\delta)\times(\Omega'/\delta)}
            \left(b(0) -b(\delta y)\right)
            \prod_{j=0}^{m}\left(\chi_1\left(\xi_j\cdot y\right)
            \prod_{l=1}^{n-1}\chi_1\left(w_{jl}\cdot y\right)\right)dy\right| \\
        % &\le C(n,0)\delta\|b\|_{C^1}
        %     \int_{(0,T/\delta)\times(\Omega'/\delta)}
        %     |y|\prod_{j=0}^{m}\left(\chi_1\left(\xi_j\cdot y\right)
        %     \prod_{l=1}^{n-1}\chi_1\left(w_{jl}\cdot y\right)\right)dy\\
        &\le C'(n)\|b\|_{C^1}\delta.
    \end{align*}
    \qed

% \end{proof-of-lemma4}
% \begin{Lem}
%     Let $G\in C^1((0,T)\times\Omega)$, $y_0=(t_0,x_0)\in
%     (0,T)\times\partial\Omega$ and $\delta>0$. Then there exist constants 
%     $C(n,y_0)>0$ and $C'(n)>0$ such that the following estimate holds:
%     \begin{equation*}
%         \left| \mathcal{L}(G)(y_0,\xi^\sharp) -C(n,y_0)\delta^{-n}\int_{(0,T)\times\Omega}
%         G(y)\prod_{j=0}^{2}\left(\chi_\delta\left(\xi_j\cdot(y-y_0)\right)
%         \prod_{l=1}^{n-1}\chi_\delta\left(w_{jl}\cdot(y-y_0)\right)\right)dy\right|\le
%         C'(n)\|G\|_{C^1}\delta,
%     \end{equation*}
%     where 
%     all $\xi_j$ and $w_{jl}$ are defined as in equations
%     (\ref{12-7}) and (\ref{12-9}). In particular, the integral 
%     on the left-hand side converges uniformly to $\mathcal{L}(G)$ as
%     $\delta\to0^+$.
% \end{Lem}
% \begin{proof-of-lemma5}
\textbf{Proof of Lemma \ref{Lem:5}.}
    Let $\Omega_T':=(-\delta,T+\delta)\times(\Omega+\delta)$, where 
    $\Omega+\delta:=\bigcup_{x\in\Omega}B_{\delta}(x)$. Then for any point 
    $y=(t,x)\in\gamma_{y_0,\xi}\cap(0,T)\times\overline{\Omega}$, we have 
    $B_\delta(y)\subset\Omega_T'$. Define the tubular neighborhood
    % \begin{equation*}
    %     H_\delta:=\bigcup_{y\in\gamma_{y_0,\xi}\cap(0,T)\times\overline{\Omega}}B_\delta(y).
    % \end{equation*}
    \begin{equation*}
        H_\delta:=\left\{ y_0+s\xi^\sharp+z\ |\ y_0+s\xi^\sharp\in(0,T)\times\overline{\Omega},z\in(\xi^\sharp)^\perp,|z|<\delta\right\},
    \end{equation*}
    where $(\xi^\sharp)^\perp$ denotes the orthogonal complement of $\xi^\sharp$ in Euclidean 
    sense.

    Since $\Omega$ is convex and simply connected, 
    the ray $\gamma_{y_0,\xi}\cap\overline{(0,T)\times\Omega}$ is a line 
    segment between two boundary points, denoted as
    $(t_0,x_0)$ and $(t_1,x_1)$. Let us assume this interval is 
    parametrized by \( s \in [0, t_1 - t_0] \), where \( t_1 > t_0 \).
    
    By Lions' extension theorem, there exists an
    extension $F\in C^1(\mathbb{R}^{1+n})$ such that
    \begin{equation*}
        F|_{(0,T)\times\Omega}=G,\quad\text{and}\quad \|F\|_{C^1(\mathbb{R}^{1+n})}\le C\|G\|_{C^1((0,T)\times\Omega)}.
    \end{equation*}

    We write \( y = y_0 + s\xi^\sharp + z \), with 
    \( s = (y - y_0)\cdot \xi^\sharp \), and \( z \in (\xi^\sharp)^\perp \). 
%     Let us denote the integrand as
% \[
% G_{y_0,\xi} :=  
% G(y) \prod_{j=0}^{2}\left( \chi_\delta(\xi_j \cdot (y - y_0))
% \prod_{l=1}^{n-1} \chi_\delta(w_{jl} \cdot (y - y_0))\right).
% \]
    % !!!and $[(0,T)\times\Omega]\cap\mathrm{supp}F=\mathrm{supp}(G_{y_0,\xi})$
    Changing variables \( y \mapsto (s, z) \) gives \( dy = ds \, dz \), 
    and we derive
    \begin{align*}
        \int_{(0,T)\times\Omega}
          &   G(y)\prod_{j=0}^{2}a_{j0}(y-y_0)dy \\
        &\quad =\left(\int_{((0,T)\times\Omega)\cap H_\delta}+\int_{[(0,T)\times\Omega]\cap\mathrm{supp}F\setminus H_\delta}\right)
            G(y)\prod_{j=0}^{2}a_{j0}(y-y_0)dy\\
        &\quad =\left(\int_{H_\delta}-\int_{H_\delta\setminus (0,T)\times\Omega}+\int_{[(0,T)\times\Omega]\cap\mathrm{supp}F\setminus H_\delta}\right)
            F(y)\prod_{j=0}^{2}a_{j0}(y-y_0)dy. 
    \end{align*}
    Now, we write the main term as
    \begin{align*}
        \int_{H_\delta}
            F(y)\prod_{j=0}^{2}a_{j0}(y-y_0)dy
        &=\int_0^{t_1-t_0}\int_{z\in(\xi^\sharp)^\perp}
            F(y_0+s\xi^\sharp+z)\prod_{j=0}^{2}a_{j0}(s\xi^\sharp+z)dzds\\
        % &=\int_0^{t_1-t_0}\int_{z\in(\xi^\sharp)^\perp}
        %     F(y_0+s\xi^\sharp+z)\prod_{j=0}^{2}a_{j0}(z)dzds\\
        &=\int_{z\in(\xi^\sharp)^\perp}\left(\int_0^{t_1-t_0}
            F(y_0+s\xi^\sharp+z)ds\right)\prod_{j=0}^{2}a_{j0}(z)dz\\
        &:=\int_{z\in(\xi^\sharp)^\perp}L(F)(z)\prod_{j=0}^{2}a_{j0}(z)dz. 
    \end{align*}
    Here, we notice that
    $L(F)(0)=\mathcal{L}(F)(y_0,\xi^\sharp)=\mathcal{L}(G)(y_0,\xi^\sharp)$.
    The error terms can be estimated as
    \begin{equation*}
        \left|\int_{H_\delta\setminus (0,T)\times\Omega}
            F(y)\prod_{j=0}^{2}a_{j0}(y-y_0)dy\right|
        \le C\|F\|_{C^1(\mathbb{R}^{1+n})}\cdot\mathrm{vol}(H_\delta\setminus (0,T)\times\Omega)
        \le C\delta^{1+n},
    \end{equation*}
    and
    \begin{align*}
       &  \left|\int_{[(0,T)\times\Omega]\cap\mathrm{supp}F\setminus H_\delta}
            F(y)\prod_{j=0}^{2}a_{j0}(y-y_0)dy\right|  \\
       & \quad  \le C\|F\|_{C^1(\mathbb{R}^{1+n})}\cdot\mathrm{vol}([(0,T)\times\Omega]\cap\mathrm{supp}F\setminus H_\delta)
        \le C\delta^{1+n}.
    \end{align*}
    We know that $(\xi^\sharp)^\perp$ is isomorphic to $\mathbb{R}^n$, so 
    the proof will be similar to that of Lemma \ref{Lem:4} if we perform proper 
    coordinate transform.
    % and show $\|\mathcal{L}(F)(\cdot,\xi)\|_{C^1}$ %that
    %\begin{equation*}
    %    \sup_{y\in\mathbb{R}^{1+n}}\left|\mathcal{L}(G)(y,\xi)\right|
    %    +\sup_{y\in\mathbb{R}^{1+n}}\left|\partial_y\mathcal{L}(G)(y,\xi)\right|
    %    :=L(\xi)
    %\end{equation*}
    % is uniformly bounded in $\xi\in S_1^n$. But this follows, since 
    % $F$ is compactly supported and smooth so that by Fubini's 
    % theorem we can change the order of differentiation and integration. 
    By Lemma \ref{Lem:4}, we know that
    \begin{equation*}
        \left|L(F)(0) -C(n,y_0)\delta^{-n}\cdot
          \int_{H_\delta}
        F(y)\prod_{j=0}^{2}a_{j0}(y-y_0)dy\right|\le C\|F\|_{C^1}\delta.
    \end{equation*}
    Therefore, 
    \begin{align*}
        &\left| \mathcal{L}(G)(y_0,\xi^\sharp) -C(n,y_0)\delta^{-n}\int_{(0,T)\times\Omega}
        G(y)\prod_{j=0}^{2}a_{j0}(y-y_0)dy\right|\\
        &\le\left| \mathcal{L}(G)(y_0,\xi^\sharp)-\int_0^{t_1-t_0}
            F(y_0+s\xi^\sharp)ds\right|\\
        &\quad + \left|L(F)(0) -C(n,y_0)\delta^{-n}\cdot
         \left( \int_{H_\delta}-\int_{H_\delta\setminus (0,T)\times\Omega}+\int_{[(0,T)\times\Omega]\cap\mathrm{supp}F\setminus H_\delta}\right)
        F(y)\prod_{j=0}^{2}a_{j0}(y-y_0)dy\right|.
    \end{align*}
    By Lemma \ref{Lem:4}, we know that the constant $C(n,y_0)$ has uniform 
    positive lower and upper bounds independent on $y_0$,
    and therefore we have completed the proof.

 \qed   
% \end{proof-of-lemma5}

\bibliographystyle{plain}
\bibliography{refs}

\begin{thebibliography}{10}

\bibitem{Aless1}
Giovanni Alessandrini.
\newblock Stable determination of conductivity by boundary measurements.
\newblock {\em Appl. Anal.}, 27(1-3):153--172, 1988.

\bibitem{Alexakis2022}
Spyros Alexakis, Ali Feizmohammadi, and Lauri Oksanen.
\newblock Lorentzian {C}alder\'on problem under curvature bounds.
\newblock {\em Invent. Math.}, 229(1):87--138, 2022.

\bibitem{Alexakis2025}
Spyros Alexakis, Ali Feizmohammadi, and Lauri Oksanen.
\newblock Lorentzian {C}alder\'on problem near the {M}inkowski geometry.
\newblock {\em J. Eur. Math. Soc. (JEMS)}, 27(9):3771--3792, 2025.

\bibitem{Anderson2004}
Michael Anderson, Atsushi Katsuda, Yaroslav Kurylev, Matti Lassas, and Michael
  Taylor.
\newblock Boundary regularity for the {R}icci equation, geometric convergence,
  and {G}el\'fand's inverse boundary problem.
\newblock {\em Invent. Math.}, 158(2):261--321, 2004.

\bibitem{Matti3}
Yernat~M. Assylbekov and Ting Zhou.
\newblock Direct and inverse problems for the nonlinear time-harmonic {M}axwell
  equations in {K}err-type media.
\newblock {\em J. Spectr. Theory}, 11(1):1--38, 2021.

\bibitem{BaoZhang2014}
Gang Bao and Hai Zhang.
\newblock Sensitivity analysis of an inverse problem for the wave equation with
  caustics.
\newblock {\em J. Amer. Math. Soc.}, 27(4):953--981, 2014.

\bibitem{Matti5}
M.~I. Belishev.
\newblock An approach to multidimensional inverse problems for the wave
  equation.
\newblock {\em Dokl. Akad. Nauk SSSR}, 297(3):524--527, 1987.

\bibitem{Matti6}
Michael~I. Belishev and Yaroslav~V. Kurylev.
\newblock To the reconstruction of a {R}iemannian manifold via its spectral
  data ({BC}-method).
\newblock {\em Comm. Partial Differential Equations}, 17(5-6):767--804, 1992.

\bibitem{Matti9}
A.~L. Bukhgeĭm and M.~V. Klibanov.
\newblock Uniqueness in the large of a class of multidimensional inverse
  problems.
\newblock {\em Dokl. Akad. Nauk SSSR}, 260(2):269--272, 1981.

\bibitem{Busch2026}
Leonard Busch, Matti Lassas, Lauri Oksanen, and Mikko Salo.
\newblock On exponential instability of an inverse problem for the wave
  equation.
\newblock {\em Int. Math. Res. Not. IMRN}, (4):Paper No. rnag016, 11, 2026.

\bibitem{Matti11}
C\u at\u alin~I. C\^arstea, Gen Nakamura, and Manmohan Vashisth.
\newblock Reconstruction for the coefficients of a quasilinear elliptic partial
  differential equation.
\newblock {\em Appl. Math. Lett.}, 98:121--127, 2019.

\bibitem{Matti12}
Xi~Chen, Matti Lassas, Lauri Oksanen, and Gabriel Paternain.
\newblock Detection of {H}ermitian connections in wave equations with cubic
  non-linearity.
\newblock {\em J. Eur. Math. Soc. (JEMS)}, 24(7):2191--2232, 2022.

\bibitem{Lassas14}
Yvonne Choquet-Bruhat.
\newblock {\em General relativity and the {E}instein equations}.
\newblock Oxford Mathematical Monographs. Oxford University Press, Oxford,
  2009.

\bibitem{Zhai8}
Constantine~M. Dafermos and William~J. Hrusa.
\newblock Energy methods for quasilinear hyperbolic initial-boundary value
  problems. {A}pplications to elastodynamics.
\newblock {\em Arch. Rational Mech. Anal.}, 87(3):267--292, 1985.

\bibitem{deHoopetal.2018}
Maarten~V. de~Hoop, Paul Kepley, and Lauri Oksanen.
\newblock Recovery of a smooth metric via wave field and coordinate
  transformation reconstruction.
\newblock {\em SIAM J. Appl. Math.}, 78(4):1931--1953, 2018.

\bibitem{Eskin2007}
G.~Eskin.
\newblock Inverse hyperbolic problems with time-dependent coefficients.
\newblock {\em Comm. Partial Differential Equations}, 32(10-12):1737--1758,
  2007.

\bibitem{Feizmohammadietal.2021}
Ali Feizmohammadi, Joonas Ilmavirta, Yavar Kian, and Lauri Oksanen.
\newblock Recovery of time-dependent coefficients from boundary data for
  hyperbolic equations.
\newblock {\em J. Spectr. Theory}, 11(3):1107--1143, 2021.

\bibitem{AliLauri}
Ali Feizmohammadi and Lauri Oksanen.
\newblock Recovery of zeroth order coefficients in non-linear wave equations.
\newblock {\em J. Inst. Math. Jussieu}, 21(2):367--393, 2022.

\bibitem{harju2025}
Markus Harju, Suvi Anttila, and Teemu Tyni.
\newblock X-ray imaging from nonlinear waves: numerical reconstruction of a
  cubic nonlinearity, 2025.

\bibitem{Isakov1993}
V.~Isakov.
\newblock On uniqueness in inverse problems for semilinear parabolic equations.
\newblock {\em Arch. Rational Mech. Anal.}, 124(1):1--12, 1993.

\bibitem{IsakovSylvester1994}
Victor Isakov and John Sylvester.
\newblock Global uniqueness for a semilinear elliptic inverse problem.
\newblock {\em Comm. Pure Appl. Math.}, 47(10):1403--1410, 1994.

\bibitem{Kianetal.2019}
Yavar Kian, Yaroslav Kurylev, Matti Lassas, and Lauri Oksanen.
\newblock Unique recovery of lower order coefficients for hyperbolic equations
  from data on disjoint sets.
\newblock {\em J. Differential Equations}, 267(4):2210--2238, 2019.

\bibitem{KrupchykandUhlmann2020a}
Katya Krupchyk and Gunther Uhlmann.
\newblock Partial data inverse problems for semilinear elliptic equations with
  gradient nonlinearities.
\newblock {\em Math. Res. Lett.}, 27(6):1801--1824, 2020.

\bibitem{AliLauri14}
Yaroslav Kurylev, Matti Lassas, Lauri Oksanen, and Gunther Uhlmann.
\newblock Inverse problem for {E}instein-scalar field equations.
\newblock {\em Duke Math. J.}, 171(16):3215--3282, 2022.

\bibitem{Matti37}
Yaroslav Kurylev, Matti Lassas, and Gunther Uhlmann.
\newblock Inverse problems for {L}orentzian manifolds and non-linear hyperbolic
  equations.
\newblock {\em Invent. Math.}, 212(3):781--857, 2018.

\bibitem{Kurylevetal.2018b}
Yaroslav Kurylev, Lauri Oksanen, and Gabriel~P. Paternain.
\newblock Inverse problems for the connection {L}aplacian.
\newblock {\em J. Differential Geom.}, 110(3):457--494, 2018.

\bibitem{LaiLin2019}
Ru-Yu Lai and Yi-Hsuan Lin.
\newblock Global uniqueness for the fractional semilinear {S}chr\"odinger
  equation.
\newblock {\em Proc. Amer. Math. Soc.}, 147(3):1189--1199, 2019.

\bibitem{Lassas2018}
Matti Lassas.
\newblock Inverse problems for linear and non-linear hyperbolic equations.
\newblock In {\em Proceedings of the {I}nternational {C}ongress of
  {M}athematicians---{R}io de {J}aneiro 2018. {V}ol. {IV}. {I}nvited lectures},
  pages 3751--3771. World Sci. Publ., Hackensack, NJ, 2018.

\bibitem{LassasP8}
Matti Lassas, Tony Liimatainen, Leyter Potenciano-Machado, and Teemu Tyni.
\newblock Uniqueness, reconstruction and stability for an inverse problem of a
  semi-linear wave equation.
\newblock {\em J. Differential Equations}, 337:395--435, 2022.

\bibitem{MattiLorentzian}
Matti Lassas, Tony Liimatainen, Leyter Potenciano-Machado, and Teemu Tyni.
\newblock Stability and {L}orentzian geometry for an inverse problem of a
  semilinear wave equation.
\newblock {\em Anal. PDE}, 18(5):1065--1118, 2025.

\bibitem{lightray}
Matti Lassas, Lauri Oksanen, Plamen Stefanov, and Gunther Uhlmann.
\newblock The light ray transform on {L}orentzian manifolds.
\newblock {\em Comm. Math. Phys.}, 377(2):1349--1379, 2020.

\bibitem{AliLauri18}
Matti Lassas, Gunther Uhlmann, and Yiran Wang.
\newblock Determination of vacuum space-times from the einstein-maxwell
  equations, 2017.

\bibitem{LuSaloXu}
Shuai Lu, Mikko Salo, and Boxi Xu.
\newblock Increasing stability in the linearized inverse {S}chr\"odinger
  potential problem with power type nonlinearities.
\newblock {\em Inverse Problems}, 38(6):Paper No. 065009, 25, 2022.

\bibitem{MaSalo}
Shiqi Ma, Suman~Kumar Sahoo, and Mikko Salo.
\newblock The anisotropic {C}alder\'on problem at large fixed frequency on
  manifolds with invertible ray transform.
\newblock {\em J. Lond. Math. Soc. (2)}, 110(4):Paper No. e13006, 35, 2024.

\bibitem{Oksanenetal.2024}
Lauri Oksanen, Mikko Salo, Plamen Stefanov, and Gunther Uhlmann.
\newblock Inverse problems for real principal type operators.
\newblock {\em Amer. J. Math.}, 146(1):161--240, 2024.

\bibitem{Paternain}
Gabriel~P. Paternain, Mikko Salo, and Gunther Uhlmann.
\newblock {\em Geometric inverse problems---with emphasis on two dimensions},
  volume 204 of {\em Cambridge Studies in Advanced Mathematics}.
\newblock Cambridge University Press, Cambridge, 2023.
\newblock With a foreword by Andr\'as Vasy.

\bibitem{SaloZhong2012}
Mikko Salo and Xiao Zhong.
\newblock An inverse problem for the {$p$}-{L}aplacian: boundary determination.
\newblock {\em SIAM J. Math. Anal.}, 44(4):2474--2495, 2012.

\bibitem{Stefanov1989}
P.~D. Stefanov.
\newblock Inverse scattering problem for the wave equation with time-dependent
  potential.
\newblock {\em J. Math. Anal. Appl.}, 140(2):351--362, 1989.

\bibitem{StefanovYang}
Plamen Stefanov and Yang Yang.
\newblock The inverse problem for the {D}irichlet-to-{N}eumann map on
  {L}orentzian manifolds.
\newblock {\em Anal. PDE}, 11(6):1381--1414, 2018.

\bibitem{Sun2010}
Ziqi Sun.
\newblock An inverse boundary-value problem for semilinear elliptic equations.
\newblock {\em Electron. J. Differential Equations}, pages No. 37, 5, 2010.

\bibitem{SunandUhlmann1997}
Ziqi Sun and Gunther Uhlmann.
\newblock Inverse problems in quasilinear anisotropic media.
\newblock {\em Amer. J. Math.}, 119(4):771--797, 1997.

\bibitem{Taylor11}
Michael~E. Taylor.
\newblock {\em Partial differential equations {I}. {B}asic theory}, volume 115
  of {\em Applied Mathematical Sciences}.
\newblock Springer, New York, second edition, 2011.

\bibitem{AliLauri24}
Gunther Uhlmann and Yiran Wang.
\newblock Determination of space-time structures from gravitational
  perturbations.
\newblock {\em Comm. Pure Appl. Math.}, 73(6):1315--1367, 2020.

\bibitem{WangandZhou2019}
Yiran Wang and Ting Zhou.
\newblock Inverse problems for quadratic derivative nonlinear wave equations.
\newblock {\em Comm. Partial Differential Equations}, 44(11):1140--1158, 2019.

\bibitem{Zou}
Sen Zou, Shuai Lu, and Boxi Xu.
\newblock Increasing stability of the first order linearized inverse
  {S}chr\"odinger potential problem with integer power type nonlinearities.
\newblock {\em SIAM J. Appl. Math.}, 84(4):1868--1889, 2024.

\end{thebibliography}

\end{document}